\numberwithin{equation}{section}
\DeclareMathOperator{\sgn}{sgn}
\DeclareMathOperator{\Id}{Id}
\DeclareMathOperator{\argmax}{argmax}
\DeclareMathOperator{\Mean}{\mathbf{E}}
\DeclareMathOperator{\M}{\mathbf{E}}
\DeclareMathOperator{\Prb}{\mathbf{P}}
\DeclareMathOperator{\Law}{Law}
\newtheorem{thm}{Theorem}[section]
\newtheorem{prop}[thm]{Proposition}
\newtheorem{cor}[thm]{Corollary}
\newtheorem{rem}{Remark}[section]
\newtheorem{ex}{Example}[section]
\newtheorem{defn}{Definition}[section]
\newtheorem{exmpl}{Example}[section]
\def\eps{\varepsilon}
\def\a{\alpha}
\def\b{\beta}
\def\de{\partial}
\def\d{\delta}
\def\g{\gamma}
\def\l{\lambda}
\def\s{\sigma}
  \newcommand{\N}{{\mathbb N}}
\newcommand{\Z}{{\mathbb Z}}
\newcommand{\R}{{\mathbb R}}
\newcommand{\T}{{\mathbb T}}
\begin{document}
\date{July 10, 2017; revised February 8, 2018}

\title[]{On non-uniqueness and uniqueness of solutions in finite-horizon Mean Field Games}
\author[
]{Martino Bardi 
 {and} Markus Fischer
 }
 \address{Department of Mathematics, University of Padova, Via Trieste 63, 35121 Padova, Italy} \email{bardi@math.unipd.it, fischer@math.unipd.it}
 \thanks{
The authors are partially supported by the research projects  ``Mean-Field Games and Nonlinear PDEs'' of the University of Padova, and  ``Nonlinear Partial Differential Equations: Asymptotic Problems and Mean-Field Games" of the Fondazione CaRiPaRo. They   are also members of the Gruppo Nazionale per l'Analisi Matematica, la Probabilit{\`a} e le loro Applicazioni (GNAMPA) of the Istituto Nazionale di Alta Matematica (INdAM)
}
%
\begin{abstract}
This paper presents a class of evolutive Mean Field Games with multiple solutions for all time horizons $T$ and convex but non-smooth Hamiltonian $H$, as well as for smooth $H$ and  $T$ large enough. The phenomenon is analysed in both the PDE and the probabilistic setting. The examples are compared with the current theory about uniqueness of solutions.  In particular, a new result on uniqueness for the MFG PDEs with small data, e.g., small $T$, is proved. Some results are also extended to MFGs with two populations.
\end{abstract}
\subjclass{}
\keywords{Mean Field Games, finite horizon, non-uniqueness of solutions, uniqueness of solutions, multipopulation MFG}
\maketitle
\tableofcontents

\section{Introduction}
In this paper, we study the existence of multiple solutions of Mean Field Games (briefly, MFGs) with finite horizon and of the evolutive system of PDEs associated to them, and compare the results to some uniqueness theorems. The systems of PDEs we consider are backward-forward parabolic and have the form
\begin{equation}\label{mfg-gen}
\left\{
\begin{array}{lll}
-v_t + H(Dv) = \frac 12 \s^2(x)\Delta v
+ F(x, m(t,\cdot))  \; 
 \text{ in  }(0, T)\times \R^d, 
 & v(T,x)=G(x, m(T))) ,\\ \\
m_t - \text{div}(
DH(Dv) m) = \frac 12 \Delta (\s^2(x)m)
  \quad 
\text{ in  }(0, T)\times \R^d, 
& m(0,x)= \nu (x) ,
 \end{array}
\right.\,
\end{equation}
in the unknowns $(v, m)$,  where $m(t,\cdot)$ is a probability density for each $t$, and the given running and terminal costs $F, G$ map a subset of
$\R^d\times{ \mathcal{P}}
(\R^d)$ into $\R$. In the probabilistic formulation of a Mean Field Game, the solution is, instead, a pair $(u, m)$ with $m$ as above and $u$ either an open-loop or a feedback control satisfying an optimality and a mean field condition, see the precise definitions in Section \ref{SectProbMFG}.

There are two regimes under which there is uniqueness of a classical solution to \eqref{mfg-gen}. The first and best known is the case of $H$ convex and $F, G$ increasing with respect to $m$ in the $L^1$ sense, which means that imitating the other agents is costly, see \cite{LL, Car} for the PDE proof and \cite{ahuja16, carmonaetal16} for probabilistic proofs, which also cover MFGs with a common noise. The second regime is for the length $T$ of the time horizon short enough and $H$ smooth: it was presented in a lecture of Lions on January 9th, 2009 
\cite{L}, but to our knowledge it does not appear 
 in print anywhere.

The main results in the first part of the paper give counterexamples to uniqueness when either one of these two regimes is violated. More precisely,
we describe 
an explicit 
class of problems in dimension $d=1$, with costs $F$, $G$ describing a mild preference for imitating the other agents, $H$ convex and not differentiable 
at one point, where multiple classical solutions exist for all $T>0$. We also provide a variant with smooth $H$ and $T$ larger than a certain threshold. This is done for the PDE problem \eqref{mfg-gen} in Section \ref{section pde}, and for its probabilistic formulation in Section \ref{SectProb} under somewhat weaker assumptions (e.g., the volatility $\s$ may vanish).

Explicit examples of finite horizon MFGs with multiple solutions are rare. In the probabilistic literature, based on the FBSDE approach to mean field games, examples of MFGs with multiple solutions that fall into the class of problems considered here are given in Section~5.1 of Carmona, Delarue, Lachapelle \cite{carmonaetal13} and Section~4.2 of Tchuendom \cite{tchuendom16}, where the author shows that adding a common noise restores uniqueness. The ``illuminating example'' in Lacker \cite[Section~3.3]{lacker16} 
also provides an MFG with explicitly known multiple solutions; cf.\ Example~\ref{ExmplThreeSol} below.

The second part of the paper is devoted to comparing the previous non-uniqueness examples with the assumptions of some uniqueness results. 
We argue that the monotone regime is close to being sharp, since, for instance, for costs of the form
       \begin{equation*}
   F(x,\mu)= \a x 
   \int y \,d\mu(y) + f(\mu) , \quad   G(x,\mu)= \beta x  \int y \,d\mu(y) 
    + g(\mu) ,
         \end{equation*}  
with $f, g$ 
 continuous, the costs are 
 increasing in a suitable sense and uniqueness holds if 
 \[        \a > 0 , \quad \beta \geq 0 ,
 \]
 whereas there are multiple solutions if 
  \[        \a \leq 0 , \quad \beta < 0. \]
We recall that the need of a monotonicity condition for having uniqueness for all $T>0$ and $T\to\infty$ was discussed at length in \cite{L}. 
Some  explicit counterexamples, very different from ours, were shown recently by Briani and Cardaliaguet \cite{BriCar} and Cirant and Tonon \cite{CT}. An interesting analysis  of multiple oscillating solutions via bifurcations was done very recently by Cirant \cite{Cir2018}. For stationary MFG with ergodic cost functional, examples of non-uniqueness are known since the pioneering paper of Lasry and Lions \cite{LL}, and others more explicit appear in \cite{Gu, B, BP, GNP}.

For the short horizon regime we prove a uniqueness theorem inspired by Lions \cite{L} but under weaker assumptions, different boundary and terminal conditions, and with estimates in different function spaces (see Remark \ref{plions} for more details on the differences with \cite{L}). 
It does not require any convexity of $H$ nor monotonicity of $F$ and $G$.
Our example of non-uniqueness fails to satisfy more than one assumption of this theorem, but the crucial one seems to be the smoothness of the Hamiltonian (at least $C^{1,1}$). Some remarkable points of the proof of the uniqueness theorem are the following:

\noindent - it is largely self-contained and elementary, since the main estimates are got by energy methods;

\noindent - it shows that uniqueness holds also for any $T$, provided some other data are sufficiently small, such as the Lipschitz constant of $DH$, or the  Lipschitz constants of $F$ and $D_xG$ with respect to the density $m$ (Remarks \ref{C11} and \ref{C12});

\noindent - it incorporates an a-priori estimate of $\|m(t,\cdot)\|_\infty$ that is proved in the Appendix by a probabilistic argument. 

\noindent Moreover, it appears flexible enough to be applied in different settings, e.g., Neumann boundary conditions and several populations, see \cite{BC}. 
A different proof of uniqueness for a particular economic model under a smallness assumption on a parameter is in \cite{GraBen}.
More references to papers with related results are in Remark \ref{rmk:refs}.

Finally, Section \ref{two populations} extends some of the preceding results to Mean Field Games describing two populations of homogeneous agents, where the PDE system involves two Hamilton-Jacobi-Bellman and two Kolmogorov-Fokker-Planck equations instead of one.  In this case the monotonicity conditions are more restrictive: in addition to a form of aversion to crowd in each population, they require that the costs of intraspecific interactions are larger than the costs of the interactions between the two populations (cfr. \cite{Cir}). Therefore here there is more distance between the sufficient conditions for uniqueness and the examples of multiple solutions. We refer to \cite{BF, ABC} for motivations to the study of multipopulations MFGs, to \cite{Cir, ABC} for examples of non-uniqueness in the case of ergodic costs and stationary PDEs, and to \cite{BC} for uniqueness when the horizon is short.

We end this introduction with some additional bibliographical remarks. The theory of MFGs started with the independent work of Lasry and Lions \cite{LL} and Huang, Caines, and Malhame \cite{HCM:07ieee, HCM:06, HCM:07jssc}. It aims at modeling at a macroscopic level non-cooperative stochastic differential games with a very large number $N$  of identical players.  The rigorous justification of the PDE 
\eqref{mfg-gen} 
as limit of the systems of Bellman equations for such games as $N\to\infty$  was proved recently by Cardaliaguet, Delarue, Lasry, Lions \cite{CDLL} using in a crucial way the convexity and monotonicity conditions leading to uniqueness for  \eqref{mfg-gen}. General presentations of the field are \cite{Car, GS, GomesBook}, and many applications are analysed 
in \cite{GLL, GomesBookEcon}.
For the probabilistic approach to MFGs, in addition to the above mentioned works \cite{carmonaetal13, ahuja16, carmonaetal16, lacker16, tchuendom16}, we refer to Carmona and Delarue \cite{carmonadelarue13}, Lacker \cite{lacker15}, and Fischer \cite{Fish}.

\section{Multiple solutions of the MFG system of PDEs}
\label{section pde}
In this section we consider the backward-forward system of parabolic PDEs
\begin{equation}\label{pde}
\left\{
\begin{array}{lll}
-v_t + H(v_x) = \frac 12 \s^2(x) v_{xx} +F(x, m(t,\cdot)) , 
&
 \text{ in  }(0, T)\times \R, 
\\  v(T,x)=G(x, m(T)) ,\\ \\
m_t - (
H' (v_x) m)_x = \frac 12 (\s^2(x) m)_{xx} 
\ & 
\text{ in  }(0, T)\times \R,
\\ m(0,x)=\nu(x) ,  \quad m>0 ,  \quad \int_\R m(t,x) dx = 1 \quad\forall t>0 .
 \end{array}
\right.\,
\end{equation}
We will look for solutions such that $m(t, \cdot)$ is the density of a probability measure with finite first moment, that we denote as follows
\[
\tilde{ \mathcal{P}}_{1}(\R) := \{\mu\in L^\infty(\R) : \mu\geq 0,\; \int_\R \mu(x) dx = 1, \;  \int_{\R} |y|\, \mu(y) \,dy < + \infty\},
\]
and assume that the initial density $\nu$ is in $\tilde{ \mathcal{P}}_{1}(\R)$.
For such functions we denote 
 the mean with
$$ M(\mu):= \int_{\R} y\, \mu(y) \,dy .
$$
We recall that the Monge-Kantorovich distance between probability measures is
\[
d_1(\mu,\nu)= \sup \left\{\int_\R \phi(y) (\mu - \nu)(y) \,dy : \phi : \R\to\R \; 1-\text{Lipschitz} \right\}
\] 
and that $M(\cdot)$ is continuous for such distance.
The running and terminal costs
\[
F : \R\times \tilde{ \mathcal{P}}_{1}(\R) \to \R , \quad G : \R\times \tilde{ \mathcal{P}}_{1}(\R) \to \R
\]
satisfy the following regularity conditions 
\begin{description}
\item [(F1)] 
$\mu\mapsto F(x,\mu)$ is continuous for the distance $d_1$ locally uniformly in $x$; for all $\mu$ the function $x\mapsto F(x,\mu)$ is differentiable  with derivative denoted by $D_xF(\cdot, \mu)$, and for some $\a\in (0,1], k\in \N,$ there is $C_R$ such that
\[
|F(x,\mu) - F(y,\mu)|\leq C_R|x-y|^\a ,  \quad |D_xF(x,\mu) - D_xF(y,\mu)|\leq C_R|x-y|^\a
\]
for all $|x|, |y| \leq R$, $M(\mu)\leq R$, and
\[
\quad |F(x,\mu)|\leq C_R(1+|x|^k), \quad \forall\, x\in\R , M(\mu)\leq R ;
\]
\item [(G1)]  for all $\mu$ $x\mapsto G(x,\mu)$ has at most polynomial growth and it is differentiable  with continuous derivative denoted by $D_xG(\cdot, \mu)$. 
\end{description}
The main qualitative assumption on the costs  that allows us to build multiple solutions is the following.
\begin{description}
\item [(FG2)] 
for all $x\in\R$ and $\mu\in\tilde{ \mathcal{P}}_{1}(\R)$
\begin{equation}
\label{sign}
M(\mu) D_xF(x,\mu) \leq 0 , \quad M(\mu) D_xG(x,\mu) \leq 0 ,
\end{equation}
and for $M(\mu)\ne0$ either  $D_xF(\cdot, \mu)\not\equiv 0$ or  $D_xG(\cdot, \mu)\not\equiv 0$.
\end{description}
The meaning of this 
condition is that it is less costly to move to the right if $M(\mu)>0$ and to move to the left if $M(\mu)<0$, so in some sense it is rewarding for a representative agent to imitate the behaviour of the entire population. This is consistent with the known fact that aversion to crowd is related to the monotonicity conditions of Lasry and Lions that imply uniqueness of the solution \cite{LL, GLL}.
\begin{ex}
\label{ese1}
\upshape
Consider a running cost $F$ of the form
\[
F(x, \mu)= f_1\left(x, \int_{\R^d} k
(x,y) \mu(y) dy\right) f_2(M(\mu)) + f_3(\mu) 
\]
with $f_1, k_1\in C_1(\R^2)$ Lipschitz with Lipschitz derivatives, $f_2\in C(\R)$,  and $f_3$ $d_1$-continuous. Then (F1) holds. Next assume
\[
rf_2(r) \geq 0  \quad \forall\, r\in\R, \quad f_2(r)\ne 0 \quad\forall\, r\ne 0 ,
\]
\[
\frac {\de f_1}{\de x}(x, r)\leq 0, \quad \text{sign} \frac {\de f_1}{\de r}(x, r)= -\text{sign} \frac {\de k}{\de x}(x,y) \quad\forall\, x,y,r .
\]
Then the condition \eqref{sign}   on $D_xF=\left(\frac {\de f_1}{\de x} +  \frac {\de f_1}{\de r}
\int_\R \frac {\de k}{\de x}
\mu \,dy  \right) f_2(M(\mu))$ in (FG2) is satisfied and $D_xF(\cdot, \mu)\not\equiv 0$ if in addition either $\frac {\de f_1}{\de x}\ne0$ or both $\frac {\de f_1}{\de r}\ne 0$ and $ \frac {\de k}{\de x}\ne 0$. 
Similar assumptions can be made on $G$.
\end{ex}
About the diffusion coefficient $\s$ we will assume
 \begin{equation}
\label{sigma}
\s : \R \to \R \quad\text{Lipschitz }
 , \qquad \frac 12\s^2(x)\geq \s_o>0 \quad\forall \, x\in \R .
\end{equation}
%
%
\subsection{Non-uniqueness for any time horizon}
In this section we consider the Hamiltonian
 \begin{equation}
\label{Ha}	
H(p) := \max_{a\leq \g \leq b}\{-p\g\} =
\left\{
\begin{array}{ll} 
-bp &\text{ if } p\leq 0, \\ -ap &\text{ if } p\geq 0 ,\end{array}
\right.\,
\end{equation}
so that
\[
H'(p) = -b \text{ if } p<0 , \quad H'(p) = -a \text{ if } p>0 .
\]
 \begin{thm} 
 \label{T1}
 Assume {\rm(F1), 
 (G1), 
 (FG2)}, \eqref{sigma}, and that  $H$ is given by  \eqref{Ha} with $a<0<b$. Then, for all $\nu$ with $M(\nu)=0$, there are two classical solutions $(v_1, m_1), (v_2, m_2)$  of \eqref{pde} such that $(v_1)_x(t,x)<0$ and $(v_2)_x(t,x)>0$ for all $t<T$, $M(m_1(t,\cdot))=bt$ and $M(m_2(t,\cdot))=at$ for all $t$. 
\end{thm}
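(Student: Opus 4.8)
The plan is to look for solutions in which $v_x$ keeps a constant sign. Indeed, if one posits $v_x<0$ throughout $(0,T)\times\R$ then, by \eqref{Ha}, $H(v_x)=-b\,v_x$ and $H'(v_x)=-b$, so \eqref{pde} reduces to the \emph{linear} backward--forward system
\[
-v_t-b\,v_x=\frac12\s^2(x)v_{xx}+F(x,m(t,\cdot)),\qquad m_t+b\,m_x=\frac12(\s^2(x)m)_{xx},
\]
with the given terminal/initial data; positing instead $v_x>0$ replaces $b$ by $a$ everywhere. The key point is that the Kolmogorov--Fokker--Planck equation now decouples completely from $v$, so the system can be solved in two steps. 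I construct $(v_1,m_1)$ with $(v_1)_x<0$; the pair $(v_2,m_2)$ is obtained by the same argument with $a$ in place of $b$.

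First I would solve the linear forward equation $m_t+b\,m_x=\frac12(\s^2(x)m)_{xx}$, $m(0,\cdot)=\nu$. By \eqref{sigma} it is uniformly parabolic and its solution $m_1$ is the law of the diffusion $dX_t=b\,dt+\s(X_t)\,dW_t$ with $\Law(X_0)=\nu$; standard regularity for non-degenerate Fokker--Planck equations (together with the a priori $L^\infty$ estimate on the density mentioned in the Introduction and Aronson-type lower bounds) gives $m_1(t,\cdot)\in L^\infty(\R)$ and $m_1(t,\cdot)>0$, while finiteness of the first moment of $\nu$ and the at most linear growth of $\s$ give $m_1(t,\cdot)\in\tilde{\mathcal{P}}_{1}(\R)$ with $t\mapsto m_1(t,\cdot)$ continuous for $d_1$. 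Testing the equation against $x$ yields $M(m_1(t,\cdot))=M(\nu)+bt=bt$, and the analogous construction produces $m_2$ with $M(m_2(t,\cdot))=at<0$.

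Inserting $m_1$ into the Hamilton--Jacobi--Bellman equation makes it the linear backward problem $-v_t-b\,v_x=\frac12\s^2(x)v_{xx}+F(x,m_1(t,\cdot))$, $v(T,x)=G(x,m_1(T))$, which under (F1), (G1), \eqref{sigma} has a classical solution $v_1$ (for instance via the Feynman--Kac formula $v_1(t,x)=\M_{t,x}[\,G(Z_T,m_1(T))+\int_t^T F(Z_s,m_1(s,\cdot))\,ds\,]$ along $dZ_s=b\,ds+\s(Z_s)\,dW_s$) with the growth needed to make \eqref{pde} meaningful. It remains to prove $(v_1)_x<0$ on $[0,T)\times\R$, which is the crux of the argument. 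Differentiating the equation in $x$ shows that $w:=(v_1)_x$ solves, in the weak sense, the divergence-form linear parabolic equation $-w_t-b\,w_x=\frac12(\s^2(x)w_x)_x+D_xF(x,m_1(t,\cdot))$ with terminal datum $w(T,\cdot)=D_xG(\cdot,m_1(T))$. Since $M(m_1(t,\cdot))=bt\ge0$ for $t\ge0$, assumption (FG2) forces $D_xF(x,m_1(t,\cdot))\le0$ for all $x,t$ and $D_xG(x,m_1(T))\le0$; hence both source and terminal datum are $\le0$ and the maximum principle gives $w\le0$. If $w$ vanished at some $(t_0,x_0)$ with $t_0<T$, the strong maximum principle would force $w\equiv0$ on $[t_0,T]\times\R$, whence, reading off the equation, $D_xF(\cdot,m_1(t,\cdot))\equiv0$ for $t\in(t_0,T)$ and $D_xG(\cdot,m_1(T))\equiv0$; but $M(m_1(t,\cdot))=bt\ne0$ there, contradicting the last clause of (FG2). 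Therefore $w<0$ for $t<T$. Running the same computation for $v_2$, now $M(m_2(t,\cdot))=at\le0$ makes $D_xF(x,m_2(t,\cdot))\ge0$ and $D_xG(x,m_2(T))\ge0$, so $(v_2)_x>0$ for $t<T$.

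Finally, on $\{t<T\}$ we have $(v_1)_x<0$, so $H((v_1)_x)=-b(v_1)_x$ and $H'((v_1)_x)=-b$ there; hence the linear system solved by $(v_1,m_1)$ coincides with \eqref{pde}, so $(v_1,m_1)$ is a classical solution of \eqref{pde} with $m_1(t,\cdot)\in\tilde{\mathcal{P}}_{1}(\R)$, and likewise $(v_2,m_2)$. They are distinct because $(v_1)_x<0<(v_2)_x$ and $M(m_1(t,\cdot))=bt\ne at=M(m_2(t,\cdot))$ for $t>0$. The delicate point is the sign analysis of $w=(v_i)_x$: one has to run weak and strong maximum principles on the unbounded line $\R$ for an equation whose coefficients and data grow at most polynomially and whose leading coefficient $\s^2$ has only an $L^\infty$ derivative — which is why one must use the divergence form of the $w$-equation — and one has to upgrade $w\le0$ to $w<0$ through the non-triviality clause of (FG2). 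The regularity of the two linear problems and the justification that $w$ indeed solves the differentiated equation are routine but must be checked.
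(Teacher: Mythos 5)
Your proposal is correct and follows essentially the same route as the paper: posit a constant sign for $v_x$ so the system decouples into a forward Fokker--Planck equation with constant drift (solved via the associated diffusion, giving $M(m_1(t,\cdot))=bt$) followed by a linear backward equation for $v$, then differentiate and use the weak and strong maximum principles together with (FG2) to confirm the strict sign of $w=v_x$. The only cosmetic differences are that you write the $w$-equation in divergence form and invoke Feynman--Kac, where the paper uses the non-divergence form and cites classical parabolic theory (after recording the $\sqrt{|t-s|}$ H\"older bound on $t\mapsto m(t)$ in $d_1$ needed for that theory).
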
   
 \begin{proof}  We begin with the construction of $(v_1, m_1)$ and drop the subscripts. Observe that if $v_x<0$ the second equation of \eqref{pde} becomes 
 \begin{equation}
\label{m1}
 m_t  + bm_x = \frac 12 (\s^2(x) m)_{xx}  \quad 
\text{ in  }(0, T)\times \R.
\end{equation}
 A solution of this equation with the initial condition $m(0,x)=\nu(x)$ exists by standard results on parabolic equations \cite {Fri}, and it is the law of the process $X(\cdot)$ solving
 \begin{equation*}
	X(t) = \xi + bt + \int_{0}^{t} 
	\sigma(X(s)) d W(s)
	 ,\quad t\in [0,T],
\end{equation*}
where $W$ is a standard one-dimensional Wiener process and $\Law(\xi) = \nu$. 
Then
\begin{equation}
\label{Mm1}
M(m(t)) = \M [X(t)] = M(\nu) + bt >0 \quad \forall t\in (0, T].
\end{equation}
Moreover, there exists $C>0$ such that 
  \begin{equation}
\label{d1}
d_1(m(t), m(s)) \leq C(b+\|\s\|_\infty)\sqrt{|t-s|} ,
\end{equation}
see, e.g., \cite{Car}.
For such $m$ we consider the Cauchy problem 
  \begin{equation}
\label{v1}
 -v_t -bv_x = \frac 12 \s^2(x) v_{xx} + F(x, m(t)) \quad 
 \text{ in  }(0, T)\times \R , \quad  v(T,x)=G(x, m(T)) ,
\end{equation}
which has a unique classical solution by standard results on parabolic equations \cite{Fri}, in view of \eqref{d1} and the assumptions  \rm{(F1)}, \rm{(G1)}, and \eqref{sigma}. If we show that $v_x<0$ this equation coincides with the first PDE in \eqref{pde} and therefore we get the desired solution  $(v_1, m_1)$ of \eqref{pde}. 

We consider $w:=v_x$ and by the assumptions on the data can differentiate the equation \eqref{v1} to get
   \[
 -w_t -bw_x = 
 \s(x) \s(x)_x w_{x} + \frac 12 \s^2(x) w_{xx} + 
 D_xF(x, m(t)) \quad 
 \text{ in  }(0, T)\times \R , \quad 
 w(T,x)=
 D_xG(x, m(T)) .
 \]
By \eqref{Mm1} and (FG2) $D_xF(x, m(t))\leq 0$ and $D_xG(x, m(T))\leq 0$, thus   the comparison principle implies  $w\leq 0$. 

By the Strong Maximum Principle, if $w(t,x)=0$ for some $t<T$ and some $x$, then $w(s,y)=0$ for all $t<s<T$ and all $y$, 
and so  $D_xG(\cdot, m(T))\equiv 0$. Then we get a contradiction  if the  condition $D_xG(x, \mu)\not\equiv 0$ for all $M(\mu)\ne 0$  holds in (MF2), and reach the desired conclusion $w(t,\cdot)<0$ for all $t<T$.

  If, instead, only the condition $D_xF(\cdot, \mu)\not\equiv 0$ for all $M(\mu)\ne 0$  holds in (MF2), 
  from $w(s,y)=0$ for all $t<s<T$ and all $y$ we get a contradiction with the PDE for $w$ in the interval $(t, T)$, because $D_xF(\cdot, m(t))\not\equiv 0$ and all the other terms in the equation are null. This completes the proof of the existence of $(v_1, m_1)$ with the stated properties.

 The  solution $(v_2, m_2)$ is built in a symmetric way. We first solve 
 \begin{equation*}\label{m2}
 m_t  + am_x = \frac 12 (\s^2(x) m)_{xx}, \quad m(0,x)=\nu(x) ,
 \end{equation*}
and use that the solution  is the law of the process  
 \begin{equation*}
\label{EqDynamics2}
X(t) = \xi + at + \int_{0}^{t} 
	\sigma(X(s)) d W(s) ,
	\end{equation*}
to see that $M(m(t)) = M(\nu) + at<0$ for $t>0$. Next, for such $m$ we solve the Cauchy problem
 \begin{equation}
\label{v2}
 -v_t - av_x = \frac 12 \s^2(x) v_{xx} + F(x, m(t)) \quad 
 \text{ in  }(0, T)\times \R , \quad  v(T,x)=G(x, m(T)) .
\end{equation}
We differentiate this equation and use the assumption (FG2) and the Strong Minimum Principle as before to show that $v_x(t,x)>0$ for all $x$ and $t\in (0, T)$. Then for such solution the last equation coincides with the second equation of \eqref{pde}, which completes the proof of the existence of $(v_2, m_2)$ with the stated properties.
%
  \end{proof} 
 \begin {rem} Symmetry. \upshape
Assume $b=-a$, so that $H(p)=b|p|$, and the data are even, i.e.,
\[
F(x,\mu)=F(-x,\mu) ,\quad G(x,\mu)=G(-x,\mu), \quad \s(x)=\s(-x), \qquad \nu(x)=\nu(-x).
\]
Then the solutions $(v_1, m_1)$ and $(v_2, m_2)$ built in the Theorem are even reflection one of the other, i.e.,
\[
v_1(t,x)=v_2(t,-x), \qquad m_1(t,x)=m_2(t,-x),
\]
as it is easy to check in the construction.
\end  {rem} 
 \begin {rem}  \upshape  If $F\equiv 0$ we can drop the sign condition on $a$ and $b$ and consider any initial density $\nu$ such that $\nu$ with $-bT<M(\nu)<-aT$. Then the same proof produces two solutions with $(v_1)_x(t,x)<0$ and $(v_2)_x(t,x)>0$ for all $t<T$, and $M(m_1(T,\cdot))>0$, $M(m_2(T,\cdot))<0$.
\end  {rem} 
%
\subsection{Eventual non-uniqueness with smooth Hamiltonian}

 In this section we consider Hamiltonians that coincide with the one defined by \eqref{Ha} only for $|p|\geq \d>0$ and therefore can be smooth, see the Example \eqref{ex_smooth}. The precise assumption 
  is 
 \begin{equation}
 \label{H2}	
H\in C(\R), \quad  \exists \, \d, b > 0, \, a<0 \, :\,
H(p) = -bp \text{ if } p\leq -\d , \quad H(p) = -ap \text{ if } p\geq \d .
\end{equation}
On the other hand the assumptions on $D_xG$ in (FG2) are strengthened a bit by adding
\begin{description}
\item [(G3)] 
for some $\eps>0$,   $D_xG(\cdot, \mu)\leq -\d$ if $M(\mu)\geq\eps b$, and  $D_xG(\cdot, \mu)\geq \d$ if $M(\mu)\leq\eps a$.
\end{description}
Then we get the existence of two distinct solutions if the time horizon $T$ is larger than $\eps$.
 \begin{thm} 
  \label{T2}
 Assume  {\rm(F1), 
 (G1), 
 (FG2), (G3)}, \eqref{sigma}, and that  $H$  satisfies \eqref{H2}. Then, for all $T\geq\eps$ and $\nu$ with $M(\nu)=0$, there are two classical solutions $(v_1, m_1), (v_2, m_2)$  of \eqref{pde} such that $(v_1)_x(t,x)\leq-\d$ and $(v_2)_x(t,x)\geq\d$ for all $ 0\leq t\leq T 
 $. 
\end{thm}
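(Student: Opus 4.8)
The plan is to transcribe the proof of Theorem~\ref{T1} almost line for line, making one substantive change. In Theorem~\ref{T1} the sign of $v_x$ is obtained by first proving that $w:=v_x$ satisfies $w\le 0$ (resp.\ $w\ge 0$) by a comparison argument and then upgrading to a strict inequality for $t<T$ via the Strong Maximum Principle. Here I would instead compare $w$ directly with the \emph{constant} barrier $-\d$ (resp.\ $+\d$): the quantitative gap of size $\d$ is exactly what the reinforced terminal condition (G3) supplies, once the hypothesis $T\ge\eps$ is used to ensure that the mean of the forward density is large enough to trigger (G3). A second, purely organisational point: as in Theorem~\ref{T1} I would solve the \emph{linear} Cauchy problem \eqref{v1} (resp.\ \eqref{v2}) rather than the fully nonlinear Hamilton--Jacobi equation; this keeps the Hamiltonian out of the differentiation step entirely, so the behaviour of $H$ on $[-\d,\d]$ never enters, and it is only at the end that \eqref{H2} is invoked to recognise the pair we have built as a solution of the nonlinear system \eqref{pde}.

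In detail, for $(v_1,m_1)$ I would take $m:=m_1$ to be the unique classical solution of $m_t+bm_x=\frac12(\s^2 m)_{xx}$, $m(0,\cdot)=\nu$, that is, the law of $X(t)=\xi+bt+\int_0^t\s(X(s))\,dW(s)$ with $\Law(\xi)=\nu$; exactly as in Theorem~\ref{T1} it lies in $\tilde{\mathcal{P}}_{1}(\R)$ for each $t$, is strictly positive of unit mass for $t>0$, satisfies the H\"older-in-time bound \eqref{d1}, and, since $M(\nu)=0$, has $M(m(t))=bt$, so that $M(m(t))\ge 0$ for all $t\in[0,T]$ and $M(m(T))=bT\ge\eps b$ (because $T\ge\eps$ and $b>0$). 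For this $m$ I would solve \eqref{v1}, obtaining a unique classical $v$ by the parabolic theory already used in Theorem~\ref{T1}, and differentiate it in $x$ to get, verbatim, a uniformly parabolic linear equation for $w:=v_x$ with source $D_xF(x,m(t))$ and terminal datum $D_xG(x,m(T))$. By (FG2) and $M(m(t))\ge 0$ the source is $\le 0$ for every $t$, and by (G3) and $M(m(T))\ge\eps b$ the terminal datum is $\le-\d$; hence $w+\d$ solves the same equation with nonpositive source and nonpositive terminal datum, so the comparison principle used in Theorem~\ref{T1} gives $w+\d\le 0$, i.e.\ $v_x\le-\d$ on $[0,T]\times\R$. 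Since $v_x\le-\d$ everywhere and $H$ is affine on $(-\infty,-\d]$ by \eqref{H2}, we have $H(v_x)=-bv_x$ and $H'(v_x)=-b$, so $v$ in fact solves the Hamilton--Jacobi equation of \eqref{pde} and the Kolmogorov equation $m_t-(H'(v_x)m)_x=\frac12(\s^2 m)_{xx}$ is precisely the one $m$ was built to solve; thus $(v_1,m_1):=(v,m)$ is a classical solution of \eqref{pde} with $(v_1)_x\le-\d$. The pair $(v_2,m_2)$ is obtained symmetrically with drift $a$: then $M(m_2(t))=at\le 0$ and $M(m_2(T))=aT\le\eps a$ (since $T\ge\eps$ and $a<0$), so $D_xF(\cdot,m_2(t))\ge 0$ and $D_xG(\cdot,m_2(T))\ge\d$, and comparing $(v_2)_x-\d$ with $0$ yields $(v_2)_x\ge\d$; using $H(p)=-ap$ for $p\ge\d$ from \eqref{H2} then identifies $(v_2,m_2)$ as a solution of \eqref{pde}. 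The two solutions are distinct because $(v_1)_x<0<(v_2)_x$.

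The only step that genuinely requires care --- and it is already needed in Theorem~\ref{T1}, so nothing essentially new arises --- is the legitimacy of the comparison principle on the unbounded domain $\R$: one must know that $w=v_x$ grows at most polynomially in $x$, uniformly for $t\in[0,T]$, so that the maximum principle for the linear parabolic Cauchy problem applies in a suitable growth class (of T\"acklind / Phragm\'en--Lindel\"of type). This is exactly the growth information underlying the well-posedness of \eqref{v1} itself, and it follows from the polynomial growth bounds on $F$ and $G$ in (F1), (G1) together with the standard parabolic estimates. All the remaining ingredients are a direct transcription of the proof of Theorem~\ref{T1}, with the barrier $0$ for $v_x$ replaced throughout by $\mp\d$.
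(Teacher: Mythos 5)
Your proposal is correct and coincides with the paper's own proof: the paper likewise reuses the construction of Theorem~\ref{T1} verbatim (linear forward equation with drift $b$, then the linear Cauchy problem \eqref{v1}, then differentiation), notes $M(m_1(T))=bT\ge b\eps$ so that (G3) forces the terminal datum of $w=v_x$ below $-\d$, and applies the Maximum Principle to conclude $w\le-\d$ on all of $[0,T]$ before invoking \eqref{H2} to identify the linear equations with the nonlinear system. Your explicit remark that $w+\d$ solves the same zeroth-order-free linear equation is just an unpacking of that same Maximum Principle step.
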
   
 \begin{proof}
 The construction is the same as in the proof of Theorem \ref{T1}. Now we have, by \eqref{Mm1}
 \[
M(m_1(T)) =  bT \geq b\eps , 
\]
so $D_xG(\cdot, \mu)\leq -\d$ by (G3). Then $w:= (v_1)_x$ satisfies $w(T,x)\leq -\d$ for all $x$, and the Maximum Principle implies $(v_1)_x(t,x)=w(t,x)\leq -\d$ for all $ 0\leq t\leq T$. Then, by \eqref{H2}, the equation \eqref{v1} for $v_1$ coincides with the first PDE in \eqref{pde} and the equation \eqref{m1} for $m_1$ coincides with the second PDE in \eqref{pde}. The construction of the second solution is symmetric because $M(m_2(T)) =  aT \leq a\eps$.
 \end{proof}
\begin{ex}
\upshape
\label{ex_smooth}
The Hamiltonian
 \begin{equation*}
H(p) := \max_{
 |\g| \leq 1}\left\{-p\g+\frac12\d(1-\g^2)\right\} =
\left\{
\begin{array}{ll} 
\frac{p^2}{2\d} +\frac{\d}2 , &\text{ if } |p|\leq \d, \\ |p| , &\text{ if } |p|\geq \d ,\end{array}
\right.\,
\end{equation*}
satisfies \eqref{H2} with $-a=b=1$. Note that $H\in C^1(\R)$ and $H'$ is Lipschitz and bounded, so it satisfies the assumptions required for the Hamiltonian in the uniqueness result for short time horizon of Section \ref{short}, see Remark \ref{C11}. A more detailed probabilistic discussion of this example is in Section  \ref{SectSimpleNonzero}.
\end{ex}
\begin{ex}
 \label{ese2}
\upshape
The terminal cost
\[
G(x,\mu)=- \b xM(\mu) +g(\mu) ,\quad \b>0
\]
satisfies (G1). Since $D_xG(x,\mu)=-\b M(\mu)$, the inequality $M(\mu)D_xG(x,\mu)\leq 0$ in (FG2) is satisfied and (G3) holds with the choice $\eps:=\max\{\frac\d {b\beta}, \frac\d {|a|\beta}\}$.


\end{ex}
 %

\section{Probabilistic approach to multiple MFG solutions}
\label{SectProb}

In this section, we give examples of non-uniqueness analogous to those obtained above, but under slightly different regularity assumptions, based on the probabilistic representation of the mean field game. By this we mean that we work directly with the underlying stochastic dynamics of the controlled state process and the corresponding expected costs. A solution of the mean field game is then a couple of control strategy and flow of probability measures satisfying a certain fixed point property, namely: The strategy is optimal for the control problem associated with the flow of probability measures, which in turn coincides with the flow of marginal distributions of the state process under the control strategy. In subsection~\ref{SectProbMFG}, we give two definitions of solution, differing with respect to the admissible strategies (stochastic open-loop vs.\ Markov feedback). The second definition, based on Markov feedback strategies, is more closely related to the \mbox{PDE} characterization \eqref{pde} of the mean field game, see Remark~\ref{RemVerification} below.

Denote by $\mathcal{P}(\mathbb{R})$ the set of all probability measures on the Borel sets of $\mathbb{R}$, and set
\begin{align*}
	&\mathcal{P}_{1}(\mathbb{R})\doteq \left\{\mu\in \mathcal{P}(\mathbb{R}) : \int |x| \mu(dx) < \infty \right\},& & M(\mu)\doteq \int x\, \mu(dx),& & \mu\in \mathcal{P}_{1}(\mathbb{R}). &
\end{align*}
Endow $\mathcal{P}(\mathbb{R})$ with the topology of weak convergence of measures, and $\mathcal{P}_{1}(\mathbb{R})$ with the topology of weak convergence of measures plus convergence of first absolute moments. Notice that
\[
	\tilde{\mathcal{P}}_{1}(\mathbb{R}) \subset \mathcal{P}_{1}(\mathbb{R})
\]
if we identify probability densities with the probability measures they induce. The topology on $\tilde{\mathcal{P}}_{1}(\mathbb{R})$ generated by the Monge-Kantorovich distance coincides with the topology induced by $\mathcal{P}_{1}(\mathbb{R})$.

\subsection{The mean field game}
\label{SectProbMFG}

As above, we consider mean field games in dimension one over a finite time horizon $T > 0$, with drift coefficient of the state dynamics equal to the control action and dispersion coefficient $\sigma\!: \mathbb{R} \rightarrow \mathbb{R}$, assumed to be Lipschitz continuous (hence of sublinear growth), but possibly degenerate. Let $\Gamma \subseteq \mathbb{R}$ be a compact interval, the set of control actions, and let $f\!: \mathbb{R} \times \mathcal{P}(\mathbb{R})\times \Gamma \rightarrow \mathbb{R}$, $g\!: \mathbb{R} \times \mathcal{P}(\mathbb{R}) \rightarrow \mathbb{R}$ be measurable functions with $f(\cdot,\mu,\gamma)$ of polynomial growth uniformly over compacts in $\mathcal{P}(\mathbb{R})\times \Gamma$ and $g(\cdot,\mu)$ of polynomial growth uniformly over compacts in $\mathcal{P}(\mathbb{R})$.

Let $\mathcal{U}$ be the set of triples $((\Omega,\mathcal{F},(\mathcal{F}_{t}),\Prb),u,W)$ such that $(\Omega,\mathcal{F},(\mathcal{F}_{t}),\Prb)$ forms a filtered probability space satisfying the usual hypotheses and carries a $\Gamma$-valued $(\mathcal{F}_{t})$-progressively measurable process $u$ and a one-dimensional $(\mathcal{F}_{t})$-Wiener process $W$. For $\nu\in \mathcal{P}(\mathbb{R})$, let $\mathcal{U}_{\nu}$ denote the set of quadruples $((\Omega,\mathcal{F},(\mathcal{F}_{t}),\Prb),\xi,u,W)$ such that $((\Omega,\mathcal{F},(\mathcal{F}_{t}),\Prb),u,W)\in \mathcal{U}$ and $\xi$ is a real-valued $\mathcal{F}_{0}$-measurable random variable with $\Prb\circ \xi^{-1} = \nu$. 

Let $\nu\in \mathcal{P}(\mathbb{R})$, and let $u \cong ((\Omega,\mathcal{F},(\mathcal{F}_{t}),\Prb),\xi,u,W)\in \mathcal{U}_{\nu}$. Notice that $\xi$ and $W$ are independent since, by definition of $\mathcal{U}_{\nu}$, $\xi$ is $\mathcal{F}_{0}$-measurable, while $W$ is a Wiener process with respect to the filtration $(\mathcal{F}_{t})$. The dynamics of the state process are then given by:
\begin{equation} \label{EqDynamics}
	X(t) = \xi + \int_{0}^{t} u(s)ds + \int_{0}^{t} \sigma\bigl(X(s)\bigr)dW(s),\quad t\in [0,T].
\end{equation}
Since $\sigma$ is Lipschitz, the solution $X = X^{u}$ of Eq.~\eqref{EqDynamics} is 
 uniquely determined up to $\Prb$-indistinguishability. Its law is determined by the law $\Prb\circ(\xi,u,W)^{-1}$.

The costs associated with initial distribution $\nu$, strategy $u\in \mathcal{U}_{\nu}$, initial time $t\in [0,T]$, and a flow of measures $\mathfrak{m}\in \mathcal{M}\doteq \mathbf{C}([0,T],\mathcal{P}(\mathbb{R}))$ are given by
\[
	J(t,\nu,u;\mathfrak{m})\doteq \Mean\left[ \int_{0}^{T-t} f\left(X^{u}(s),\mathfrak{m}(t+s),u(s)\right)ds + g\left(X^{u}(T-t),\mathfrak{m}(T)\right) \right],
\]
where $X^{u}$ is the unique solution of Eq.~\eqref{EqDynamics} under $u$, provided the expected value is finite; otherwise set $J(t,\nu,u;\mathfrak{m})\doteq \infty$. In the above definition of the cost functional, the processes $X^{u}$ and $u$ always start from time zero, while the flow of measures is shifted according to the initial time. In this way, the set $\mathcal{U}_{\nu}$ of admissible control bases does not depend on the initial time, as opposed to the more standard, though equivalent, definition used in, for instance, \cite{flemingsoner}.

If $\nu = \delta_{x}$ for some $x\in \mathbb{R}$, then we can identify $\mathcal{U}_{\delta_x}$ with $\mathcal{U}$. The value function for a flow of measures $\mathfrak{m}\in \mathcal{M}$ is then defined by
\[
	V(t,x;\mathfrak{m})\doteq \inf_{u\in \mathcal{U}} J(t,\delta_{x},u;\mathfrak{m}),\quad (t,x)\in [0,T]\times \mathbb{R}.
\]
Notice that $V(t,x;\mathfrak{m})$ is finite for every $(t,x)\in [0,T]\times \mathbb{R}$ thanks to the growth assumptions on $f$, $g$ and the boundedness of $\Gamma$.

\begin{rem}
\upshape
Let $t\in [0,T]$, $\nu\in \mathcal{P}(\mathbb{R})$, $\mathfrak{m}\in \mathcal{M}$. If $J(t,\nu,u;\mathfrak{m}) < \infty$ for every $u\in \mathcal{U}_{\nu}$, then
\[
	\inf_{\tilde{u}\in \mathcal{U}_{\nu}} J(t,\nu,\tilde{u};\mathfrak{m}) = \int_{\mathbb{R}} V(t,x;\mathfrak{m}) \nu(dx).
\]
\end{rem}

\begin{defn} \label{DefOLSolution}
Let $\nu\in \mathcal{P}(\mathbb{R})$. An \emph{open-loop solution of the mean field game} with initial distribution $\nu$ is a pair $(u,\mathfrak{m})$ such that
\begin{enumerate}[(i)]
	\item $u \cong ((\Omega,\mathcal{F},(\mathcal{F}_{t}),\Prb),\xi,u,W)\in \mathcal{U}_{\nu}$ and $\mathfrak{m}\in \mathcal{M}$;
	
	\item optimality condition: $\infty >  J(0,\nu,u;\mathfrak{m}) = \int_{\mathbb{R}} V(0,x;\mathfrak{m}) \nu(dx)$;
	
	\item mean field condition: $\Prb\circ (X^{u}(t))^{-1} = \mathfrak{m}(t)$ for every $t\in [0,T]$, where $X^{u}$ is the unique solution of Eq.~\eqref{EqDynamics} under $u$.
\end{enumerate}

\end{defn}

We are mainly interested in solutions of the mean field game in Markov feedback strategies. To this end, set
\[
	\mathcal{A} \doteq \left\{ \alpha\!: [0,T]\times \mathbb{R} \rightarrow \Gamma : \alpha \text{ measurable}  \right\}.
\]
For $\alpha\in \mathcal{A}$, $\nu\in \mathcal{P}(\mathbb{R})$, $t_{0}\in [0,T]$ consider the equation:
\begin{equation} \label{EqDynamicsFb}
	X(t) = \xi + \int_{0}^{t} \alpha\bigl(t_{0}+s,X(s)\bigr)ds + \int_{0}^{t} \sigma\bigl(X(s)\bigr)dW(s),\quad t\in [0,T-t_{0}],
\end{equation}
where $W$ is a one-dimensional $(\mathcal{F}_{t})$-Wiener process on some filtered probability space $(\Omega,\mathcal{F},(\mathcal{F}_{t}),\Prb)$ satisfying the usual hypotheses and carrying a real-valued $\mathcal{F}_{0}$-measurable random variable $\xi$ with $\Prb\circ \xi^{-1} = \nu$.

For $\nu\in \mathcal{P}(\mathbb{R})$, $t_{0}\in [0,T]$, let $\mathcal{A}_{\nu,t_{0}}$ denote the set of all $\alpha\in \mathcal{A}$ such that Eq.~\eqref{EqDynamicsFb} with initial distribution $\nu$ and initial time $t_{0}$ possesses a solution that is unique in law.

\begin{rem} 
\label{RemGirsanov}
\upshape
	If $\sigma$ is bounded and such that $\inf_{x\in \mathbb{R}} \sigma(x) > 0$, then, thanks to Girsanov's theorem, $\mathcal{A}_{\nu,t_{0}} = \mathcal{A}$ for all $t_{0}\in [0,T]$, $\nu\in \mathcal{P}(\mathbb{R})$.
\end{rem}

If $\alpha\in \mathcal{A}_{\nu,t_{0}}$, then there exists $u \cong ((\Omega,\mathcal{F},(\mathcal{F}_{t}),\Prb),\xi,u,W) \in \mathcal{U}_{\nu}$ such that
\begin{equation} \label{EqOLFb}
	\alpha\bigl(t_{0}+s,X^{u}(s,\omega)\bigr) = u(s,\omega)
\end{equation}
for  $\mathrm{Leb}_{T-t_{0}}\otimes \Prb$-almost all $(s,\omega)\in [0,T-t_{0}]\times \Omega$, where $\mathrm{Leb}_{T-t_{0}}$ denotes Lebesgue measure on $\mathcal{B}([0,T-t_{0}])$ and $X^{u}$ is the unique solution of Eq.~\eqref{EqDynamics} under $u$. Moreover, by uniqueness in law, if $\tilde{u} \in \mathcal{U}_{\nu}$ is any other stochastic open-loop strategy such that
\begin{equation*}
	\alpha\bigl(t_{0}+s,X^{\tilde{u}}(s,\omega)\bigr) = \tilde{u}(s,\omega) \text{ for  $\mathrm{Leb}_{T-t_{0}}\otimes \tilde{\Prb}$-a.a.\ } (s,\omega)\in [0,T-t_{0}]\times\tilde{\Omega}
\end{equation*}
with $X^{\tilde{u}}$ the unique solution of Eq.~\eqref{EqDynamics} under control $\tilde{u}$ and $\tilde{\Prb}$ the probability measure coming with $\tilde{u}$, then
\[
	\Prb\circ \left(X^{u},u,W\right)^{-1} = \tilde{\Prb}\circ \left(X^{\tilde{u}},\tilde{u},\tilde{W}\right)^{-1}.
\]

We can therefore define the costs associated with initial time $t\in [0,T]$, initial distribution $\nu$, feedback strategy $\alpha\in \mathcal{A}_{\nu,t}$, and a flow of measures $\mathfrak{m}\in \mathcal{M}$ by setting
\[
	J(t,\nu,\alpha;\mathfrak{m})\doteq J(t,\nu,u;\mathfrak{m})
\]
for any stochastic open-loop strategy $u\in \mathcal{U}_{\nu}$ such that Eq.~\eqref{EqOLFb} holds with respect to $\alpha$, $u$ and initial time $t_{0} = t$.

\begin{defn} \label{DefFbSolution}
Let $\nu\in \mathcal{P}(\mathbb{R})$. A \emph{Markov feedback solution of the mean field game} with initial distribution $\nu$ is a pair $(\alpha,\mathfrak{m})$ such that
\begin{enumerate}[(i)]
	\item $\alpha\in \mathcal{A}_{\nu}$ and $\mathfrak{m}\in \mathcal{M}$;
	
	\item optimality condition: $\infty > J(0,\nu,\alpha;\mathfrak{m}) = \int_{\mathbb{R}} V(0,x;\mathfrak{m}) \nu(dx)$;
	
	\item mean field condition: $\Prb\circ (X^{u}(t))^{-1} = \mathfrak{m}(t)$ for every $t\in [0,T]$, where $X^{u}$ is the unique solution of Eq.~\eqref{EqDynamics} under $u$ with $u\in \mathcal{U}_{\nu}$ such that Eq.~\eqref{EqOLFb} holds with respect to $\alpha$ and $u$.
\end{enumerate}

\end{defn}

Two solutions in the sense of Definition~\ref{DefOLSolution} or Definition~\ref{DefFbSolution} are called \emph{equivalent} if their flows of measures coincide. Notice that, by the mean field condition, equivalent solutions have the same initial distribution.

\begin{rem}
 \label{RemVerification}
 \upshape
Let the function $f$ for the combined running costs have the form
\[
	f(x,\mu,\gamma) = l(\gamma) + F(x,\mu)
\]
for some continuous function $l\!:\Gamma \rightarrow \mathbb{R}$ and some function $F$ as in Section~\ref{section pde}. Set
\[
	H(p)\doteq \max_{\gamma\in \Gamma} \{-l(\gamma) - p\gamma\},\quad p\in \mathbb{R}.
\]
Let $\mathfrak{m}\in \mathcal{M}$, and suppose that $v = v_{\mathfrak{m}}$ is a classical solution of the Hamilton-Jacobi-Bellman equation
\begin{equation} \label{EqProbHJB}
	-v_t + H(v_x) = \frac 12 \s^2(x) v_{xx} +F(x, m(t,\cdot)) \text{ in  }[0,T)\times \R
\end{equation}
with terminal condition $v(T,\cdot) = g(\cdot,\mathfrak{m}(T))$. Then $v(t,x) = V(t,x,\mathfrak{m})$ for all $(t,x)\in [0,T]\times \mathbb{R}$. Moreover, if $\alpha\in \mathcal{A}_{\mathfrak{m}(0)}$ is such that the mean field condition of Definition~\ref{DefFbSolution} holds for $\alpha$ and $\mathfrak{m}$ and
\[
	\alpha(t,x)\in \argmax_{\gamma\in \Gamma} \{-l(\gamma) - p\gamma\} \text{ for all } (t,x)\in [0,T)\times \mathbb{R},
\]
then $(\alpha,\mathfrak{m})$ is a solution in the sense of Definition~\ref{DefFbSolution}.
\end{rem}


\subsection{Multiple solutions}
\label{SectProbMultSol}

Here, the space of control actions $\Gamma$ is assumed to be a compact interval; thus, $\Gamma = [a,b]$ for some $a,b\in \mathbb{R}$ with $a < b$. Let $\psi \in \mathbf{C}^{2}(\mathbb{R})$ be such that
\[
	\sup_{x\in \mathbb{R}} e^{-c|x|}\cdot \max\{ |\psi(x)|, |\psi^{\prime}(x)|, |\psi^{\prime\prime}(x)| \} < \infty \text{ for some }c\in (0,\infty),
\]
while for all $x\in \mathbb{R}$,
\begin{subequations} \label{EqPsi}
\begin{align}
	\label{EqPsi+}	b\cdot \psi^{\prime}(x) + \frac{1}{2}\sigma^{2}(x) \psi^{\prime\prime}(x) &> 0, \\
	\label{EqPsi-}	a\cdot \psi^{\prime}(x) + \frac{1}{2}\sigma^{2}(x) \psi^{\prime\prime}(x) &< 0.
\end{align}
\end{subequations}

Here are three examples for choices of $\psi$ such that \eqref{EqPsi} holds:
\begin{exmpl}
	Let $c > 0$, $d\in \mathbb{R}$. Set $\psi(x)\doteq c\cdot x + d$, $x\in \mathbb{R}$. Then \eqref{EqPsi} holds if $a < 0 < b$. 
\end{exmpl}

\begin{exmpl}
	Let $\lambda > 0$. Set $\psi(x)\doteq e^{\lambda x} - 1$, $x\in \mathbb{R}$. Then \eqref{EqPsi+} holds if $b > 0$, and \eqref{EqPsi-} holds if $\sigma$ is bounded and $a < -\frac{\lambda}{2} \sigma^{2}(x)$ for all $x\in \mathbb{R}$. 
\end{exmpl}

\begin{exmpl}
	Set $\psi(x)\doteq \tanh(x)$, $x\in \mathbb{R}$. Then \eqref{EqPsi} holds if $\sigma$ is bounded and $a \leq -\sigma^{2}(x)$, $\sigma^{2}(x) \leq b$ for all $x\in \mathbb{R}$. 
\end{exmpl}

Set
\begin{align*}
	\mathcal{P}_{\exp}(\mathbb{R})&\doteq \left\{\mu\in \mathcal{P}(\mathbb{R}) : \int e^{c x} \mu(dx) < \infty \text{ for all } c\in \mathbb{R} \right\}, \\
	\mathcal{P}_{\psi}(\mathbb{R})&\doteq \left\{\mu\in \mathcal{P}(\mathbb{R}) : \int |\psi(x)| \mu(dx) < \infty \right\}.
\end{align*}
By the growth assumption on $\psi$, $\mathcal{P}_{\exp}(\mathbb{R}) \subset \mathcal{P}_{\psi}(\mathbb{R})$. For $\mu\in \mathcal{P}_{\psi}(\mathbb{R})$, set
\[
	M_{\psi}(\mu)\doteq \int_{\mathbb{R}} \psi(y)\, \mu(dy).
\]
If $\psi(x)\doteq x$, $x \in \mathbb{R}$, then we have $\mathcal{P}_{\psi}(\mathbb{R}) = \mathcal{P}_{1}(\mathbb{R})$ and $M_{\psi}(\mu) = M(\mu)$, the mean value of $\mu\in \mathcal{P}_{1}(\mathbb{R})$, in accordance with the notation introduced in Section \ref{section pde}. 

In this subsection, we assume the running costs $f$ to be of the form
\[
	f(x,\mu,\gamma) = F(x,\mu)
\]
for some $F\!: \mathbb{R}\times \mathcal{P}(\mathbb{R}) \rightarrow \mathbb{R}$ measurable such that for every $\mu\in \mathcal{P}_{\psi}(\mathbb{R})$,
\[
	F(\cdot,\mu) \text{ is }
	\begin{cases}
	\text{ decreasing if } M_{\psi}(\mu) > 0,\\
	\text{ increasing if } M_{\psi}(\mu) < 0,
	\end{cases}
\]
and the terminal costs $g$ to be given by
\[
	g(x,\mu) = G(x,\mu)
\]
for some $G\!: \mathbb{R}\times \mathcal{P}(\mathbb{R}) \rightarrow \mathbb{R}$ measurable such that for every $\mu\in \mathcal{P}_{\psi}(\mathbb{R})$,
\[
	G(\cdot,\mu) \text{ is }
	\begin{cases}
	\text{ strictly decreasing if } M_{\psi}(\mu) > 0,\\
	\text{ strictly increasing if } M_{\psi}(\mu) < 0.
	\end{cases}
\]
We set $F(\cdot,\mu)\doteq 0$, $G(\cdot,\mu)\doteq 0$ if $\mu\in \mathcal{P}(\mathbb{R}) \setminus \mathcal{P}_{\psi}(\mathbb{R})$. Moreover, $F(\cdot,\mu)$, $G(\cdot,\mu)$ are assumed to be of polynomial growth uniformly over compacts in $\mathcal{P}(\mathbb{R})$. Note that these monotonicity conditions on $F$ and $G$ are very similar to the assumption (FG2) of Section \ref{section pde}.

\begin{prop} \label{PropMultSol}
Grant the hypotheses above, and assume that either $\sigma$ is bounded or that $\psi$ together with its first two derivatives is of polynomial growth. Let $\nu\in \mathcal{P}_{\exp}(\mathbb{R})$ be such that $M_{\psi}(\nu) = 0$. Then there exist two non-equivalent open-loop solutions of the mean field game with initial distribution $\nu$, and the associated value functions are different and not constant.
\end{prop}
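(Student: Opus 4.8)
The plan is to produce the two solutions explicitly: since here the running cost $f(x,\mu,\gamma)=F(x,\mu)$ carries no dependence on the control action, the control enters the cost functional only through the state, so the control problem associated with a fixed flow $\mathfrak m$ is a pure optimal-steering problem. Because $F(\cdot,\mathfrak m(s))$ and $G(\cdot,\mathfrak m(T))$ are monotone with a sign dictated by $M_{\psi}(\mathfrak m(s))$ and $M_{\psi}(\mathfrak m(T))$, the optimal strategy will be to push the state to one of the two extremes of $\Gamma=[a,b]$. Accordingly I would take the candidate solutions to be the constant open-loop (equivalently, feedback) controls $\bar u\equiv b$ and $\underline u\equiv a$, together with the flows of marginals of the corresponding diffusions, and then (i) check the sign of $M_{\psi}$ along these flows, (ii) verify optimality by a pathwise comparison argument, and (iii) deduce non-equivalence and the properties of the value functions.

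For step (i), fix a filtered probability space carrying a Wiener process $W$ and an $\mathcal{F}_{0}$-measurable $\xi\sim\nu$, and for $\gamma\in\{a,b\}$ let $X^{\gamma}$ be the unique strong solution of $dX=\gamma\,dt+\sigma(X)\,dW$ with $X(0)=\xi$; set $\mathfrak m_{\gamma}(t)\doteq\Law(X^{\gamma}(t))$, which lies in $\mathcal{M}$ since $X^\gamma$ has continuous paths. The key point is that $t\mapsto M_{\psi}(\mathfrak m_{\gamma}(t))$ is differentiable with
\[
\tfrac{d}{dt}M_{\psi}(\mathfrak m_{\gamma}(t))=\Mean\!\left[\gamma\,\psi^{\prime}(X^{\gamma}(t))+\tfrac12\sigma^{2}(X^{\gamma}(t))\,\psi^{\prime\prime}(X^{\gamma}(t))\right],
\]
which I would obtain by applying It\^o's formula to $\psi(X^{\gamma}(t))$ and taking expectations, after checking that the stochastic integral $\int_{0}^{\cdot}\psi^{\prime}(X^{\gamma}(s))\sigma(X^{\gamma}(s))\,dW(s)$ is a true martingale and the drift term is integrable in $(t,\omega)$. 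This is exactly where the two alternative hypotheses are used: if $\sigma$ is bounded, then $X^{\gamma}(t)$ has exponential moments of every order (since $\xi\in\mathcal{P}_{\exp}(\mathbb{R})$ and the martingale part has quadratic variation bounded by $\|\sigma\|_{\infty}^{2}t$), which dominates the exponential growth of $\psi,\psi^{\prime},\psi^{\prime\prime}$; if instead $\psi,\psi^{\prime},\psi^{\prime\prime}$ grow polynomially, only polynomial moments of $X^{\gamma}(t)$ are needed, and those always hold for Lipschitz $\sigma$. By \eqref{EqPsi+}, \eqref{EqPsi-} and $M_{\psi}(\mathfrak m_{\gamma}(0))=M_{\psi}(\nu)=0$, we conclude $M_{\psi}(\mathfrak m_{b}(t))>0$ and $M_{\psi}(\mathfrak m_{a}(t))<0$ for all $t\in(0,T]$.

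For step (ii), fix $\gamma=b$ and take an arbitrary $u\cong((\Omega,\mathcal{F},(\mathcal{F}_{t}),\Prb),\xi,u,W)\in\mathcal{U}_{\nu}$. On the same space let $\bar X$ solve $d\bar X=b\,dt+\sigma(\bar X)\,dW$, $\bar X(0)=\xi$; then $Z\doteq\bar X-X^{u}$ satisfies the linear SDE $dZ=(b-u(s))\,ds+\beta(s)Z\,dW$ with $b-u(s)\geq0$, $|\beta|$ bounded by the Lipschitz constant of $\sigma$, and $Z(0)=0$, whence $Z\geq0$, i.e.\ $X^{u}(t)\leq\bar X(t)$ for all $t$ a.s. Since $F(\cdot,\mathfrak m_{b}(s))$ is decreasing for $s\in(0,T]$ and $G(\cdot,\mathfrak m_{b}(T))$ is strictly decreasing, this gives $F(X^{u}(s),\mathfrak m_{b}(s))\geq F(\bar X(s),\mathfrak m_{b}(s))$ for a.e.\ $s$ and $G(X^{u}(T),\mathfrak m_{b}(T))\geq G(\bar X(T),\mathfrak m_{b}(T))$; integrating and taking expectations (all finite, since $\{\mathfrak m_{b}(s):s\in[0,T]\}$ is weakly compact so $F,G$ are dominated by a fixed polynomial and $X^{u}$ has the matching moments) yields $J(0,\nu,u;\mathfrak m_{b})\geq J(0,\nu,\bar u;\mathfrak m_{b})$. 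Thus the constant control $\bar u\in\mathcal{U}_{\nu}$ is optimal; by the remark following the definition of $V$ this means $J(0,\nu,\bar u;\mathfrak m_{b})=\int_{\mathbb{R}}V(0,x;\mathfrak m_{b})\,\nu(dx)$, and since $\Law(X^{\bar u}(t))=\mathfrak m_{b}(t)$ by construction, $(\bar u,\mathfrak m_{b})$ is an open-loop solution in the sense of Definition~\ref{DefOLSolution}. Reversing all inequalities (using \eqref{EqPsi-} and the monotonicity valid when $M_{\psi}<0$) produces a second open-loop solution $(\underline u,\mathfrak m_{a})$ with $\underline u\equiv a$.

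For step (iii), the two solutions are non-equivalent because $M_{\psi}(\mathfrak m_{b}(T))>0>M_{\psi}(\mathfrak m_{a}(T))$ forces $\mathfrak m_{b}(T)\neq\mathfrak m_{a}(T)$. Applying the comparison argument with initial law $\delta_{x}$ shows $V(0,x;\mathfrak m_{b})=\Mean\bigl[\int_{0}^{T}F(X^{b,x}(s),\mathfrak m_{b}(s))\,ds+G(X^{b,x}(T),\mathfrak m_{b}(T))\bigr]$, where $X^{b,x}$ starts deterministically at $x$; coupling $X^{b,x_{1}}$ and $X^{b,x_{2}}$ through the same $W$ for $x_{1}<x_{2}$ gives $X^{b,x_{1}}(t)\leq X^{b,x_{2}}(t)$, while $\Mean[X^{b,x_{2}}(T)-X^{b,x_{1}}(T)]=x_{2}-x_{1}>0$ forces strict inequality on an event of positive probability, so strict monotonicity of $G(\cdot,\mathfrak m_{b}(T))$ together with monotonicity of the $F$-term in the starting point yields that $V(0,\cdot;\mathfrak m_{b})$ is strictly decreasing, hence non-constant; symmetrically $V(0,\cdot;\mathfrak m_{a})$ is strictly increasing, so in particular the two value functions differ. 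I expect the only real difficulty to be technical rather than conceptual: making the It\^o differentiation of $M_{\psi}(\mathfrak m_{\gamma}(t))$ and the finiteness of all cost expectations rigorous under the weak, possibly degenerate assumptions on $\sigma$ — which is precisely the role of the stated integrability dichotomy and of $\nu\in\mathcal{P}_{\exp}(\mathbb{R})$.
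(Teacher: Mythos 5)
Your proposal is correct and follows essentially the same route as the paper's proof: constant extreme controls $u_{+}\equiv b$ and $u_{-}\equiv a$, It\^o's formula applied to $\psi(X_{\pm}(t))$ (with the same moment dichotomy — exponential moments when $\sigma$ is bounded, polynomial moments when $\psi$ and its derivatives grow polynomially — justifying the martingale property) to fix the sign of $M_{\psi}(\mathfrak m_{\pm}(t))$, a pathwise comparison $X^{u}\le X_{+}$ to obtain optimality from the monotonicity of $F$ and $G$, and the strict monotonicity of $G(\cdot,\mathfrak m_{\pm}(T))$ to separate the value functions. The only deviation is minor: where the paper invokes the Ikeda--Watanabe comparison theorem for the inequality $X^{u}(t)\le X_{+}(t)$, you derive it directly from the variation-of-constants representation of the linear SDE satisfied by $Z=X_{+}-X^{u}$, which is a valid self-contained substitute since $\sigma$ is Lipschitz.
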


\begin{proof}
Let $((\Omega,\mathcal{F},(\mathcal{F}_{t}),\Prb),\xi,W)$ be such that $(\Omega,\mathcal{F},(\mathcal{F}_{t}),\Prb)$ forms a filtered probability space satisfying the usual hypotheses with $W$ a one-dimensional $(\mathcal{F}_{t})$-Wiener process and $\xi$ a real-valued $\mathcal{F}_{0}$-measurable random variable such that $\Prb\circ \xi^{-1} = \nu$. Let $u_{+}\equiv b$ be the constant process equal to $b$, and let $u_{-}\equiv a$ be the constant process equal to $a$. By construction, $((\Omega,\mathcal{F},(\mathcal{F}_{t}),\Prb),\xi,u_{\pm},W) \in \mathcal{U}_{\nu}$. Let $X_{+}$ be the unique solution of Eq.~\eqref{EqDynamics} with $u = u_{+}$, and let $X_{-}$ be the unique solution of Eq.~\eqref{EqDynamics} with $u = u_{-}$. Thus, for all $t\in [0,T]$,
\begin{align*}
	X_{+}(t) 
	= \xi + t\cdot b + \int_{0}^{t} \sigma\bigl(X_{+}(s)\bigr)dW(s), \quad 
	X_{-}(t) 
	= \xi + t\cdot a + \int_{0}^{t} \sigma\bigl(X_{-}(s)\bigr)dW(s).
\end{align*}
By the Burkholder-Davis-Gundy inequalities, the sublinear growth of $\sigma$, the exponential integrability of $\xi$ and Gronwall's lemma,
\begin{equation} \label{EqMaxPolMoments}
	\Mean\left[ \sup_{t\in [0,T]} \left| X_{\pm}(t) \right|^{p} \right] < \infty \text{ for every } p \geq 1.
\end{equation}

If $\sigma$ is bounded, then it also holds that
\begin{equation}
 \label{EqMaxExpMoments}
	\Mean\left[ \sup_{t\in [0,T]} \exp\left( c\cdot X_{\pm}(t) \right) \right] < \infty \text{ for every } c \in \mathbb{R}.
\end{equation}
To check \eqref{EqMaxExpMoments},  observe that, thanks to the monotonicity and positivity of the exponential and the Cauchy-Schwarz inequality,
\[
	\Mean\left[ \sup_{t\in [0,T]} \exp\left( c\cdot X_{\pm}(t) \right) \right] \leq e^{|c|\max\{|a|,|b|\}T} \cdot \sqrt{\Mean\left[ e^{2c\xi}\right]} \cdot \sqrt{ \Mean\left[ \sup_{t\in [0,T]} \exp\left( 2c \int_{0}^{t} \sigma\bigl(X_{\pm}(s)\bigr)dW(s) \right) \right] }.
\]
The first expected value on the right-hand side above is finite by hypothesis. For the second, we have, by the boundedness of $\sigma$,
\begin{align*}
	&\Mean\left[ \sup_{t\in [0,T]} \exp\left( 2c \int_{0}^{t} \sigma\bigl(X_{\pm}(s)\bigr)dW(s) \right) \right] \\
	&\leq \Mean\left[ \sup_{t\in [0,T]} \exp\left( c \int_{0}^{t} \sigma\bigl(X_{\pm}(s)\bigr)dW(s) - \frac{c^{2}}{2} \int_{0}^{t} \sigma^{2}\bigl(X_{\pm}(s)\bigr) ds \right)^{2} \right] \cdot e^{c^{2} \|\sigma\|_{\infty}^{2} T} \\
	&\leq 4\Mean\left[ \exp\left( 2c \int_{0}^{T} \sigma\bigl(X_{\pm}(s)\bigr)dW(s) - c^{2} \int_{0}^{T} \sigma^{2}\bigl(X_{\pm}(s)\bigr) ds \right) \right] \cdot e^{c^{2} \|\sigma\|_{\infty}^{2} T} \\
	&\leq 4 \underbrace{\Mean\left[ \exp\left( 2c \int_{0}^{T} \sigma\bigl(X_{\pm}(s)\bigr)dW(s) - \frac{4c^{2}}{2} \int_{0}^{T} \sigma^{2}\bigl(X_{\pm}(s)\bigr) ds \right) \right]}_{=1} \cdot e^{2c^{2} \|\sigma\|_{\infty}^{2} T} < \infty,
\end{align*}
where we have used Doob's maximal inequality (second to third line) as well as the fact that the stochastic exponential of a martingale with bounded quadratic variation process is again a martingale; see, for instance, Section~3.5D in \cite{karatzasshreve91}.

Set
\begin{align*}
	& \mathfrak{m}_{+}(t)\doteq \Prb\circ \left(X_{+}(t)\right)^{-1}, & & \mathfrak{m}_{-}(t)\doteq \Prb\circ \left(X_{-}(t)\right)^{-1}, & & t\in [0,T]. &
\end{align*}
Clearly, $\mathfrak{m}_{+}(0) = \nu = \mathfrak{m}_{-}(0)$. By Eq.~\eqref{EqMaxPolMoments} and the dominated convergence theorem, the mappings $[0,T]\ni t\mapsto \mathfrak{m}_{\pm}(t)\in \mathcal{P}(\mathbb{R})$ are continuous. It follows that $\mathfrak{m}_{\pm} \in \mathcal{M}$ and that $M_{\psi}(\mathfrak{m}_{\pm}(t))$ is finite for every $t\in [0,T]$.

We are going to show that
\begin{equation} \label{EqProofOptCond}
	J(0,\nu,u_{+};\mathfrak{m}_{+}) = \inf_{\tilde{u}\in \mathcal{U}_{\nu}} J(0,\nu,\tilde{u};\mathfrak{m}_{+}).
\end{equation}
This will imply the optimality condition of Definition~\ref{DefOLSolution}. Since the mean field condition is satisfied by construction of $\mathfrak{m}_{+}$, it will follow that $(u_{+},\mathfrak{m}_{+})$ is an open-loop solution of the mean field game with initial distribution $\nu$.

By assumption and construction, $M_{\psi}(\nu) = 0 = M_{\psi}\left(\mathfrak{m}_{+}(0)\right)$. By It{\^o}'s formula, the Fubini-Tonelli theorem, the growth conditions on $\sigma$ and $\psi$ and its derivatives, and by Eq.~\eqref{EqMaxPolMoments} and Eq.~\eqref{EqMaxExpMoments}, respectively, we have for all $t\in [0,T]$,
\[
	M_{\psi}\left(\mathfrak{m}_{+}(t)\right) =  0 + \int_{0}^{t} \Mean\left[ b\cdot \psi^{\prime}(X_{+}(s)) + \frac{1}{2}\sigma^{2}(X_{+}(s)) \psi^{\prime\prime}(X_{+}(s)) \right] ds.
\]
By \eqref{EqPsi+}, it follows that $M_{\psi}\left(\mathfrak{m}_{+}(t)\right) > 0$ whenever $t > 0$. As a consequence, the functions $F(\cdot,\mathfrak{m}_{+}(t))$, $t\in (0,T)$, $G(\cdot,\mathfrak{m}_{+}(T))$ are decreasing (with strict decrease for the terminal costs).

Let $\tilde{u}\in \mathcal{U}_{\nu}$. We may assume, since $X_{+}$ is measurable with respect to the filtration generated by the initial condition and the Wiener process, that the strategies $\tilde{u}$, $u_{+}$ are defined on the same stochastic basis with the same driving Wiener process and the same initial condition. Thus, with a slight abuse of notation, $\tilde{u} \cong ((\Omega,\mathcal{F},(\mathcal{F}_{t}),\Prb),\xi,\tilde{u},W)$, $u_{+} \cong ((\Omega,\mathcal{F},(\mathcal{F}_{t}),\Prb),\xi,u_{+},W)$. Let $\tilde{X}$ be the unique solution of Eq.~\eqref{EqDynamics} with $u = \tilde{u}$, and let $X_{+}$ be the unique (strong) solution of Eq.~\eqref{EqDynamics} with $u = u_{+}$, as before. Since $b \geq \tilde{u}(t,\omega)$ for all $(t,\omega)\in [0,T]\times \Omega$, Theorem~1.1 in \cite{ikedawatanabe77} entails that
\begin{equation} \label{EqMonotoneXpm}
	\Prb\left( X_{+}(t) \geq \tilde{X}(t) \text{ for all } t\in [0,T] \right) = 1.
\end{equation}
This implies, in view of the monotonicity of the costs, that
\[
	J(0,\nu,u_{+};\mathfrak{m}_{+}) \leq J(0,\nu,\tilde{u};\mathfrak{m}_{+}),
\]
which yields the optimality condition \eqref{EqProofOptCond} for $(u_{+},\mathfrak{m}_{+})$. The optimality condition for $(u_{-},\mathfrak{m}_{-})$ is established in a completely analogous way by using \eqref{EqPsi-} instead of \eqref{EqPsi+} and the opposite monotonicity of the costs.

We have shown that $(u_{+},\mathfrak{m}_{+})$, $(u_{-},\mathfrak{m}_{-})$ are two open-loop solutions of the mean field game with the same initial distribution. These two solutions are non-equivalent since $M_{\psi}\left(\mathfrak{m}_{+}(t)\right) > 0$ while $M_{\psi}\left(\mathfrak{m}_{-}(t)\right) < 0$ whenever $t > 0$. Moreover, the associated value functions $V(\cdot,\cdot,\mathfrak{m}_{+})$, $V(\cdot,\cdot,\mathfrak{m}_{-})$ are different and non-constant since $G(\cdot,\mathfrak{m}_{+}(T))$ is strictly decreasing while $G(\cdot,\mathfrak{m}_{-}(T))$ is strictly increasing.
\end{proof}

The proof of Proposition~\ref{PropMultSol} also shows that, under the hypotheses of the proposition, there exist two non-equivalent Markov feedback solutions of the mean field game with non-constant value function.

In Section~\ref{SectSimpleNonzero}, we will consider a variant with non-zero running costs of the following simple example with zero running costs:

\begin{exmpl} \label{ExmplThreeSol}
\upshape
 Choose $\Gamma \doteq [-1,1]$ for the space of control actions, $f\equiv F \equiv 0$ as the running costs, and $\sigma(\cdot) \equiv \sigma$ for some constant $\sigma \in [0,\infty)$ as dispersion coefficient. Set $\psi(x)\doteq x$, $x \in \mathbb{R}$; thus $M_{\psi}(\mu) = M(\mu)$ is the mean of $\mu\in \mathcal{P}_{\psi}(\mathbb{R}) = \mathcal{P}_{1}(\mathbb{R})$. Set $G(x,\mu)\doteq -M(\mu)\cdot x$, $x \in \mathbb{R}$, if $\mu\in \mathcal{P}_{1}(\mathbb{R})$. Let $\nu\in \mathcal{P}_{\exp}(\mathbb{R})$ be such that $M_{\psi}(\nu) = 0$. By Proposition~~\ref{PropMultSol}, there exist two non-equivalent solutions of the mean field game with non-constant value function. In addition, there exists a third solution, namely the trivial solution (open-loop or feedback) corresponding to the constant strategy equal to zero; the value function in this case is constant and equal to zero as well. Observe that these three non-equivalent solutions exist for any time horizon $T > 0$, be it small or large.
\end{exmpl}


\subsection{Simple example with regular Hamiltonian}
 \label{SectSimpleNonzero}

In this section, we work with the following data:
\begin{itemize}
	\item set of control actions $\Gamma = [-1,1]$;
	
	\item running costs $f$ given by $f(x,\mu,\gamma) \doteq c_{0}\cdot |\gamma|^{2}$ for some constant $c_{0} > 0$;
	
	\item terminal costs $g(x,\mu) \doteq -M(\mu)\cdot x$;
	
	\item dispersion coefficient $\sigma(\cdot) \equiv \sigma$ for some constant $\sigma \in [0,\infty)$.
\end{itemize}
The associated Hamiltonian $H$ is therefore
\[
	H(p) \doteq \max_{\gamma\in [-1,1]} \left\{-c_{0}\gamma^2 - p\gamma \right\} = -c_{0}\min\left\{ \frac{p^2}{4c_0^2},1\right\} + |p| \cdot \min\left\{ \frac{|p|}{2c_0},1\right\},\quad p\in \mathbb{R}.
\]
Note that it differs from Example \ref{ex_smooth} only by a constant.

For this simple example, the value function associated with a given flow of measures can be computed explicitly. For $M\in \mathbb{R}$, let $\alpha_{M} \in \mathcal{A}$ denote the constant Markov feedback strategy given by
\[
	\alpha_{M} \equiv 
	\sgn(M)\cdot \min\left\{ \frac{|M|}{2c_0},1\right\}.
\]
In particular, $\alpha_{0}\equiv 0$.

Let $\mathfrak{m}\in \mathcal{M}$. Set $M\doteq M(\mathfrak{m}(T))$. Then, for all $(t,x)\in [0,T]\times \mathbb{R}$,
\begin{align*}
	J(t,\delta_{x},\alpha_{M};\mathfrak{m}) &{ = c_{0}\int_{0}^{T-t} \left(\min\left\{ \frac{|M|}{2c_0},1\right\}\right)^{2} dt - M\cdot \left(x + \int_{0}^{T-t} \sgn(M) \left(\min\left\{ \frac{|M|}{2c_0},1\right\}\right) dt \right) } \\
	&= -M\cdot x + (T-t)\left(c_{0}\min\left\{ \frac{M^2}{4c_0^2},1\right\} - |M| \cdot \min\left\{ \frac{|M|}{2c_0},1\right\} \right).
\end{align*}
The function $(t,x)\mapsto J(t,\delta_{x},\alpha_{M};\mathfrak{m})$ satisfies the Hamilton-Jacobi-Bellman equation \eqref{EqProbHJB} with terminal condition $g(\cdot,\mathfrak{m}(T))$. It follows that $V(t,x,\mathfrak{m}) = J(t,\delta_{x},\alpha_{M};\mathfrak{m})$; cf.\ Remark~\ref{RemVerification}. In particular, $V(\cdot,\cdot,\mathfrak{m}) \equiv 0$ if $M = 0$.

Let $\nu \in \mathcal{P}_{1}(\mathbb{R})$. Let $X^{\alpha_{M}}$ be a solution of Eq.~\eqref{EqDynamicsFb} with feedback strategy $\alpha = \alpha_{M}$, initial distribution $\nu$ and initial time $t_{0} = 0$. Then
\[
	\Mean\left[ X^{\alpha_{M}}(T) \right] = M(\nu) + \sgn(M)\cdot T\cdot \min\left\{ \frac{|M|}{2c_0},1\right\}.
\]
Observe that $X^{\alpha_{M}}$ is the unique in law optimal process for the flow of measures $\mathfrak{m}$, where $M = M(\mathfrak{m}(T))$. Define the flow of measures $\mathfrak{m}_{\nu,M}$ by
\[
	\mathfrak{m}_{\nu,M}(t)\doteq \Law\left(X^{\alpha_{M}}(t)\right),\quad t\in [0,T].
\]
Notice that $\mathfrak{m}_{\nu,M}$ depends only on $\nu$ and the value of $M$. In view of the mean field condition in Definition~\ref{DefFbSolution} (or Definition~\ref{DefOLSolution}), it follows that $\mathfrak{m}$ is the flow of measures of a solution of the mean field game with initial distribution $\nu$ if and only if $\mathfrak{m} = \mathfrak{m}_{\nu,M}$ and
\begin{equation} \label{EqSimpleConsistency}
	M(\nu) = M - \sgn(M)\cdot T\cdot \min\left\{ \frac{|M|}{2c_0},1 \right\}.
\end{equation}
In this case, the solution is given by $(\alpha_{M},\mathfrak{m}) = (\alpha_{M},\mathfrak{m}_{\nu,M})$. The number of solutions of the mean field game with initial distribution $\nu$ is therefore equal to the number of solutions of Eq.~\eqref{EqSimpleConsistency} as $M$ varies over $\mathbb{R}$. We distinguish the following cases:
\begin{enumerate}[a)]
	\item Eq.~\eqref{EqSimpleConsistency} has at most one solution with $M = 0$, namely one if $M(\nu) = 0$, else zero.
	
	\item $|M| \in (0,2c_{0})$: In this domain, Eq.~\eqref{EqSimpleConsistency} has at most one solution if $T \neq 2c_{0}$, namely one if $M(\nu) = (1-\frac{T}{2c_{0}})M$, else zero. If $T = 2c_{0}$, then Eq.~\eqref{EqSimpleConsistency} has infinitely many solutions if $M(\nu) = 0$, else zero.
	
	\item $|M| \geq 2c_{0}$: In this domain, Eq.~\eqref{EqSimpleConsistency} has at most one solution, namely one if $M(\nu) = M - \sgn(M)T$, else zero.
	
	
\end{enumerate}

The number of solutions of the mean field game thus depends on the time horizon $T$; the critical value is $T = 2c_{0}$:

\paragraph{Small time horizon:} $T < 2c_{0}$. In this case, the mean field game possesses exactly one solution with initial distribution $\nu$. The unique solution is given by $(\alpha_{M},\mathfrak{m}_{\nu,M})$ with
\[
	M = \begin{cases}
		M(\nu)/\left(1 - \frac{T}{2c_{0}} \right) &\text{if } |M(\nu)| \in [0,2c_{0}-T), \\
		M(\nu) + \sgn(M(\nu))T &\text{if } |M(\nu)| \geq 2c_{0}-T.
	\end{cases}
\]

\paragraph{Critical time horizon:} $T = 2c_{0}$. In this case, the mean field game possesses either one or infinitely many solutions, depending on the initial distribution $\nu$: There are infinitely many solutions if $M(\nu) = 0$, namely the solutions corresponding to any $M \in [-2c_{0},2c_{0}]$; there is exactly one solution if $M(\nu) \neq 0$, namely the solution corresponding to $M = M(\nu) + \sgn(M(\nu))T$.

\paragraph{Large time horizon:} $T > 2c_{0}$. In this case, the mean field game possesses one, two, or three non-equivalent solutions, depending on the initial distribution $\nu$: There are exactly three solutions if $|M(\nu)| < T - 2c_{0}$, namely the solutions corresponding to $M\in \{M(\nu)/(1 - \frac{T}{2c_{0}}), M(\nu)+T, M(\nu)-T\}$. There are exactly two solutions if $|M(\nu)| = T - 2c_{0}$, namely the solutions corresponding to $M\in \{M(\nu)+T, M(\nu)-T\}$. There is exactly one solution if $|M(\nu)| > T - 2c_{0}$, namely the solution corresponding to $M = M(\nu) + \sgn(M(\nu))T$.


\section{Comparing the examples of non-uniqueness with some uniqueness results}
\label{uniqueness}
In this section we assume for simplicity that $\s$ is constant, and then we can consider without loss of generality 
$$ \frac 12 \s^2(x)=1 .$$
First we state a uniqueness result under the classical monotonicity condition on the costs of Lasry and Lions \cite{LL} and see that such condition is essentially sharp in view of the results of Sections \ref{section pde} and \ref{SectProb}. In the following section we prove a 
uniqueness theorem under general assumptions 
 on the costs and merely smoothness of the Hamiltonian, provided the time horizon $T$ is short enough, and compare it with our examples of multiple solutions for all horizons $T>0$.

\subsection{Uniqueness under monotonicity conditions}
\label{monotonicity}

In the next theorem we will assume, for all measures $\mu, \nu\in \mathcal{P}_{1}(\mathbb{R})$ admitting a $C^2$ density
  \begin{equation}\label{monoG}
  \int_\R (G(x,\mu) - G(x,\nu) ) d(\mu -\nu)(x) \geq 0 , 
  \end{equation}
    \begin{equation}\label{monoF}
  \int_\R (F(x,\mu) - F(x,\nu)) d(\mu -\nu)(x) ) > 0 , \quad \text{if } \; M(\mu)\ne M(\nu) ,
  \end{equation}
    \begin{equation}
    \label{FG3}
  \text{for any fixed } \mu , \; x\mapsto F(x,\mu) \;\text{ and } x\mapsto G(x,\mu) \; \text{ depend only on }\; M(\mu) ,
    \end{equation}
    \begin{equation}
    \label{FG4}
  |F(x,\mu)| + |G(x,\mu)| \leq C_R(1+|x|) , \quad \text{if } \; \int_\R |x| d\mu(x) \leq R .
      \end{equation}
      We call the property \eqref{monoG} {\em weak monotonicity} of $G$.
     \begin{thm}
     \label{th:uniq}
Assume \eqref{monoG}, \eqref{monoF}, \eqref{FG3}, \eqref{FG4}, $\s$ constant, $H$ 
 convex and Lipschitz, and $\nu \,dx\in 
{ \mathcal{P}}_1(\R)$.  Let $(v_1, m_1), (v_2, m_2)$ be two classical solutions of \eqref{pde} such that $v:=v_1-v_2$ has  
$v_t, v_x$, and $v_{xx}$ bounded by $C(1+|x|)$. Then  $v_1
 = v_2
 $ and $m_1
  = m_2
  $  
  in $[0,T]\times \R$. 
 \end{thm}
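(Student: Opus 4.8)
The plan is to use the classical Lasry--Lions duality argument adapted to this setting with unbounded domain and growing solutions. First I would set $v := v_1 - v_2$ and $m := m_1 - m_2$, and write down the equations they satisfy: subtracting the two Hamilton--Jacobi--Bellman equations gives
\[
-v_t + H((v_1)_x) - H((v_2)_x) = v_{xx} + F(x,m_1(t)) - F(x,m_2(t)),
\]
with terminal data $v(T,x) = G(x,m_1(T)) - G(x,m_2(T))$, and subtracting the two Kolmogorov equations gives
\[
m_t - \bigl(H'((v_1)_x)m_1 - H'((v_2)_x)m_2\bigr)_x = m_{xx}, \qquad m(0,x) = 0.
\]
The key computation is to differentiate $t \mapsto \int_{\R} v(t,x)\, m(t,x)\, dx$ in time, substitute the two PDEs, integrate by parts, and collect terms. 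The boundary terms at $x = \pm\infty$ vanish thanks to the growth bounds on $v, v_t, v_x, v_{xx}$ (at most $C(1+|x|)$) together with $m_i \in \mathcal{P}_1(\R)$ and the fact that $m = m_1 - m_2$ integrates to zero and has finite first moment with rapidly decaying tails (parabolic smoothing). I would need to justify this integration by parts carefully, which is where the growth hypotheses on $v$ are used.

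After the integration by parts, the bulk term should organize into the Lasry--Lions expression: at each fixed $t$, one gets a sum of two pieces. The first piece, coming from the Hamiltonian terms, is
\[
\int_{\R} \Bigl[ \bigl(H((v_1)_x) - H((v_2)_x) - H'((v_1)_x)((v_1)_x - (v_2)_x)\bigr) m_1 + \bigl(H((v_2)_x) - H((v_1)_x) - H'((v_2)_x)((v_2)_x - (v_1)_x)\bigr) m_2 \Bigr] dx,
\]
which is nonnegative because $H$ is convex and $m_1, m_2 \geq 0$ (each bracket is a nonnegative Bregman-type remainder). The second piece is the cost term, which after using the structure comes out as $\int_{\R}(F(x,m_1(t)) - F(x,m_2(t)))\, d(m_1(t) - m_2(t))(x)$, and one also picks up the terminal contribution $\int_{\R}(G(x,m_1(T)) - G(x,m_2(T)))\, d(m_1(T) - m_2(T))(x) \geq 0$ by \eqref{monoG}. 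Integrating the time-derivative identity from $0$ to $T$ and using $m(0,\cdot) = 0$ so that the boundary term at $t=0$ vanishes, all contributions have a definite sign and must therefore all be zero: in particular $\int_0^T \int_{\R}(F(x,m_1(t)) - F(x,m_2(t)))\, d(m_1(t)-m_2(t))(x)\, dt = 0$.

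From the strict monotonicity \eqref{monoF} this forces $M(m_1(t)) = M(m_2(t))$ for a.e. $t$, hence for all $t$ by continuity. Then \eqref{FG3} says $F(\cdot, m_i(t))$ and $G(\cdot, m_i(T))$ depend only on the common mean, so the two HJB equations for $v_1, v_2$ have identical right-hand sides and identical terminal data; the same Hamiltonian-convexity term being zero combined with strict convexity of $H$ on the relevant range (or, more robustly, a direct uniqueness argument for the now-decoupled HJB equation with fixed source) gives $v_1 = v_2$. With $v_1 = v_2$, the drift $H'((v_i)_x)$ is the same in both Kolmogorov equations, so $m := m_1 - m_2$ solves a linear Fokker--Planck equation $m_t - (H'(v_x) m)_x = m_{xx}$ with zero initial data, whence $m \equiv 0$ by uniqueness for that linear parabolic problem (in the class of integrable solutions).

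The main obstacle I anticipate is the justification of the integration by parts and the vanishing of all spatial boundary terms: $v$ and its derivatives are only controlled by $C(1+|x|)$ while $m_1, m_2$ are merely in $\mathcal{P}_1(\R)$, so products like $v_x m$ are borderline integrable and one must exploit the genuine (Gaussian-type) tail decay of solutions to the Fokker--Planck equation with Lipschitz drift to close the estimate; a secondary subtlety is handling the non-smoothness of $H$ (only convex and Lipschitz, not $C^1$) in the Bregman remainder, which should be dealt with by using subgradients $H'((v_i)_x)$ that exist a.e. and the convexity inequality in the form $H(q) \geq H(p) + H'(p)(q-p)$, valid for the a.e.-defined derivative.
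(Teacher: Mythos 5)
Your proposal is correct and follows essentially the same route as the paper's proof, namely the Lasry--Lions duality argument: cross-multiply the equation for $m$ by $v$ and the equation for $v$ by $m$, integrate by parts (the paper uses an explicit spatial cutoff $\zeta_r$ and lets $r\to\infty$ using \eqref{FG4} and the linear growth of $v$ and its derivatives, which is exactly the justification you anticipated), use convexity of $H$ and \eqref{monoG} to deduce $\int_0^T\int_\R (F(x,m_1)-F(x,m_2))\,d(m_1-m_2)\,dt\le 0$, then \eqref{monoF} to get $M(m_1(t))=M(m_2(t))$, and finally \eqref{FG3} together with uniqueness for the now-decoupled HJB and KFP equations. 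One cosmetic remark: as written your two Bregman-type brackets are nonpositive rather than nonnegative (the sign is absorbed elsewhere in the identity), but this does not affect the argument.
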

  \begin{proof} We only make some little variants to the proof of Theorem 3.6 of \cite{Car}. Using the representation of $m_i$ as the law of a diffusion process with drift bounded by $\|H'\|_\infty$, as in the proof of Theorem \ref{T1}, by standard properties of the Ito integral we get
  \[
  \int_\R |x| dm_i(t,x) \leq \int_\R |x| d\nu(x) + T\|H'\|_\infty + |\s|\sqrt T .
  \]
       Then $m_i(t, x) \,dx\in { \mathcal{P}}_1(\R)$ for all $t$.
       
  We set 
  $m:=m_1-m_2$.  We write the equation 
   for $m$
  \[
  m_t - m_{xx} -\text{div}(m_1H'((v_1)_x) - m_2H'((v_2)_x) = 0 ,
  \]
 multiply it by  $v \zeta_r$, where $\zeta_r$ is a cutoff function in 
 space with $\zeta_r(x)=1$ for $|x|\leq r$, and integrate by parts. Then we take the equation  
  for $v$
  \[
   -v_t+H((v_1)_x) -H((v_2)_x)= v_{xx}+F(x,m_1)-F(x,m_2) ,
  \]
  multiply it by $m$, integrate in time and space 
  and add it to the previous one. Next we let $r\to\infty$, which is possible by \eqref{FG4} and the growth conditions on the derivatives of $v$. As in \cite{Car}, by \eqref{monoG} and the convexity of $H$ we get
  \[
 \int_0^T \int_\R (F(x,m_1(t)) - F(x,m_2(t)) d(m_1-m_2)(x) \, dt \leq 0 .
  \]
Then   \eqref{monoF} implies $M(m_1(t))=M(m_2(t))$ for all $t$. Now by \eqref{FG3} and the uniqueness results for HJB equations we obtain $v_1=v_2$. We plug this into the KFP equation and finally get $m_1=m_2$.
   \end{proof}
 \begin{rem}
 \upshape	
 Some variants of this theorem and more details on the proof can be found in \cite{Far}. 
 \end{rem}
\begin{cor} 
\label{comparison} 
 Assume  the Hamiltonian  is given by \eqref{Ha}, $\s$ is constant, and the costs are
       \begin{equation}
    \label{costs_ex}
   F(x,\mu)= \a x M(\mu) + f(\mu) , \quad   G(x,\mu)= \beta xM(\mu) + g(\mu) ,
         \end{equation}  
with $f, g : \tilde{ \mathcal{P}}_1(\R) \to \R$  continuous for $d_1$.   Then 
             \begin{enumerate}
         \item [i)] there is at most one classical solution of \eqref{pde}  with 
         derivatives bounded by $C(1+|x|)$ if $\nu \,dx\in { \mathcal{P}}_1(\R)$
          and 
 \[        \a > 0 , \quad \beta \geq 0 \,;
 \]
    \item [ii)] there are two distinct classical solutions of \eqref{pde} with 
    derivatives bounded by $C(1+|x|)$ if $\nu \in \tilde{ \mathcal{P}}_1(\R)$, $M(\nu)=0$, 
    and 
 \[        \a \leq 0 , \quad \beta < 0. \]
         \end{enumerate}
     \end{cor}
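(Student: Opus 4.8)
The plan is to derive both assertions directly from results already in hand: part~(i) from the monotonicity uniqueness Theorem~\ref{th:uniq}, and part~(ii) from the non-uniqueness construction of Theorem~\ref{T1}. In both cases the substance of the argument is just the verification that the explicit costs \eqref{costs_ex} meet the respective hypotheses; for~(ii) there is one extra point, namely that the solutions produced have derivatives of at most linear growth.

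For~(i) I would start from the two elementary identities obtained from $\int_\R d(\mu-\nu)=0$ and $\int_\R x\,d(\mu-\nu)(x)=M(\mu)-M(\nu)$, namely
\[
\int_\R\bigl(F(x,\mu)-F(x,\nu)\bigr)\,d(\mu-\nu)(x)=\a\,\bigl(M(\mu)-M(\nu)\bigr)^2,\qquad
\int_\R\bigl(G(x,\mu)-G(x,\nu)\bigr)\,d(\mu-\nu)(x)=\beta\,\bigl(M(\mu)-M(\nu)\bigr)^2 .
\]
With $\a>0$ and $\beta\ge 0$ these give the strict monotonicity \eqref{monoF} (when $M(\mu)\ne M(\nu)$) and the weak monotonicity \eqref{monoG}. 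Condition \eqref{FG3} is immediate from the form of $F$ and $G$. For \eqref{FG4} I would argue along the solutions: by the moment estimate at the start of the proof of Theorem~\ref{th:uniq}, $\int_\R|x|\,dm_i(t,x)\le R$ uniformly in $t\in[0,T]$, and $t\mapsto m_i(t)$ is $d_1$-continuous, hence has $d_1$-compact image, so $f(m_i(t))$ and $g(m_i(t))$ stay bounded and $|F(x,m_i(t))|+|G(x,m_i(t))|\le C(1+|x|)$ — which is all \eqref{FG4} is used for in that proof. Since $H$ from \eqref{Ha} is convex and Lipschitz, $\s$ is constant, $\nu\,dx\in\mathcal{P}_1(\R)$, and the difference of two solutions with derivatives bounded by $C(1+|x|)$ again enjoys this bound, Theorem~\ref{th:uniq} applies and gives $v_1=v_2$, $m_1=m_2$.

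For~(ii) I would compute $D_xF(x,\mu)=\a M(\mu)$ and $D_xG(x,\mu)=\beta M(\mu)$; since $\mu\mapsto M(\mu)$, $f$, $g$ are $d_1$-continuous and $x\mapsto F(x,\mu),G(x,\mu)$ are affine, (F1) and (G1) hold (Lipschitz and at most linear growth in $x$ on sets of bounded mean, the additive terms being controlled as above). Because $\a\le 0$ and $\beta<0$ one has $M(\mu)D_xF(x,\mu)=\a M(\mu)^2\le 0$, $M(\mu)D_xG(x,\mu)=\beta M(\mu)^2\le 0$, and for $M(\mu)\ne 0$ the function $D_xG(\cdot,\mu)\equiv\beta M(\mu)$ is a non-zero constant, so $D_xG(\cdot,\mu)\not\equiv 0$; hence (FG2) holds. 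With $H$ as in \eqref{Ha}, $a<0<b$, $\s$ constant, and $M(\nu)=0$, Theorem~\ref{T1} yields two classical solutions $(v_1,m_1)$, $(v_2,m_2)$ with $M(m_1(t))=bt$ and $M(m_2(t))=at$, which differ for $t>0$ since $a<0<b$. To obtain the derivative bounds I would then note that for a fixed flow $m(\cdot)$ the source term $\a x\,M(m(t))+f(m(t))$ and terminal datum $\beta x\,M(m(T))+g(m(T))$ in \eqref{v1} (and likewise in \eqref{v2}) are affine in $x$ with coefficients bounded in $t$, so the ansatz $v(t,x)=A(t)x+B(t)$ with $A,B$ solving linear ODEs solves the Cauchy problem; by uniqueness of the classical solution in the linear-growth class this affine function is exactly the one produced in Theorem~\ref{T1}, whence $v_x=A(t)$ and $v_{xx}=0$ are bounded and $v_t=A'(t)x+B'(t)$ grows at most linearly.

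Overall this is bookkeeping rather than a substantial new argument; the single slightly delicate point is making precise condition \eqref{FG4} (and the polynomial-growth clause of (F1)) for merely $d_1$-continuous $f$, $g$, which is handled by restricting to the $d_1$-compact image of the flows $t\mapsto m_i(t)$. Alternatively one could impose boundedness of $f,g$ on sets of bounded first moment, with no effect on the examples of interest.
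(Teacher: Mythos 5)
Your proposal is correct and follows essentially the same route as the paper: part (i) by the identity $\int_\R\bigl(F(x,\mu)-F(x,\nu)\bigr)\,d(\mu-\nu)(x)=\a\bigl(M(\mu)-M(\nu)\bigr)^2$ (and likewise for $G$) feeding into Theorem~\ref{th:uniq}, and part (ii) by checking (F1), (G1), (FG2) for the linear costs and invoking Theorem~\ref{T1}. Your extra care with \eqref{FG4} for merely $d_1$-continuous $f,g$ (restricting to the $d_1$-compact image of $t\mapsto m_i(t)$) and the explicit affine ansatz $v=A(t)x+B(t)$ for the derivative bounds merely make precise what the paper asserts in one line via \eqref{v1}, \eqref{v2} and the boundedness of $M(m_i(t))$.
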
 
 \begin{proof}   {\em i)}  First observe that
 \[
 \int_\R (f(\mu) - f(\nu))
 (\mu -\nu)(x) \, dx= 0
 \]
 because $ \int_\R  \mu(x) \, dx= \int_\R  \nu(x) \, dx =1$.
 Then for the costs of the form \eqref{costs_ex} we 
 compute 
  \[
  \int_\R (F(x,\mu) - F(x,\nu))d(\mu -\nu)(x) = \a(M(\mu) -M(\nu))   \int_\R x d(\mu -\nu)
   = \a (M(\mu) -M(\nu)) ^2 ,
  \]
  so  \eqref{monoF} is satisfied if $\a>0$. Similarly, $G$ satisfies \eqref{monoG} if $\beta\geq 0$. Then we get the conclusion by Theorem \ref{th:uniq}.

    {\em ii)} This comes from Theorem \ref{T1} in view of Example \ref{ese1}. The growth condition on the solutions $v_i$ follows from the equations \eqref{v1} and \eqref{v2}, by \eqref{FG4} and the boundedness of $M(m_i(t))$.
\end{proof}    
\begin{rem}
 \upshape	

 The last result can be modified to cover possibly smooth Hamiltonians satisfying merely \eqref{H2} instead of  \eqref{Ha}. In fact, statement {\em i)} remains true if $H$ is convex, whereas {\em ii)} holds for $T>\max\{\frac\d {b\beta}, \frac\d {|a|\beta}\}$ by Theorem \ref{T2} and Example \ref{ese2}.
    \end{rem}
\begin{rem}
\label{crowd}  
 \upshape	
The assumptions in the two cases of the Corollary have the following simple interpretation in terms of aversion or not to the crowd. The term $x M(\mu)$ is positive if the position $x$ of a representative individual in the population is on the same side of the origin as the expected value of the population distribution. Then $\a > 0$ and $\beta>0$ mean that the individual pays a cost for such a position, whereas he has a gain if he stays on the opposite side. This models a form of aversion to crowd, whereas the case $\a < 0$ and $\beta<0$  corresponds to a reward for having positions on the same side as the mean position of the population. The fact that the monotonicity conditions for uniqueness are related to crowd-aversion is well known \cite{GLL}, and there are examples of multiple solutions when imitation is rewarding in stationary MFG \cite{Gu, B}. The present example seems to be the first for evolutive MFG PDEs, together with those in the very recent papers \cite{BriCar} and \cite {CT}.
\end{rem}

\begin{rem} \upshape Note  that in the local case with costs of the form 
 $F(x,\mu)= f(x) \mu(x)$ 
 the crucial quantity for uniqueness is the sign of $f$ ($f>0$ implies uniqueness), whereas in the non-local case
  $F(x,\mu)= f(x)M(\mu)$ 
   what seems to count is the sign of $f'$, because  $f'<0$ implies non-uniqueness.
   \end{rem}
  %


\subsection{Uniqueness for short time horizon} 
\label{short}
In this section we assume 
$ \frac 12 \s^2(x)=1$. On the other hand we allow any space dimension $d\geq 1$. We consider the MFG system 
\begin{equation}\label{mfg-sys}
\left\{
\begin{array}{lll}
-v_t + H(Dv) = \Delta v
+ F(x, m(t,\cdot))  \quad 
 \text{ in  }(0, T)\times \R^d, \quad & v(T,x)=G(x, m(T))) ,\\ \\
m_t - \text{div}(
DH(Dv) m) = \Delta m
  \quad 
\text{ in  }(0, T)\times \R^d, \quad\ & m(0,x)=\nu(x) ,
 \end{array}
\right.\,
\end{equation}
where $Dv
=\nabla_x v
$ denotes the gradient of $v$ with respect to the space variables, $\Delta$ is the Laplacian with respect to the space variables $x$, and $DH=\nabla _pH$ is the gradient of the Hamiltonian. Our main assumptions are the smoothness of the Hamiltonian and a 
Lipschitz continuity of the costs in the norm $\|\cdot\|_2$ of $L^2(\R^d)$ that we state next. Define
\[
\tilde{ \mathcal{P}}(\R^d) := \{\mu\in L^\infty(\R^d) : \mu\geq 0,\; \int_\R \mu(x) dx = 1 
\},
\]
and note that $\tilde{ \mathcal{P}}(\R^d)\subseteq L^2(\R^d)$.
We will assume $F, G : \tilde{\mathcal{P}}
(\mathbb{R}^d)
 \to \R$ satisfy, for all $\mu, \nu$, 
\begin{equation}\label{Flip}
\|F(\cdot,\mu)-F(\cdot,\nu)\|_2^2 \leq L_F \|\mu-\nu\|_2^2 , 
\end{equation}
\begin{equation}\label{Glip}
\|DG(\cdot,\mu)-DG(\cdot,\nu)\|_2^2 \leq L_{
G} \|\mu-\nu\|_2^2 , 
\end{equation}
The next theorem 
is a variant and an extension of a result presented by Lions in \cite{L}, see Remark \ref{plions} for a comparison.
\begin{thm}
\label{st-uniq} 
Assume $H\in C^2(\R^d)$, \eqref{Flip}, \eqref{Glip}, $\nu\in \tilde{ \mathcal{P}}
(\R^d)$, and $(v_1, m_1), (v_2, m_2)$ are two classical solutions of \eqref{mfg-sys} 
such that $D(v_1-v_2)\in L^2([0,T]\times \R^d)$.
Suppose also that either

\noindent (i)  $|DH| \leq C_H$ and $ |D^2 H|\leq \bar C_H$, or

\noindent (ii) $|Dv_i|\leq K$, $i= 1, 2$.

\noindent
Then there exists $\bar T>0$ such that, if $T<\bar T$, $v_1(t, \cdot) = v_2(t, \cdot)$ and $m_1(t, \cdot) = m_2(t, \cdot)$ for all $t\in[0,T]$. Moreover $\bar T$ depends only on $d, L_F, L_G, \|\nu\|_\infty,$  $C_H,$ and $\bar C_H$ in case (i), and only on $d, L_F, L_G, \|\nu\|_\infty$, $\sup_{|p|\leq K}|DH(p)|$, and $\sup_{|p|\leq K}|D^2H(p)|$ in case (ii).
 \end{thm}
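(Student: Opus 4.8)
The plan is to run a coupled energy estimate for the differences $v\doteq v_1-v_2$ and $m\doteq m_1-m_2$. The key point is that the hypothesis $Dv\in L^2([0,T]\times\R^d)$ guarantees $\Phi\doteq\int_0^T\|Dv(t,\cdot)\|_2^2\,dt<\infty$, so that it suffices to prove $\Phi\le\kappa(T)\Phi$ with $\kappa(T)<1$ for $T$ small, which forces $\Phi=0$; the conclusion $m_1\equiv m_2$, $v_1\equiv v_2$ then follows. To linearize, subtract the two HJB equations of \eqref{mfg-sys} and write $H(Dv_1)-H(Dv_2)=b(t,x)\cdot Dv$ with $b\doteq\int_0^1 DH\bigl(sDv_1+(1-s)Dv_2\bigr)\,ds$; this turns the first equation into a backward linear parabolic equation for $v$ with terminal datum $G(\cdot,m_1(T))-G(\cdot,m_2(T))$. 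Subtract the two Kolmogorov equations and write $DH(Dv_1)m_1-DH(Dv_2)m_2=DH(Dv_1)\,m+A(t,x)\,Dv\,m_2$ with $A\doteq\int_0^1 D^2H\bigl(sDv_1+(1-s)Dv_2\bigr)\,ds$; this turns the second equation into a forward linear parabolic equation for $m$ with null initial datum. Under (i) one has $|b|\le C_H$, $|A|\le\bar C_H$; under (ii), since the segment $sDv_1+(1-s)Dv_2$ stays in $\{|p|\le K\}$, one has $|b|\le\sup_{|p|\le K}|DH(p)|$, $|A|\le\sup_{|p|\le K}|D^2H(p)|$. Write $C$, $\bar C$ for the two relevant bounds in both cases. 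Since the Kolmogorov drift $-DH(Dv_i)$ is then bounded by $C$, the a priori bound on the density proved in the Appendix yields $\Lambda\doteq\max_i\sup_{t\le T}\|m_i(t,\cdot)\|_\infty<\infty$, with $\Lambda$ depending only on $d$, $\|\nu\|_\infty$, $T$, $C$.

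For the $m$-equation I would multiply by $m$ and integrate by parts in $x$ (legitimate because $m_i\in L^\infty\cap L^1\subset L^2$), estimate $\int Dm\cdot DH(Dv_1)\,m$ and $\int Dm\cdot A\,Dv\,m_2$ by Young's inequality, using $\|m_2(t,\cdot)\|_\infty\le\Lambda$ so as to absorb $\|Dm(t,\cdot)\|_2^2$, use $m(0,\cdot)=0$, and apply Gronwall's lemma. This gives
\[
\sup_{t\le T}\|m(t,\cdot)\|_2^2\ \le\ C_1\,\Phi,\qquad C_1\doteq 2\bar C^2\Lambda^2 e^{2C^2T}.
\]
For the $v$-equation I would multiply by $-\Delta v$ and integrate by parts in $x$, so that $\tfrac12\frac{d}{dt}\|Dv(t,\cdot)\|_2^2$ appears together with the good term $\|\Delta v(t,\cdot)\|_2^2$; estimate $\int(b\cdot Dv)\Delta v$ and, using \eqref{Flip}, $\int\bigl(F(\cdot,m_1)-F(\cdot,m_2)\bigr)\Delta v$ by Young's inequality, absorbing both into the good term; then integrate backward in time from $t$ to $T$, using $Dv(T,\cdot)=DG(\cdot,m_1(T))-DG(\cdot,m_2(T))$ together with \eqref{Glip}. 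Note that multiplying by $-\Delta v$ is exactly what keeps $F$ undifferentiated (so only \eqref{Flip} is used) and puts the extra derivative on $v$ (so only \eqref{Glip} on $DG$ is used). Feeding in the $m$-estimate ($\|m(T,\cdot)\|_2^2\le C_1\Phi$ and $\int_t^T\|m(s,\cdot)\|_2^2\,ds\le TC_1\Phi$) and applying Gronwall once more gives $\|Dv(t,\cdot)\|_2^2\le(L_G+L_FT)C_1 e^{C^2T}\Phi$ for all $t$, whence, integrating in $t$,
\[
\Phi\ \le\ \kappa(T)\,\Phi,\qquad \kappa(T)\doteq 2T\,(L_G+L_FT)\,\bar C^2\Lambda^2\,e^{3C^2T}.
\]
Since $\kappa(T)\to 0$ as $T\to 0^+$ (all other factors being bounded for, say, $T\le 1$), there is $\bar T>0$ with $\kappa(T)<1$ for $T<\bar T$; tracking the constants gives the dependence of $\bar T$ asserted in the theorem. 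As $\Phi<\infty$, this forces $\Phi=0$, hence $Dv\equiv 0$; then $\sup_t\|m(t,\cdot)\|_2^2\le C_1\Phi=0$, i.e.\ $m_1\equiv m_2$; and then $Dv_1\equiv Dv_2$ and $F(\cdot,m_1)\equiv F(\cdot,m_2)$, so the HJB difference reduces to $\partial_t(v_1-v_2)=0$, which with $(v_1-v_2)(T,\cdot)=G(\cdot,m_1(T))-G(\cdot,m_2(T))=0$ gives $v_1\equiv v_2$.

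The step I expect to be the main obstacle is the bookkeeping in this coupled Gronwall scheme. The $m$-estimate supplies the factor $\Lambda^2$ — controlled only through the Appendix's probabilistic $L^\infty$ bound on the densities, which is the single external ingredient — and supplies no smallness in $T$; to compensate, the $v$-estimate must be run \emph{backward} in time and organized so that every occurrence of $\|m\|_2^2$ entering it is multiplied either by the terminal Lipschitz constant $L_G$ or by an explicit power of $T$. This is precisely what makes $\kappa(T)$ vanish as $T\to 0$, and it also shows that the same conclusion holds for \emph{any} $T$ whenever $\bar C$ (equivalently the Lipschitz constant of $DH$), or $L_F$, or $L_G$ is sufficiently small, matching Remarks~\ref{C11}--\ref{C12}.
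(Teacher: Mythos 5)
Your proposal follows the paper's proof in all essentials: the same linearizations $H(Dv_1)-H(Dv_2)=b\cdot Dv$ and $DH(Dv_1)m_1-DH(Dv_2)m_2=DH(Dv_1)\,m+A\,Dv\,m_2$, the same appeal to the Appendix for the $L^\infty$ bound on the densities (which is indeed the only external ingredient), the same pair of forward/backward $L^2$ energy--Gronwall estimates coupled into a fixed-point inequality that forces $Dv\equiv 0$ for small $T$, and the same bookkeeping ensuring every occurrence of $\|m\|_2^2$ is paired with $L_G$, $L_F$, or a power of $T$. Closing the loop on $\int_0^T\|Dv(t,\cdot)\|_2^2\,dt$ rather than on $\sup_t\|Dv(t,\cdot)\|_2^2$ is an immaterial variation, and your direct deduction of $v_1\equiv v_2$ at the end is fine.

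The one substantive deviation is your choice of multiplier for the HJB difference, and it carries a cost on the whole space. The paper first differentiates the linearized equation in $x$ and tests the resulting system for $w=Dv$ against $w_j\zeta_R^2$; every cutoff boundary term then involves only $w$, $Dw$ (absorbed into the good term via Young) and $\bar F$, all of which are controlled by the hypothesis $D(v_1-v_2)\in L^2([0,T]\times\R^d)$ together with \eqref{Flip}, so the terms carrying $D\zeta_R$ vanish as $R\to\infty$. Your multiplier $-\Delta v$ reaches the same differential inequality formally, but on $\R^d$ the integration by parts
\[
\int v_t\,\Delta v\;\zeta_R^2\,dx=-\frac12\frac{d}{dt}\int|Dv|^2\zeta_R^2\,dx-\int v_t\,Dv\cdot D\zeta_R^2\,dx
\]
produces a boundary term on the annulus $\{R\le|x|\le2R\}$ containing $v_t$ (equivalently, after substituting the equation, $\Delta v$), and neither $v_t$ nor $\Delta v$ is assumed square-integrable, so the passage $R\to\infty$ does not close under the stated hypotheses without an additional bootstrap; this is precisely why the paper differentiates the equation before testing. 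In the periodic setting of Remark~\ref{plions}, or under the alternative hypothesis $(v_1-v_2)_t\in L^2$ mentioned in Remark~\ref{rmk:refs}, your multiplier works as written. Everything else --- the $m$-estimate, the use of \eqref{Flip} and \eqref{Glip} without ever differentiating $F$ or putting more than one derivative on $G$, the treatment of cases (i) and (ii) via the convexity of $\{|p|\le K\}$, the smallness of $\kappa(T)$, and the observations matching Remarks~\ref{C11}--\ref{C12} --- is sound and agrees with the paper.
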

 \begin{proof}
  \noindent {\em Step 0.}  We first use the density estimate of Proposition \ref{prop_app} in the Appendix to get that $m_i(t,\cdot)\in  \tilde{ \mathcal{P}}(\R^d)$ for all $t$ and
 \begin{equation}
 \label{est_m_bis}
  \|m_i(t,\cdot)\|_\infty\leq C\, \|\nu\|_\infty ,  \quad i=1, 2, \;\forall \,t\in[0,T],
 \end{equation}
 where the constant $C = C(T,\|DH(Dv_i)\|_\infty, d)$ is bounded for bounded entries. By known properties of the Fokker-Planck equation for $m_i$, we have that $m_i(t,\cdot)\in \tilde{ \mathcal{P}}$ for all $t$ and then
 \[
 \sup_{0\leq t\leq T}  \|m_i(t,\cdot)\|_2\leq  \sup_{0\leq t\leq T}  \|m_i(t,\cdot)\|_\infty <+\infty .
 \]
 %
 
\noindent
  {\em Step 1.} Define $v:=v_1-v_2$, $m:=m_1-m_2$, $B(t,x):=\int_0^1DH(Dv_2+s(Dv_1-Dv_2)) ds$, and observe that $B\in L^\infty((0,T)\times\R^d)$ and $v$ satisfies
 \begin{equation*}
\left\{
\begin{array}{lll} -v_t+B(t,x)\cdot Dv=\Delta v+F(x,m_1)-F(x,m_2)
\quad 
 \text{ in  }(0, T)\times \R^d, 
 \\
 v(T,x)=G(x, m_1(T)) - G(x, m_2(T)) .
  \end{array}
\right.\,
 \end{equation*}
Now set $w:=Dv$, $\bar F(t,x):=F(x,m_1)-F(x,m_2)$, and differentiate the equation to get the system of parabolic PDEs
 \begin{equation}\label{syst}
\left\{
\begin{array}{lll} -\frac{\de w_j}{\de t}+(B(t,x)\cdot w)_{x_j}=\Delta w_j+(\bar F(t,x))_{x_j}
\quad 
 \text{ in  }(0, T)\times \R^d, \quad j=1,\dots, d , 
 \\ \\
 w(T,x)=D_xG(x, m_1(T)) - D_xG(x, m_2(T)) .
  \end{array}
\right.\,
 \end{equation}

\noindent {\em Step 2.} Take a smooth cutoff function $\zeta_R\geq 0$ satisfying $|D\zeta_R|\leq C$, $\zeta_R(x)=1$ for $|x|\leq R$ and $\zeta_R(x)=0$ for $|x|\geq 2R$. Multiply the $j$-th equation of \eqref{syst} by $w_j \zeta_R^2$, integrate by parts in space and in the interval $[t, T]$ in time to get

\begin{multline*}
 \int_t^T\frac d{dt}\int_{\R^d} \frac{w_j^2}2\zeta_R^2 dx ds +  \int_t^T\int_{\R^d}  B
 \cdot w\left(\frac{\de w_j}{\de x_j}\zeta_R^2+w_j\frac{\de \zeta_R^2}{\de x_j}\right) dx ds =
 \\
 \int_t^T\int_{\R^d} \left( |Dw_j|^2\zeta_R^2 +  w_j Dw_j\cdot D\zeta_R^2\right) dx ds + \int_t^T  \int_{\R^d} \bar F
\left(  \frac{\de w_j}{\de x_j}\zeta_R^2 +w_j\frac{\de \zeta_R^2}{\de x_j}\right)dx ds ,
\end{multline*}
so that
 \begin{multline}
 \label{estim}
\int_{\R^d} \frac{w_j^2(t, \cdot)}2\zeta_R^2 dx - \int_{\R^d} \frac{w_j^2(T, \cdot)}2\zeta_R^2 dx + \int_t^T\int_{\R^d} |Dw_j|^2\zeta_R^2 dx ds \leq
\\
\int_t^T\int_{\R^d} |w_j|\left|D w_j\right| 2\zeta_R |D\zeta_R| dx ds +
\|B\|_\infty
\int_t^T\int_{\R^d}\left( |w| \left| \frac{\de w_j}{\de x_j}\right| \zeta_R^2 + |w| | w_j| \left|\frac{\de\zeta_R^2}{\de x_j}\right|\right)  dx ds +
\\
 \int_t^T  \int_{\R^d} |\bar F|
\left(\left|\frac{\de w_j}{\de x_j}\right|  \zeta_R^2  + |w_j| \left|\frac{\de\zeta_R^2}{\de x_j}\right| \right) dx ds .
\end{multline}
We estimate
\[
\int_t^T\int_{\R^d} |w_j|\left|D w_j\right| 2\zeta_R |D\zeta_R|dx ds \leq   
 \eps  \int_t^T\int_{\R^d} |Dw_j|^2\zeta_R^2 dx ds +
\frac 1{\eps}\int_t^T\int_{\R^d} w_j^2 |D\zeta_R|^2 dx ds
\]
and observe that all integrals involving derivatives of $\zeta_R$ vanish as $R\to\infty$ because $D\zeta_R\to 0$ a.e. and $w, \bar F \in 
L^2([0,T]\times \R^d)$. Next we estimate
\[
\int_t^T\int_{\R^d} |w| \left| \frac{\de w_j}{\de x_j}\right| \zeta_R^2 dx ds \leq  \int_t^T\int_{\R^d}\left( \frac 1{2\eps}|w|^2 \zeta_R^2 + \frac \eps 2\left| \frac{\de w_j}{\de x_j}\right|^2 \zeta_R^2 \right)dx ds ,
\]
\[
\int_t^T\int_{\R^d} |\bar F| \left| \frac{\de w_j}{\de x_j}\right| \zeta_R^2 dx ds \leq  \int_t^T\int_{\R^d}\left( \frac 1{2\eps}|\bar F|^2 \zeta_R^2 + \frac \eps 2\left| \frac{\de w_j}{\de x_j}\right|^2 \zeta_R^2 \right)dx ds .
\]
We plug these inequalities in \eqref{estim} with $\eps$ satisfying $1=(\|B\|_\infty+3)\eps/2$, so that the terms involving $Dw_j$ cancel out.
Now we can let 
 $R\to\infty$, sum over $j$, 
and use the terminal condition on $w=Dv$ with the assumption \eqref{Glip} to get
\[
\|w(t,\cdot)\|_2^2 \leq 
L_{G} \|m
(T,\cdot)
\|_2^2 + \int_t^T  \frac d{\eps} \|\bar F(s,\cdot)\|_2^2 ds +  \frac {d\|B\|_\infty}{\eps}\int_t^T\|w(s,\cdot)\|_2^2 ds .
\]
Gronwall inequality gives, for all $0\leq t\leq T$,
\[
\|w(t,\cdot)\|_2^2 \leq \left(
L_{G} \|m(T,\cdot)\|_2^2 + \frac d{\eps} \int_t^T   \|\bar F(s,\cdot)\|_2^2 ds\right)e^{ 
d\|B\|_\infty T/
\eps
} .
\]
We set $c_o:=d
(\|B\|_\infty + 3)/2=d
/\eps$ and use \eqref{Flip} to get
 \begin{equation}
 \label{est_1}
\|Dv(t,\cdot)\|_2^2 \leq \left(
L_{G} \|m(T,\cdot)\|_2^2 + c_o
L_F\int_t^T   \|m(s,\cdot)\|_2^2  ds\right)e^{ c_o
\|B\|_\infty T} .
 \end{equation}
 
 
\noindent {\em Step 3.} Observe that $m$ satisfies 
 \begin{equation*}
\left\{
\begin{array}{lll}
m_t - \text{div}\left(
DH(Dv_1) m\right) = \Delta m +  \text{div}\left((DH(Dv_1)-DH(Dv_2)) m_2\right)
  \quad 
\text{ in  }(0, T)\times \R^d, \\
 m(0,x)= 0 .
 \end{array}
\right.\,
\end{equation*}
Define $\tilde B(t,x):=DH(Dv_1)$, the matrix 
$$A(t,x):= m_2 \int_0^1D^2H(Dv_2+s(Dv_1-Dv_2)) ds$$
 and  $\tilde F(t,x):=A(t,x)(Dv_1-Dv_2)$.  Then the PDE for $m$ reads
\[
m_t - \text{div}(
\tilde B m) = \Delta m +  \text{div}\tilde F ,
\]
with $\tilde B$ and $A$ bounded by the assumption {\em (i)} or {\em (ii)} and the estimate \eqref{est_m_bis}.
As in Step 2 we multiply the equation by $m \zeta_R^2$ and integrate by parts. Now $m \in L^2([0,T]\times \R^d)$ by Step 0 and $\tilde F \in L^2([0,T]\times \R^d)$ by \eqref{est_1}. Then we can estimate  as in Step 2 and let $R\to\infty$ to get
\[
\|m(t,\cdot)\|_2^2 \leq  \frac 1{\eps} \int_0^t   \|\tilde F(s,\cdot)\|_2^2 ds +  \frac {\|\tilde B\|_\infty}{\eps}\int_0^t\|m(s,\cdot)\|_2^2 ds ,
\]
where we used the initial condition $m(0,x)=0$ and chose $\eps=2/(\|\tilde B\|_\infty + 3)=:1/c_1$.
Then Gronwall inequality gives, for all $0\leq t\leq T$,
 \begin{equation}
 \label{est_2}
\|m(t,\cdot)\|_2^2 \leq   c_1 e^{
c_1\|\tilde B\|_\infty T} \int_0^t   \|\tilde F(s,\cdot)\|_2^2 ds \leq  c_1 e^{
c_1\|\tilde B\|_\infty T} \|A\|^2_\infty  \int_0^t   \| Dv(s,\cdot)\|_2^2 ds.
 \end{equation}

\noindent {\em Step 4.} Now we set $\phi(t):= \|Dv(t,\cdot)\|_2^2$ and combine \eqref{est_1} and \eqref{est_2} to get
 \begin{equation}
 \label{phi}
\phi(t)\leq C_1 C_3 \int_0^T \phi(s) ds + C_2 C_3 \int_t^T \int_0^\tau  \phi(s) ds \,d\tau ,
 \end{equation}
for suitable explicit constants $C_i$ depending only on the quantities listed in the statement of the theorem.
Then $\Phi := \sup_{0\leq t\leq T} \phi(t)$ satisfies
\[
\Phi\leq \Phi(TC_1C_3 + T^2C_2C_3/2)
\]
which implies $ \Phi=0$ if $T<\bar T:= (C_1+\sqrt{C_1^2+ 2C_2/C_3})/C_2$. Therefore for such $T$ we conclude that $Dv_1(t,x)=Dv_2(t,x)$ for all $x$ and $0\leq t\leq T$. By the uniqueness of solution for the KFP equation we deduce $m_1=m_2$ and then, by the uniquness of solution of the HJB equation, $v_1=v_2$.
 \end{proof} 
\begin{rem}
\label{plions}
\upshape
The same result holds for solutions $\Z^d$-periodic in the space variable $x$ in the case that $F$ and $G$ are $\Z^d$-periodic in $x$, with the same  proof (and no need of cutoff). In such case of periodic boundary conditions a uniqueness result for short $T$ was presented by Lions in \cite{L} for regularizing running cost $F$ and for terminal cost $G$ independent of $m$. 
 He used estimates in $L^1$ norm for $m$ and in $L^\infty$ norm for $Dv$, instead of the $L^2$ norms we used here in \eqref{est_1} and \eqref{est_2}. Our main contributions are the replacement of the hard estimates stated by Lions with more direct ones obtained by energy methods that require less assumptions on $F$, and the consideration of a 
  cost $G$ depending on the terminal density $m(T)$.
\end{rem}
\begin{rem}\upshape
If the terminal cost $G$ satisfies \eqref{Glip} with $L_G=0$, i.e., $D
G$ does not depend on $m(T)$, then $C_1=0$ and $\bar T=\sqrt{2/(C_2C_3)}$. 
By a simple and elegant argument of Lions \cite{L} based on the inequality \eqref{phi}, the condition for uniqueness $T\leq\bar T$ can be improved in this case to $T\leq \pi/(2\sqrt{C_2C_3})$.
\end{rem}
%
%
%
%
\begin{rem}
\label{C11}
\upshape
The proof of the theorem shows also that there is uniqueness for any $T>0$ if {\em (i)} holds with $\bar C_H$ sufficiently small, or if {\em (ii)} holds with $\sup_{|p|\leq K}|D^2H(p)|$ sufficiently small. In fact,  $\|A\|_\infty$ becomes small as this quantities get small, and so the constant $C_3$ can be made as small as we want. This generalizes a recent result in \cite{T} for nonconvex $H$ with both $C_H$ and $\bar C_H$ small (and periodic boundary conditions).
\end{rem}
\begin{rem}\upshape
\label{C12}
The proof of the theorem shows also the uniqueness of the solution for any $T>0$ if the Lipschitz constants $L_F$ and $L_G$ of the running cost $F$ and of $D_xG$ are small enough, because the constants $C_1$ and $C_2$ can be made small. See also \cite{Amb} for existence and uniqueness results 
under smallness assumptions on the data.
\end{rem}
\begin{rem}\upshape
The estimate \eqref{est_m_bis}, obtained by probabilistic methods in the Appendix, could be replaced by
 \begin{equation*}
 \label{est_m}
  \|m_i(t,\cdot)\|_\infty\leq C_d \left( \|\nu\|_\infty +T(1+\|DH(Dv_i)\|_\infty)^{d+4} \right),  \quad i=1, 2, \;\forall \,t\in[0,T],
 \end{equation*}
 which can be deduced from Corollary 7.3.8 of  \cite{BKRS}, p. 302.
Another alternative estimate, 
obtained by a simple application of the comparison principle, 
 gives 
$$m_i(t,x)\leq e^{t \sup (\text{div} DH(Dv_i))^+} \sup \nu,$$ where, however, the right hand side may depend on the second derivatives of $v_i$. 
\end{rem}
\begin{rem}\label{rmk:refs}
\upshape
A similar structure of proof, based on combining backward and forward integral estimates, was also employed for finite state MFGs by \cite{GMS}.
An existence and uniqueness result under a ''small data'' condition was proved in \cite{HCM:07jssc} for Linear-Quadratic-Gaussian MFGs using a contraction mapping argument to solve the associated system of Riccati differential equations. For different classes of linear-quadratic \mbox{MFGs}, a contraction mapping argument is also used in \cite{wangzhang12, moonbasar16} to obtain existence and uniqueness of solutions under certain implicitly ``small data'' assumptions. The work \cite{ahuja16} mentioned in the introduction provides existence and uniqueness for \mbox{MFGs} with common noise under a weak monotonicity condition in the spirit of Lasry-Lions. The proof relies on the representation of the \mbox{MFG} in terms of a forward-backward stochastic differential equation. Existence of a unique solution is first established for small times by a contraction mapping argument. The monotonicity assumption then allows to extend the solution to any given time horizon. Finally, our assumption $D(v_1-v_2)\in L^2([0,T]\times \R^d)$ can be replaced by $(v_1-v_2)_t\in L^2([0,T]\times \R^d)$, see \cite{Far}.
\end{rem}
\begin{rem} On the smoothness of $H$. \upshape
The assumption $H\in C^2(\R^d)$ can be relaxed to $H\in C^1(\R^d)$  with $DH$ locally Lipschitz. Then the statement of the Theorem remains true with the following changes:  in case {\em (i)} $DH$ is assumed globally Lipschitz and $\bar C_H$ is redefined as its Lipschitz constant, in case {\em (ii)} the time $\bar T$ depends on the Lipschitz constant of $DH$ on the ball $\{p : |p|\leq K\}$ instead of $\sup_{|p|\leq K}|D^2H(p)|$. The proof is the same, after changing Step 3 with the observation that 
 there exists a measurable and locally bounded matrix valued function $\tilde A$ such that 
$$DH(p)-DH(q)=\tilde A(p,q)(p-q),$$
see \cite{BC} for a proof of this fact.
\end{rem}
\begin{ex} 
\label{regul}
Regularizing costs. \upshape
Consider $F$ and $G$ of the form
\[
F(x, \mu)= F_1\left(x, \int_{\R^d} k_1(x,y) \mu(y) dy\right) , \quad G(x, \mu)= 
g_1(x)\int_{\R^d} k_2(x,y) \mu(y) dy +g_2(x)
\]
with $F_1 
 : \R^d \times \R\to \R$ measurable and $k_1, k_2
\in L^2( \R^d \times \R^d)$. Then 
\[
\| \int_{\R^d} k_i(\cdot,y) \mu(y) dy -  \int_{\R^d} k_i(\cdot,y) \nu(y) dy\|_2 \leq \|k_i(\cdot, \cdot)\|_2 \|\mu - \nu\|_2, \quad i=1, 2 .
\]
About $F$ we suppose
$$|F_1(x, r)- F_1(x, s)|\leq L_1
 |r-s|   \quad \forall \, x\in\R^d, r,s\in \R$$
 and get \eqref{Flip} with $L_F=L_1^2 \|k_1(\cdot, \cdot)\|_2^2$. About $G$ we assume $g_1, g_2\in C^1(\R^d)$, $Dg_1$ bounded,  $D_x k_2\in L^2( \R^d \times \R^d)$. Then 
 $$DG(x, \mu)=Dg_1(x)\int_{\R^d} k_2(x,y) \mu(y) dy +g_1(x)\int_{\R^d} D_xk_2(x,y) \mu(y) dy  + Dg_2(x)$$
 satisfies
 \[
 \|DG(\cdot, \mu) - DG(\cdot, \nu)\|_2\leq \left(\|Dg_1\|_\infty \|k_2(\cdot, \cdot)\|_2 
  +\|g_1\|_\infty \|D_xk_2(\cdot, \cdot)\|_2\right) \|\mu - \nu\|_2 ,
 \]
which implies \eqref{Glip}.
\end{ex} 
 \begin{ex} Local costs. \upshape
 Take $G=G(x)$ independent of $m(T)$ and $F$ of the form
 \[
 F(x,\mu)=F_l(x,\mu(x))
 \]
 with $F_l : \R^d\times [0,+\infty)\to\R$ such that
 $$
 |F_l(x, r)- F_l(x, s)|\leq L_l
 |r-s|   \quad \forall \, x\in\R^d, \,r,s\geq 0 .
 $$
  Then $F$ satisfies \eqref{Flip} with $L_F=L_l^2$. 
\end{ex}
\begin{rem}\upshape
The functional $M(\mu)=\int_{\R^d} y\mu(y) dy$ is Lipschitz in $L^2$ among densities of measures with support contained in a given compact set, by Example \ref{regul}. This is the case, for instance, of periodic boundary conditions where the measures are on the torus $\T^d$ (see Remark \ref{plions}). In such a case $F$ of the form $F(x, M(\mu))$ satisfies \eqref{Flip} if it is Lipschitz in the second entry uniformly with respect to $x$.
\end{rem}
\begin{rem}\upshape
If we compare the uniqueness Theorem \ref{st-uniq} with the non-uniqueness Theorem \ref{T1} there are several different assumptions. For instance, in Theorem \ref{T1}  $DH$ is discontinuous in $0$ and $v_1-v_2\notin L^2(\R)$, and  the costs in Example \ref{ese1} do not satisfy \eqref{Flip} and \eqref{Glip}.  So it is not clear which of these conditions is mostly responsible of the lack of uniqueness 
 for short $T$. From the proof of Theorem  \ref{st-uniq} we guess that at least the 
continuity of $DH$ is indispensable for the short-horizon uniqueness. 

\end{rem}
\section{Uniqueness and non-uniqueness for two populations}
\label{two populations}
Consider the system of MFG PDEs corresponding to two populations
\begin{equation}\label{pde2}
\left\{
\begin{array}{lll}
-\de_t v_i+ H_i(\de_x v_i) = \frac 12 \s_i^2(x) \de_{xx}v_i + F_i(x, m_1(t,\cdot),m_2(t,\cdot))  \quad 
 \text{ in  }(0, T)\times \R, \\ \\
 v_i(T,x)=G_i(x, m_1(T),m_2(T)) ,\\ \\
\de_t m_i - (
H_i' (\de_x v_i) m_i)_x = \frac 12 \s_i^2(x) \de_{xx}m_i  \quad 
\text{ in  }(0, T)\times \R, 
\\ \\  m_i(0,x)=\nu_i(x) , 
\qquad\qquad i=1, 2 ,
 \end{array}
\right.\,
\end{equation}
where the Hamiltonians are 
\begin{equation}
\label{H_i}
H_i(p) := \max_{a_i\leq \g \leq b_i}\{-p\g\} =
\left\{
\begin{array}{ll} 
-b_ip &\text{ if } p\leq 0, \\ -a_ip &\text{ if } p\geq 0 ,\end{array} \qquad a_i<0<b_i , 
\right.\,
	\end{equation}
so that
\[
H_i'(p) = -b_i \text{ if } p<0 , \quad H_i'(p) = -a_i \text{ if } p>0 ,
\]
$\sigma_i
$ satisfy \eqref{sigma}, 
and  $F_i, G_i : \R\times \tilde{\mathcal{P}}_{1}(\mathbb{R})\times \tilde{\mathcal{P}}_{1}(\mathbb{R}) \to \R$ 
verify the same regularity conditions as $F, G$ in Section \ref{section pde}.

We give two different sets of qualitative assumptions that produce examples of nonuniqueness. The first is the following, for $i=1, 2$,
\begin{equation}\label{ass-syst-F-1}
	D_xF_i
	(x,\mu_1,\mu_2)
	\left\{
	\begin{array}{ll}\leq 0  
	\quad &\text{ if } M(\mu_1), M(\mu_2)>
	 0 ,\\ 
	\geq 0  
	\quad &\text{ if } M(\mu_1), M(\mu_2)<
	 0 .\end{array}
\right.\,
	\end{equation}

\begin{equation}\label{ass-syst-G-1}
	D_x G_i
	(x,\mu_1,\mu_2)
	\left\{
	\begin{array}{ll}\leq 0  \quad\mbox{ and }\not\equiv  0  \quad &\text{ if } M(\mu_1), M(\mu_2)>
	 0 ,\\ 
	\geq 0  \quad\mbox{ and }\not\equiv  0  \quad &\text{ if } M(\mu_1), M(\mu_2)<
	 0 .\end{array}
\right.\,
		\end{equation}
 \begin{prop}
 \label{nonex5}
Assume \eqref{H_i}, $F_i$ satisfy {\rm (F1)} and \eqref{ass-syst-F-1}, $G_i$ satisfy {\rm (G1)} and \eqref{ass-syst-G-1}.
Then, for all $\nu_i$ with $M(\nu_i)=0$, there is a classical solution of \eqref{pde2} with $\de_x v_1(t,x), \de_x v_2(t,x)<0$ for all $0<t<T$ and a classical solution with $\de_x v_1(t,x), \de_x v_2(t,x)>0$ for all $0<t<T$.
\end{prop}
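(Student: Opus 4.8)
The plan is to mimic the construction in the proof of Theorem~\ref{T1}, exploiting two facts: once the drift in each Kolmogorov--Fokker--Planck equation is frozen to a constant, the system \eqref{pde2} splits into two single-population problems that are coupled only through the measure-arguments $m_1,m_2$ of the costs; and this residual coupling is harmless because the construction forces $M(m_1(t))$ and $M(m_2(t))$ to have the \emph{same} sign for every $t>0$. \textbf{First solution.} For $i=1,2$ I would first solve the linear parabolic Cauchy problem
\[
\de_t m_i + b_i(m_i)_x = \frac12\s_i^2(x)(m_i)_{xx} \ \text{ in }(0,T)\times\R,\qquad m_i(0,\cdot)=\nu_i ,
\]
which is solvable by \cite{Fri} and whose solution is the law of $X_i(t)=\xi_i+b_it+\int_0^t\s_i(X_i(s))\,dW_i(s)$ with $\Law(\xi_i)=\nu_i$. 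Hence $M(m_i(t))=M(\nu_i)+b_it=b_it>0$ for every $t\in(0,T]$, and the H\"older-in-time bound $d_1(m_i(t),m_i(s))\le C(b_i+\|\s_i\|_\infty)\sqrt{|t-s|}$ holds as in \eqref{d1}.

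Since $M(m_1(t))$ and $M(m_2(t))$ are simultaneously positive for $t>0$, assumption \eqref{ass-syst-F-1} gives $D_xF_i(x,m_1(t),m_2(t))\le0$ for all $x$ and $t\in(0,T)$, while \eqref{ass-syst-G-1} gives $D_xG_i(\cdot,m_1(T),m_2(T))\le0$ and $\not\equiv0$. With these $m_1,m_2$ now treated as given data, I would solve, separately for each $i$, the decoupled Hamilton--Jacobi--Bellman problem
\[
-\de_t v_i - b_i(v_i)_x = \frac12\s_i^2(x)(v_i)_{xx}+F_i(x,m_1(t),m_2(t))\ \text{ in }(0,T)\times\R,\qquad v_i(T,x)=G_i(x,m_1(T),m_2(T)) ,
\]
which has a unique classical solution by \cite{Fri}, using the time-regularity of $m_i$ and (F1), (G1), \eqref{sigma}. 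Differentiating in $x$, the function $w_i:=\de_x v_i$ solves a linear parabolic equation with source $D_xF_i(x,m_1(t),m_2(t))\le0$ and terminal datum $D_xG_i(\cdot,m_1(T),m_2(T))\le0$; the comparison principle gives $w_i\le0$, and, because the terminal datum is $\not\equiv0$, the Strong Maximum Principle excludes $w_i\equiv0$ on any subinterval $(t,T)$, whence $w_i(t,\cdot)<0$ for all $t<T$. Therefore $H_i'(\de_x v_i)=-b_i$, so $(v_i,m_i)$ solves the $i$-th block of \eqref{pde2} and $(v_1,v_2,m_1,m_2)$ is a classical solution with $\de_x v_1,\de_x v_2<0$.

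\textbf{Second solution.} The construction is symmetric: replace $b_i$ by $a_i<0$ in both the Fokker--Planck and the HJB equations, so that $m_i(t,\cdot)$ is the law of $\xi_i+a_it+\int_0^t\s_i(X_i(s))\,dW_i(s)$ and $M(m_i(t))=a_it<0$ for $t>0$. Then \eqref{ass-syst-F-1}--\eqref{ass-syst-G-1} give $D_xF_i\ge0$ and $D_xG_i\ge0$ (with $D_xG_i\not\equiv0$), so the comparison principle together with the Strong Minimum Principle yield $\de_x v_i(t,\cdot)>0$ for $t<T$, hence $H_i'(\de_x v_i)=-a_i$ and $(v_1,v_2,m_1,m_2)$ again solves \eqref{pde2}. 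The two solutions are distinct since $\de_x v_i$ has opposite sign in the two constructions.

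I do not anticipate a real obstacle. The only delicate point is that the sign conditions \eqref{ass-syst-F-1}--\eqref{ass-syst-G-1} are \emph{conditional} on the two means sharing a sign, and this is precisely what the choice of constant drifts $(b_1,b_2)$, resp.\ $(a_1,a_2)$, guarantees; everything else is the parabolic machinery already used for Theorem~\ref{T1}, applied to each population in turn. Note that, unlike in Theorem~\ref{T1}, no ``either/or'' case distinction between $F_i$ and $G_i$ is needed, since $D_xG_i\not\equiv0$ is built into \eqref{ass-syst-G-1}.
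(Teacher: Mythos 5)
Your proposal is correct and follows essentially the same route as the paper: freeze the drifts to the constants $b_i$ (resp.\ $a_i$), solve the resulting decoupled Fokker--Planck and HJB problems, and use the sign of $M(m_i(t))=b_it$ (resp.\ $a_it$) together with \eqref{ass-syst-F-1}--\eqref{ass-syst-G-1} and the comparison/Strong Maximum Principle argument of Theorem~\ref{T1} to verify the ansatz $\de_x v_i<0$ (resp.\ $>0$). Your observation that the cross-population coupling is harmless precisely because both means share a sign is exactly the point of the paper's construction.
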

   \begin{proof} 
   For the solution with $\de_x v_1(t,x), \de_x v_2(t,x)<0$ we make the ansatz that $m_i$ solves 
  \begin{equation}
  \label{m_i_eq}
   \de_t m_i + b_i \de_x m_i = \frac 12 \s_i^2(x) \de_{xx}m_i  \quad 
\text{ in  }(0, T)\times \R, \quad\
  m_i(0,x)=\nu_i(x) ,
	\end{equation}
 so that   $M(m_1(t,\cdot))=b_1t>0$ and $M(m_2(t,\cdot))=b_2 t>0$.
We solve the equations 
  \begin{equation}
  \label{v_i_eq}
-\de_t v_i - b_i \de_x v_i = \frac 12 \s_i^2(x) \de_{xx}v_i + F_i(x, m_1(t,\cdot),m_2(t,\cdot)) , \;\;
 v_i(T,x)=G_i(x, m_1(T),m_2(T)) ,
	\end{equation}
for $i=1, 2$. Then $w_i:=\de_x v_i$ solves
    \begin{equation}
  \label{w_i_eq}
 -\de_t w_i -b_i \de_x w_i -
 \s_i(x) \s_i(x)_x  \de_x w_{i} - \frac 12 \s_i^2(x)  \de_{xx}w_{i} = D_x F_i
 (x, m_1(t,\cdot),m_2(t,\cdot))\leq 0 , 
 \text{ in  }(0, T)\times \R , 
 	\end{equation}
     \begin{equation}
  \label{w_i_tcond} 
 w_i(T,x)=D_x G_i
 (x, m_1(T,\cdot),m_2(T,\cdot))<0 ,
 \end{equation}
and so $w_i<0$ for all $t<T$ by the Strong Maximum Principle. On the other hand, if $\de_x v_i<0$ the equations for $m_i$ in \eqref{pde2}  are 
 \eqref{m_i_eq} and the equations for $v_i$ in \eqref{pde2}  are 
 \eqref{v_i_eq}, as we guessed. 
 
 The construction of a solution with $\de_x v_1(t,x), \de_x v_2(t,x)>0$ is done in a symmetric way, starting with the ansatz that $m_i$ solves 
\begin{equation}
\label{m_i_eq2}
   \de_t m_i +a_i \de_x m_i = \frac 12 \s_i^2(x) \de_{xx}m_i  \quad 
\text{ in  }(0, T)\times \R, \quad\
  m_i(0,x)=\nu_i(x) ,
	\end{equation}
so now   $M(m_1(t,\cdot))=a_1t<0$ and $M(m_2(t,\cdot))=a_2 t<0$. We proceed as before by solving a linear equation  for $v_i$ like \eqref{v_i_eq} but with the term $b_i \de_x v_i$ replaced by $a_i \de_x v_i$. Now $D_x F_i
\geq 0$ in the equation for $w_i:=\de_x v_i$ and $D_x G_i
>0$ in the terminal conditions, so $w_i>0$ for all $t<T$.
\end{proof}
The second set of assumptions is the following.
\begin{equation}\label{ass-syst-F-2}
	D_x F_1
	(x,\mu_1,\mu_2)
	\left\{
	\begin{array}{ll}\leq 0  
	\quad &\text{ if } M(\mu_1)> 0 , M(\mu_2)<0 \\ 
	\geq 0  
	\quad &\text{ if } M(\mu_1)< 0 , M(\mu_2)>0.\end{array}
\right.\,
	\end{equation}
\begin{equation}\label{ass-syst-F-3}
	D_x F_2
	(x,\mu_1,\mu_2)
	\left\{
	\begin{array}{ll}\leq 0  
	\quad &\text{ if }  M(\mu_1)< 0 , M(\mu_2)>0 ,\\ 
	\geq 0  
	\quad &\text{ if }  M(\mu_1)> 0 , M(\mu_2)<0 .\end{array}
\right.\,
	\end{equation}
\begin{equation}\label{ass-syst-G-2}
	D_x G_1
	(x,\mu_1,\mu_2)
	\left\{
	\begin{array}{ll}\leq 0  \quad\mbox{ and }\not\equiv  0  \quad &\text{ if }M(\mu_1)> 0 , M(\mu_2)<0 ,\\ 
	\geq 0  \quad\mbox{ and }\not\equiv  0  \quad &\text{ if } M(\mu_1)< 0 , M(\mu_2)>0
	 .\end{array}
\right.\,
		\end{equation}
\begin{equation}\label{ass-syst-G-3}
	D_x G_2
	(x,\mu_1,\mu_2)
	\left\{
	\begin{array}{ll}\leq 0  \quad\mbox{ and }\not\equiv  0  \quad &\text{ if } M(\mu_1)< 0 , M(\mu_2)>0 ,\\ 
	\geq 0  \quad\mbox{ and }\not\equiv  0  \quad &\text{ if }  M(\mu_1)> 0 , M(\mu_2)<0 
	.\end{array}
\right.\,
		\end{equation}
 \begin{prop} \label{nonex6}
 Assume \eqref{H_i}, $F_i$ satisfy {\rm (F1)}, \eqref{ass-syst-F-2},  and \eqref{ass-syst-F-3}, $G_i$ satisfy {\rm (G1)}, \eqref{ass-syst-G-2}, and \eqref{ass-syst-G-3},  $i=1, 2$. 
 Then, for all $\nu_i$ with $M(\nu_i)=0$, there is a solution of \eqref{pde} with $\de_x v_1(t,x)<0, \de_x v_2(t,x)>0$ for all $0<t<T$ and a solution with $\de_x v_1(t,x)>0, \de_x v_2(t,x)<0$ for all $0<t<T$.
\end{prop}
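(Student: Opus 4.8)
The plan is to mirror the proof of Proposition~\ref{nonex5} (and, ultimately, of Theorem~\ref{T1}): guess the sign pattern of the two gradients $\de_x v_i$, which decouples the system into linear equations, solve those, and then verify the guess by parabolic maximum principles; the only new bookkeeping is to match each sign pattern to the branch of \eqref{ass-syst-F-2}--\eqref{ass-syst-G-3} it activates. For the solution with $\de_x v_1 < 0$ and $\de_x v_2 > 0$, I would first make the ansatz that $m_1$ solves $\de_t m_1 + b_1 \de_x m_1 = \tfrac12 \s_1^2 \de_{xx} m_1$, $m_1(0) = \nu_1$, and $m_2$ solves $\de_t m_2 + a_2 \de_x m_2 = \tfrac12 \s_2^2 \de_{xx} m_2$, $m_2(0) = \nu_2$. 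Standard parabolic theory~\cite{Fri} gives classical solutions; representing them as laws of the diffusions with drifts $b_1$, respectively $a_2$, and dispersions $\s_1, \s_2$, as in the proof of Theorem~\ref{T1}, yields $M(m_1(t)) = b_1 t > 0$ and $M(m_2(t)) = a_2 t < 0$ for all $t > 0$, together with the H\"older-in-time bound \eqref{d1} for $d_1$. Thus throughout $(0,T]$ we are in the regime $M(\mu_1) > 0$, $M(\mu_2) < 0$.

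Now the cost terms $(t,x) \mapsto F_i(x, m_1(t), m_2(t))$ and $x \mapsto G_i(x, m_1(T), m_2(T))$ are, by {\rm (F1)}, {\rm (G1)} and \eqref{d1}, regular enough that the linear Cauchy problems $-\de_t v_1 - b_1 \de_x v_1 = \tfrac12 \s_1^2 \de_{xx} v_1 + F_1(\cdot, m_1, m_2)$, $v_1(T) = G_1(\cdot, m_1(T), m_2(T))$, and the analogous one for $v_2$ with $a_2$ in place of $b_1$ and index $2$ throughout, have unique classical solutions. Setting $w_i := \de_x v_i$ and differentiating in $x$, the equation for $w_1$ has source $D_x F_1(\cdot, m_1, m_2) \leq 0$ by \eqref{ass-syst-F-2} and terminal datum $D_x G_1 \leq 0$, $\not\equiv 0$ by \eqref{ass-syst-G-2}, so the comparison principle gives $w_1 \leq 0$ and the Strong Maximum Principle gives $w_1 < 0$ on $(0,T)\times\R$ (otherwise $w_1 \equiv 0$ on some $(t_0,T)\times\R$, forcing $D_x G_1(\cdot, m_1(T), m_2(T)) \equiv 0$, against \eqref{ass-syst-G-2}). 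Symmetrically, $w_2$ has source $D_x F_2 \geq 0$ by \eqref{ass-syst-F-3} and terminal datum $D_x G_2 \geq 0$, $\not\equiv 0$ by \eqref{ass-syst-G-3}, so the Strong Minimum Principle gives $w_2 > 0$ on $(0,T)\times\R$. Since $\de_x v_1 < 0$ forces $H_1'(\de_x v_1) = -b_1$ and $\de_x v_2 > 0$ forces $H_2'(\de_x v_2) = -a_2$, the equations just solved coincide with the corresponding ones in \eqref{pde2}, so $(v_1, m_1, v_2, m_2)$ is the desired classical solution. The solution with $\de_x v_1 > 0$, $\de_x v_2 < 0$ is obtained by the mirror ansatz: $m_1$ with drift $a_1$ and $m_2$ with drift $b_2$, hence $M(m_1(t)) = a_1 t < 0$ and $M(m_2(t)) = b_2 t > 0$; now \eqref{ass-syst-F-2}--\eqref{ass-syst-G-2} give $D_x F_1 \geq 0$ and $D_x G_1 \geq 0$, $\not\equiv 0$, so $w_1 > 0$, while \eqref{ass-syst-F-3}--\eqref{ass-syst-G-3} give $D_x F_2 \leq 0$ and $D_x G_2 \leq 0$, $\not\equiv 0$, so $w_2 < 0$, and again $H_1'(\de_x v_1) = -a_1$, $H_2'(\de_x v_2) = -b_2$ make everything consistent with \eqref{pde2}.

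There is no genuine obstacle beyond those already handled in Theorem~\ref{T1}: the one thing to be careful about is that the two populations now have opposite-sign means, so it is precisely the cross-population branches of \eqref{ass-syst-F-2}--\eqref{ass-syst-G-3} that come into play, and one must check that each ansatz is self-consistent. The reason the argument goes through unchanged is that the Hamilton--Jacobi equations enter the $m_i$-dynamics only through $H_i'(\de_x v_i)$, which is locally constant away from the origin; fixing the sign of $\de_x v_i$ therefore linearizes and decouples the whole system, after which the parabolic maximum principle does the rest.
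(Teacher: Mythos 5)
Your proposal is correct and follows essentially the same route as the paper's proof: the same ansatz ($m_1$ with drift $b_1$, $m_2$ with drift $a_2$, giving $M(m_1(t))>0$ and $M(m_2(t))<0$), the same decoupling into linear Cauchy problems, and the same use of the comparison and strong maximum/minimum principles via the cross-population branches of \eqref{ass-syst-F-2}--\eqref{ass-syst-G-3}, with the symmetric construction for the second solution. The paper merely states these steps more tersely as a variant of Proposition~\ref{nonex5}; you have filled in the same details it delegates to the proofs of Proposition~\ref{nonex5} and Theorem~\ref{T1}.
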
 
   \begin{proof} 
   This is a variant of the preceding proof, so we only explain the changes. For the solution with $\de_x v_1
   <0, \de_x v_2
   >0$ we start solving \eqref{m_i_eq} for $i=1$ and \eqref{m_i_eq2} for $i=2$, so that $M(m_1(t
   ))>0$ and $M(m_2(t
   ))
   <0$. Then for $i=1$ we solve \eqref{v_i_eq} and get  \eqref{w_i_eq} and \eqref{w_i_tcond}, by the assumptions  \eqref{ass-syst-F-2} and  \eqref{ass-syst-G-2}, so $\de_x v_1
   <0$.   For $i=2$ we solve \eqref{v_i_eq}  with the term $b_i \de_x v_i$ replaced by $a_2 \de_x v_2$. Then $D_x F_2
\geq 0$ in the equation for $w_2:=\de_x v_2$ and $D_x G_2
>0$ in the terminal conditions,  by the assumptions  \eqref{ass-syst-F-3} and  \eqref{ass-syst-G-3}. 
 Thus $\de_x v_2>0$. 
 The construction of the solution with $\de_x v_1
   >0, \de_x v_2
   <0$ is symmetric.
\end{proof}

\begin{rem}
\upshape
In Proposition \ref{nonex5} the assumption $D_xG_i\not\equiv 0$ in \eqref{ass-syst-G-1} can be dropped and replaced by $D_xF_i\not\equiv 0$ in \eqref{ass-syst-G-1}, by the argument of Theorem \ref{T1}. The same variant can be done on the assumptions of Proposition \ref{nonex6}.
\end{rem}
%
%
Next we use the last two propositions and the example to check the sharpness of some sufficient conditions for uniqueness. A natural generalization to systems with two populations of the 
monotonicity conditions \eqref{monoG}  \eqref{monoF} 
 is given the following result, see also \cite{Cir} for stationary equations.
\begin{thm} 
\label{uniq-th}
Assume $H_i$ are convex and Lipschitz, $\s_i>0$ are constant, $\nu_i \,dx\in  \mathcal{P}_{1}(\mathbb{R})$, the functions $x\mapsto F_i(x,\mu_1, \mu_2)$ and $x\mapsto G_i(x,\mu_1, \mu_2)$ 
depend only on $M(\mu_1), M(\mu_2)$, they grow at most linearly in $x$ for bounded  $\int_\R |x| d\mu_i(x)$, and for some $\lambda_i >0$ and all $(\mu_1, \mu_2),(\bar \mu_1, \bar \mu_2) \in \mathcal{P}_{1}(\mathbb{R})^2$ with a $C^2$ density
\begin{equation}\label{mono2} 
  \int_\R \sum_{i=1}^2 \l_i[F_i(x,\mu_1,\mu_2) - F_i(x,\bar \mu_1,\bar \mu_2)] d(\mu_i -\bar \mu_i)(x) 
  > 0 , \; \text{if } M(\mu_1)\ne  M(\bar \mu_1) \text{ or }  M(\mu_2)\ne  M(\bar \mu_2) ,
  \end{equation}
  \begin{equation}\label{mono3}
  \int_\R \sum_{i=1}^2 \l_i[G_i(x,\mu_1,\mu_2) - G_i (x,\bar \mu_1,\bar \mu_2)] d(\mu_i -\bar \mu_i)(x) \geq 0 . 
  \end{equation}
Then there is at most one classical solution $(v_1, v_2, m_1, m_2)$ of the problem \eqref{pde2} such that each $v_i$ and its derivatives are bounded by $C(1+|x|)$.
\end{thm}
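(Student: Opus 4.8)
The plan is to follow the proof of Theorem~\ref{th:uniq} (itself a variant of Theorem~3.6 of \cite{Car}), performing the Lasry--Lions duality computation for each population separately and then adding the two resulting identities with the weights $\l_1,\l_2$ supplied by \eqref{mono2}--\eqref{mono3}. Let $(v_1,v_2,m_1,m_2)$ and $(\bar v_1,\bar v_2,\bar m_1,\bar m_2)$ be two classical solutions of \eqref{pde2} with the stated growth. As in Theorem~\ref{th:uniq}, $m_i(t,\cdot)$ and $\bar m_i(t,\cdot)$ are laws of diffusions whose drifts have modulus at most $\|H_i'\|_\infty<\infty$ (since $H_i$ is Lipschitz), so $\int_\R|x|\,dm_i(t,x)\le\int_\R|x|\,d\nu_i(x)+T\|H_i'\|_\infty+|\s_i|\sqrt T$ and likewise for $\bar m_i$; hence all these measures belong to $\mathcal{P}_1(\R)$, have $C^2$ densities by parabolic regularity, and \eqref{mono2}--\eqref{mono3} apply to them. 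Setting $\hat v_i:=v_i-\bar v_i$ and $\hat m_i:=m_i-\bar m_i$, the subtraction of the equations in \eqref{pde2} shows that $\hat v_i$ solves a linear backward parabolic equation with source $F_i(\cdot,m_1,m_2)-F_i(\cdot,\bar m_1,\bar m_2)$ and terminal datum $G_i(\cdot,m_1(T),m_2(T))-G_i(\cdot,\bar m_1(T),\bar m_2(T))$, while $\hat m_i$ solves a linear forward equation with zero initial datum.

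For each fixed $i$ I would multiply the equation for $\hat m_i$ by $\hat v_i\zeta_r$ and the equation for $\hat v_i$ by $\hat m_i$, with $\zeta_r$ the spatial cutoff of \cite{Car}, integrate by parts and add so that the second-order terms cancel; integrating in time over $[0,T]$, using $\hat m_i(0)=0$, and letting $r\to\infty$ (legitimate by the linear growth of $v_i$, $\de_tv_i$, $\de_xv_i$, $\de_{xx}v_i$ and of the costs) yields, after regrouping the Hamiltonian terms according to the nonnegative coefficients $m_i,\bar m_i$ and using the convexity of $H_i$ exactly as in \cite{Car},
\[
\int_\R \hat v_i(T)\,\hat m_i(T)\,dx \;\le\; -\int_0^T\!\!\int_\R \big[F_i(x,m_1(t),m_2(t))-F_i(x,\bar m_1(t),\bar m_2(t))\big]\,d\big(m_i(t)-\bar m_i(t)\big)(x)\,dt ,\qquad i=1,2 .
\]
Multiplying by $\l_i>0$ and summing, and recalling that $\hat v_i(T)=G_i(\cdot,m_1(T),m_2(T))-G_i(\cdot,\bar m_1(T),\bar m_2(T))$ and $\hat m_i(T)=m_i(T)-\bar m_i(T)$, so that the sum of the left-hand sides is $\ge0$ by \eqref{mono3}, one obtains
\[
\int_0^T\!\!\int_\R \sum_{i=1}^2\l_i\big[F_i(x,m_1(t),m_2(t))-F_i(x,\bar m_1(t),\bar m_2(t))\big]\,d\big(m_i(t)-\bar m_i(t)\big)(x)\,dt \;\le\; 0 .
\]

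To conclude, since $F_i(x,\mu_1,\mu_2)$ depends only on $M(\mu_1)$ and $M(\mu_2)$, the time-integrand above vanishes at every $t$ with $M(m_1(t))=M(\bar m_1(t))$ and $M(m_2(t))=M(\bar m_2(t))$, and is strictly positive otherwise by \eqref{mono2}; being nonnegative for every $t$ while having nonpositive integral, it must vanish for a.e.\ $t$, whence $M(m_i(t))=M(\bar m_i(t))$ for $i=1,2$ and, by continuity of $t\mapsto M(m_i(t))$, for all $t\in[0,T]$. Then $F_i(x,m_1(t),m_2(t))=F_i(x,\bar m_1(t),\bar m_2(t))$ for all $t$ and $G_i(x,m_1(T),m_2(T))=G_i(x,\bar m_1(T),\bar m_2(T))$, so $v_i$ and $\bar v_i$ solve the same Hamilton--Jacobi--Bellman equation with the same terminal condition; the uniqueness theory for such scalar equations (using the growth bound $C(1+|x|)$ on $v_i$ and its derivatives) yields $v_i=\bar v_i$, and then $m_i=\bar m_i$ from the Fokker--Planck equation with the now common drift $H_i'(\de_xv_i)$ and initial datum $\nu_i$. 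I expect the only substantive work to be bookkeeping: justifying the passage $r\to\infty$ uniformly in the two populations, and checking that the convexity estimate for the Hamiltonian terms survives the presence of the $m_i$-dependent coefficients; the coupling between populations is handled entirely at the level of the weighted sum, since each population's identity is of the classical one-population Lasry--Lions form.
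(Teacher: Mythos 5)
Your proposal is correct and follows essentially the same route as the paper: the authors likewise run the Lasry--Lions duality computation of Theorem~\ref{th:uniq} for each population, multiply by $\l_i$, sum, use convexity of $H_i$ and \eqref{mono3} to reduce to the weighted time-integrated $F$-inequality, then invoke \eqref{mono2} and the dependence of $F_i,G_i$ on the means alone to conclude via HJB and Fokker--Planck uniqueness. The only difference is presentational: you spell out the cutoff and moment-bound bookkeeping that the paper delegates to the proof of Theorem~\ref{th:uniq}.
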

   \begin{proof} We follow 
   the proof of Theorem \ref{th:uniq}. Let  $(\bar v_1, \bar  v_2, \bar m_1,\bar  m_2)$ be a second solution. We multiply the  equations of the $i$-th population 
    by $\l_i$ and add them over $i$. After using the convexity of $H_i$, as in  the proof of Theorem \ref{th:uniq}, we reach 
 \begin{multline*}
\int_\R \sum_{i=1}^2 \l_i[G_i(x,m_1(T),m_2(T)) - G_i(x,\bar m_1(T),\bar m_2(T))] d(m_i(T) -\bar m_i(T))(x) +\\
 \int_0^T \int_\R \sum_{i=1}^2 \l_i[F_i(x,m_1(t),m_2(t)) - F_i(x,\bar m_1(t),\bar m_2(t))] d(m_i(t) -\bar m_i(t))(x) \,dt
 \leq 0 .
\end{multline*}
Then \eqref{mono2} and \eqref{mono3} imply $M(m_1(t)) = M(\bar m_1(t))$ and $M(m_2(t)) = M(\bar m_2(t))$ for all $t$. Since $F_i$ and $G_i$ depend only on $M(\mu_1), M(\mu_2)$, from the HJB equations we get $v_i=\bar v_i$ and finally the KFP equations give $m_i=\bar m_i$, $i=1, 2$.
\end{proof}
We consider the following example
	\begin{equation}\label{Fi-lin}
	F_i(x,\mu_1, \mu_2)=\a_i x M(\mu_1) + \beta_i x M(\mu_2) + f_i(\mu_1,\mu_2) , \quad i=1, 2 ,
		\end{equation}
	\begin{equation}\label{Gi-lin}
	 G_i(x,\mu_1,\mu_2)=\g_i x M(\mu_1) + \d_i x M(\mu_2) + g_i(\mu_1,\mu_2) ,  \quad i=1, 2 ,
		\end{equation}
	with $\alpha_i, \beta_i, \g_i, \d_i\in \R$, $f_i, g_i : 
	{\mathcal{P}}_{1}(\mathbb{R})^2 \to \R$. 

\begin{cor} 
\label{cor-linear}
Let $H_i$ be given by \eqref{H_i} and $F_i, G_i$ of the form \eqref{Fi-lin},  \eqref{Gi-lin} with  $f_i, g_i$ 
 $d_1$-continuous. 
Then  the problem \eqref{pde2} has at most one solution with 
derivatives bounded by $C(1+|x|)$, if $\nu_i \,dx\in  \mathcal{P}_{1}(\mathbb{R})$ and  there exists $\lambda >0$ such that the matrices
  $$M_1:= \left( \begin{array}{cc}
\l\a_1 & \l \b_1 \\
\a_2 &  \b_2\end{array} \right) ,
\qquad M_2:=  \left( \begin{array}{cc}
\l\g_1 & \l \d_1 \\
\g_2 &  \d_2\end{array} \right)$$ 
are, respectively,  positive definite and positive semi-definite;
\noindent
on the other hand,
it has at least two solutions with derivatives bounded  
 by $C(1+|x|)$ if $\nu_i \in \tilde{ \mathcal{P}}_1(\R)$, $M(\nu_i)=0$, and either
 \begin{equation}
 \label{ex-1nonuniq}
	\a_i, \b_i, \g_i, \d_i \leq 0 , \quad \g_i+\d_i<0 ,  \quad i=1, 2 ,
  \end{equation}
	or
	 \begin{equation}
 \label{ex-2nonuniq}
	\a_1, \b_2, \g_1, \d_2 \leq 0 , \quad \a_2, \b_1, \g_2, \d_1 \geq 0 , \quad \g_1<\d_1 , \quad \g_2>\d_2.
	  \end{equation}
\end{cor}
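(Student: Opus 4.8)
The plan is to deduce both halves of the statement from results already established for two-population systems, in close analogy with Corollary~\ref{comparison}: the uniqueness assertion from Theorem~\ref{uniq-th}, and the non-uniqueness assertion from Propositions~\ref{nonex5} and \ref{nonex6}. In all cases the linear structure \eqref{Fi-lin}, \eqref{Gi-lin} makes the relevant hypotheses reduce to elementary sign/definiteness conditions on the coefficient matrices, and the $d_1$-continuous remainders $f_i, g_i$ turn out to be harmless.

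\textbf{Uniqueness.} First I would note that the structural requirements of Theorem~\ref{uniq-th} --- $H_i$ convex and Lipschitz, $x\mapsto F_i, G_i$ depending on $(\mu_1,\mu_2)$ only through $M(\mu_1), M(\mu_2)$, and linear growth in $x$ --- are immediate from \eqref{H_i}, \eqref{Fi-lin}, \eqref{Gi-lin}. The substance is to translate the definiteness of $M_1, M_2$ into the monotonicity inequalities \eqref{mono2}, \eqref{mono3}. Choosing $\lambda_1 = \lambda$, $\lambda_2 = 1$ with $\lambda$ as in the statement, and taking $(\mu_1,\mu_2), (\bar\mu_1,\bar\mu_2)$ with $C^2$ densities, I would use that $\int_\R(f_i(\mu_1,\mu_2)-f_i(\bar\mu_1,\bar\mu_2))\,d(\mu_i-\bar\mu_i) = 0$ (and likewise for $g_i$) because $\mu_i, \bar\mu_i$ have the same total mass, so these contributions drop out. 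Setting $z_j := M(\mu_j) - M(\bar\mu_j)$ and using $\int_\R x\,d(\mu_i-\bar\mu_i) = z_i$, a short computation gives
\[
\int_\R \sum_{i=1}^{2} \lambda_i\,[F_i(x,\mu_1,\mu_2) - F_i(x,\bar\mu_1,\bar\mu_2)]\,d(\mu_i-\bar\mu_i)(x) = \lambda\alpha_1 z_1^2 + (\lambda\beta_1 + \alpha_2)z_1 z_2 + \beta_2 z_2^2 = (z_1,z_2)\,M_1\,(z_1,z_2)^{T},
\]
and similarly the left-hand side of \eqref{mono3} equals $(z_1,z_2)\,M_2\,(z_1,z_2)^{T}$. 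Since the hypothesis ``$M(\mu_1)\ne M(\bar\mu_1)$ or $M(\mu_2)\ne M(\bar\mu_2)$'' of \eqref{mono2} means exactly $(z_1,z_2)\ne(0,0)$, positive definiteness of $M_1$ (that is, of its symmetric part) yields \eqref{mono2} and positive semi-definiteness of $M_2$ yields \eqref{mono3}; Theorem~\ref{uniq-th} then gives uniqueness in the stated class.

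\textbf{Non-uniqueness.} For both alternatives I would verify the qualitative sign hypotheses of Propositions~\ref{nonex5} and \ref{nonex6}. Since $D_xF_i(x,\mu_1,\mu_2) = \alpha_iM(\mu_1) + \beta_iM(\mu_2)$ and $D_xG_i(x,\mu_1,\mu_2) = \gamma_iM(\mu_1) + \delta_iM(\mu_2)$ are independent of $x$, assumptions (F1), (G1) hold exactly as in Examples~\ref{ese1} and \ref{ese2}. Under \eqref{ex-1nonuniq}: if $M(\mu_1), M(\mu_2) > 0$ then $\alpha_i,\beta_i\le 0$ force $D_xF_i\le 0$, while $\gamma_i,\delta_i\le 0$ together with $\gamma_i+\delta_i<0$ force $D_xG_i < 0$; reversing signs when $M(\mu_1), M(\mu_2) < 0$ gives \eqref{ass-syst-F-1} and \eqref{ass-syst-G-1} (the $\not\equiv 0$ clause being automatic from $\gamma_i+\delta_i<0$). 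Proposition~\ref{nonex5} then produces a solution with $\partial_x v_i < 0$ and $M(m_i(t)) = b_it > 0$ and one with $\partial_x v_i > 0$ and $M(m_i(t)) = a_it < 0$; these are non-equal since $a_i < 0 < b_i$. Under \eqref{ex-2nonuniq}: the sign pattern $\alpha_1,\beta_2,\gamma_1,\delta_2\le 0$ and $\alpha_2,\beta_1,\gamma_2,\delta_1\ge 0$ gives \eqref{ass-syst-F-2}, \eqref{ass-syst-F-3}, while $\gamma_1<\delta_1$ and $\gamma_2>\delta_2$ make $D_xG_1$ and $D_xG_2$ nowhere zero with the required signs, so \eqref{ass-syst-G-2}, \eqref{ass-syst-G-3} hold and Proposition~\ref{nonex6} yields the two solutions. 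In either case the bound $C(1+|x|)$ on the derivatives of $v_i$ follows from the linear equations \eqref{v_i_eq}, the linear growth of $F_i, G_i$ in $x$, and the boundedness of $t\mapsto M(m_i(t))$, just as in part ii) of Corollary~\ref{comparison}.

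The main difficulty is purely organisational: on the uniqueness side, recognizing that the non-symmetric matrices $M_1, M_2$ enter only through the quadratic forms $(z_1,z_2)M_j(z_1,z_2)^{T}$ (so ``positive (semi-)definite'' must be read via the symmetric part), and checking that the $f_i, g_i$ terms --- being constant in $x$ and integrated against a signed measure of total mass zero --- never appear; on the non-uniqueness side, confirming that the strict inequalities $\gamma_i+\delta_i<0$ in \eqref{ex-1nonuniq} and $\gamma_1<\delta_1$, $\gamma_2>\delta_2$ in \eqref{ex-2nonuniq} are precisely what is needed to secure the $D_xG_i\not\equiv 0$ requirements.
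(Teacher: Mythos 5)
Your proposal is correct and follows essentially the same route as the paper: normalizing $\lambda_2=1$, $\lambda_1=\lambda$, reducing \eqref{mono2}--\eqref{mono3} to the quadratic forms $(z_1,z_2)M_j(z_1,z_2)^{T}$ in $z_j=M(\mu_j)-M(\bar\mu_j)$ and invoking Theorem \ref{uniq-th} for uniqueness, then deducing the two non-uniqueness cases from Propositions \ref{nonex5} and \ref{nonex6}. The extra details you supply (the $f_i,g_i$ terms vanishing against a signed measure of zero total mass, the explicit sign checks showing $\gamma_i+\delta_i<0$ and $\gamma_1<\delta_1$, $\gamma_2>\delta_2$ yield the $\not\equiv 0$ clauses, and the $C(1+|x|)$ bound) are all consistent with what the paper leaves implicit.
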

   \begin{proof} 
   It is easy to compute the integral in \eqref{mono2} and get, after normalizing $\l_2$ to 1 and setting $\l_1=\l$,
   \[
   \l\a_1(M(m_1)-M(\bar m_1))^2 + (\l\b_1+\a_2)(M(m_1)-M(\bar m_1))(M(m_2)-M(\bar m_2)) +\b_2(M(m_2)-M(\bar m_2))^2,
   \]
   so condition  \eqref{mono2} is satisfied if the matrix 
  $M_1$ %
is positive definite.  
Similarly, condition  \eqref{mono3} is satisfied if the quadratic form associated to $M_2$ 
is positive semi-definite. 

As for  non-uniqueness,  the first statement follows from Prop. \ref{nonex5}  and the second from Prop. \ref{nonex6}. 
\end{proof}
\begin{rem}  \upshape
The conditions for the definiteness of $M_1$
 \begin{equation}
  \label{ex-uniq}
 \a_1>0 ,  \qquad \b_2>0 ,  \qquad \l\a_1\b_2>(\l\b_1+\a_2)^2/4 ,
  \end{equation}
require not only the form of crowd-aversion within each population 
 explained in Remark \ref{crowd}  
for the case of a single population, but also that the costs for intraspecific interactions are dominant over the costs of the interactions of a population with the other. The same holds for the conditions of semi-definiteness of $M_2$, i.e.,
\[
\g_1\geq 0 , \qquad \d_2 \geq 0, \quad 
\qquad   \l\g_1\d_2\geq(\l\d_1+\g_2)^2/4 .
\]
On the other hand, the hypotheses of the examples of non-uniqueness \eqref{ex-1nonuniq} or \eqref{ex-2nonuniq} hold only if the intraspecific costs are 
null or imitation is rewarding within each population.

Note also that the gap between the sufficient conditions for uniqueness and for non-uniqueness is larger here than in the case of a single population.
\end{rem}

\begin{ex}
\upshape
Consider a terminal cost $G$ as in Corollary \ref{cor-linear} and satisfying
\[
\g_1, \d_2 \geq 0 , \qquad 
 \d_1=\g_2=0 ,
\]
so that, in particular, there is no cost or gain for interspecific interactions at the terminal time $T$.
Then the sufficient condition for uniqueness reduces to \eqref{ex-uniq} for some $\l >0$, 
and this holds under the simple conditions 
\begin{equation}
\label{uniq-F}
\a_1, \b_2 >0 , \quad \a_1\b_2 >\b_1 \a_2 ,
\end{equation}
as it can be easily seen by choosing $\l=\frac{2\a_1\b_2 - \b_1\a_2}{\b_1^2}$ if $\b_1\ne0$, and $\l=\frac{2\a_1\b_2 - \b_1\a_2}{\a_2^2}$ if $\a_2\ne0$.
\end{ex}
\begin{rem} 
 \upshape
If Theorem \ref{uniq-th} is specialized to stationary equations, it improves slightly upon the uniqueness result in \cite{Cir} because it allows to choose the parameters $\l_i$. This can be seen in the case of local and linear costs
\[
 F_i(x,\mu_1,\mu_2)=\a_i  \mu_1(x) + \beta_i  \mu_2(x)
 , \quad i=1, 2 ,
	\]
	In fact, the integrand of  the integral in \eqref{mono2} is
	\[
   \l\a_1(m_1 - \bar m_1)(x)^2 + (\l\b_1+\a_2)(m_1 - \bar m_1)(x)(m_2 - \bar m_2)(x) +\b_2(m_2 - \bar m_2)(x)^2,
   \]
and then the condition  \eqref{mono2} is satisfied again if the matrix $M_1$ is positive definite. 
Hence \eqref{uniq-F} is 
 a sufficient condition for uniqueness of the stationary MFG equations for this local case, more general than the assumption in  \cite{Cir}.
\end{rem}
\begin{rem} 
 \upshape
A uniqueness result with assumptions of short time horizon and smooth Hamiltonian replacing the convexity of $H$ and monotonicity of the costs, similar to Theorem \ref{st-uniq}, can be proved also for systems with several population such as \eqref{pde2}. This 
is done in \cite{BC} for Neumann boundary conditions in bounded domains.
\end{rem}
%

\begin{appendix}

\section{Density estimate}

Let $\sigma > 0$. Let $b\!: [0,T]\times \mathbb{R}^{d}\rightarrow \mathbb{R}^{d}$ be bounded and measurable. Let $\nu\in \mathcal{P}(\mathbb{R}^{d})$ be such that $\nu$ is absolutely continuous with respect to Lebesgue measure with bounded density, that is,
\[¤
	\frac{d\nu}{d\lambda_{d}}(.) = m_{0}(.)
\]
for some bounded and measurable $m_{0}\!: \mathbb{R}^{d}\rightarrow [0,\infty)$ with $\int m_{0}(x)dx = 1$.

Let $(\Omega,\mathcal{F},(\mathcal{F}_{t}),\Prb)$ be a standard filtered probability space carrying a $d$-dimensional $(\mathcal{F}_{t})$-Wiener process, an $\mathbb{R}^{d}$-valued $\mathcal{F}_{0}$-measurable random variable $\xi$ with distribution $\Prb\circ (\xi)^{-1} = \nu$ and an $\mathbb{R}^{d}$-valued continuous $(\mathcal{F}_{t})$-adapted process $X$ such that 
\begin{equation} \label{AppEqSDE}
	X(t) = \xi + \int_{0}^{t} b\left(s,X(s)\right)ds + \sigma W(t),\quad t\in [0,T].
\end{equation}
We also assume---as we may---that $(\Omega,\mathcal{F})$ is a Borel space.

Let $\mathfrak{m}$ be the flow of marginal distributions of $X$:
\[
	\mathfrak{m}(t)\doteq \Prb\circ (X(t))^{-1},\quad t\in [0,T].
\]
Lastly, for $t\in (0,T]$, let $p_{t}$ denote the density of the $d$-variate Gaussian distribution with mean zero and covariance matrix $t \Id_{d}$:
\[
	p_{t}(y) \doteq (2\pi t)^{-d/2} e^{-\frac{|y|^{2}}{2t}},\quad y\in \mathbb{R}^{d}.
\]

For the following $L^{\infty}$-estimate on the Lebesgue densities of the flow of measures  $\mathfrak{m}$, we use the Girsanov transformation and a conditioning argument in the spirit of Exercise~7.4 in \cite[p.\,170]{friedman75}. In \cite{qianetal03}, sharp estimates on the transition probability densities are obtained through more sophisticated  probabilistic methods.

\begin{prop}
\label{prop_app}
Under the above assumptions, the marginal distribution of $X$ at any time $t\in [0,T]$ is absolutely continuous with respect to Lebesgue measure with bounded density. More precisely, for every $t\in (0,T]$, there exists a bounded measurable function $m_{t}\!: \mathbb{R}^{d}\rightarrow [0,\infty)$ with $\int m_{t}(x)dx = 1$ such that
\begin{align*}
	& \frac{d\mathfrak{m}(t)}{d\lambda_{d}}(.) = m_{t}(.) & &\text{and}& \| m_{t} \|_{\infty} \leq \hat{C}_{t} \cdot \|m_{0}\|_{\infty},
\end{align*}
where $\hat{C}_{t} = \hat{C}_{t}(\sigma,\|b\|_{\infty}, d)$ is a finite constant that 
 need not be greater than
\[
	{ \left(e^{\frac{8 \|b\|_{\infty}^{2}}{\sigma^{2}}t} + \frac{4\|b\|_{\infty}}{\sigma} \sqrt{2\pi t}\cdot e^{\frac{16 \|b\|_{\infty}^{2}}{\sigma^{2}}t} \right)^{d/4} } \cdot e^{\frac{\|b\|_{\infty}^{2}}{2\sigma^{2}} t} \cdot \int_{\mathbb{R}^{d}} e^{\frac{\|b\|_{\infty}}{\sigma^2}|x|} p_{\sigma^{2} t}(x) dx.
\] 

\end{prop}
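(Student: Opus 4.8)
The plan is to remove the drift by a Girsanov transformation, extract an explicit integral representation of the density $m_t$ from a disintegration of the transformed law, and then reduce the whole estimate to a uniform bound on a conditioned stochastic exponential, proved by a Brownian bridge computation; this is the ``spirit of Exercise~7.4 in \cite{friedman75}'' alluded to above. Since $b$ is bounded, Novikov's condition holds, so $\mathcal{E}_s\doteq \exp\!\bigl(-\tfrac1\sigma\int_0^s b(r,X(r))\,dW(r)-\tfrac1{2\sigma^2}\int_0^s|b(r,X(r))|^2\,dr\bigr)$ is a genuine $(\mathcal{F}_s)$-martingale and $d\mathbb{Q}\doteq \mathcal{E}_T\,d\Prb$ defines a probability measure on $\mathcal{F}_T$. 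Under $\mathbb{Q}$, the process $\tilde{W}(s)\doteq W(s)+\tfrac1\sigma\int_0^s b(r,X(r))\,dr$ is a Wiener process, $\xi$ still has law $\nu$ and is independent of $\tilde{W}$, and $X(s)=\xi+\sigma \tilde{W}(s)$. Hence the $\mathbb{Q}$-law of $X(t)$ is $\nu\ast \mathcal{N}(0,\sigma^{2}t\,\Id_d)$, whose Lebesgue density $q_t=m_0\ast p_{\sigma^{2}t}$ is bounded by $\|m_0\|_\infty$, and $\tfrac{d\Prb}{d\mathbb{Q}}\big|_{\mathcal{F}_t}=\tilde{\mathcal{E}}_t\doteq \exp\!\bigl(M_t-\tfrac12\langle M\rangle_t\bigr)$ with $M_s\doteq \tfrac1\sigma\int_0^s b(r,X(r))\,d\tilde{W}(r)$ and $\langle M\rangle_t\le \|b\|_\infty^{2}t/\sigma^{2}$.

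Because $(\Omega,\mathcal{F})$ is a Borel space, disintegration with respect to the pair $(\xi,\tilde{W}(t))$ and changing measure back gives, for every bounded measurable $\phi\ge 0$, the identity $\Mean_\Prb[\phi(X(t))]=\Mean_{\mathbb{Q}}[\phi(X(t))\tilde{\mathcal{E}}_t]=\int\!\!\int m_0(x_0)\phi(y)\,p_{\sigma^{2}t}(y-x_0)\,\kappa_t\!\bigl(x_0,\tfrac{y-x_0}\sigma\bigr)\,dy\,dx_0$, where $\kappa_t(x_0,w)\doteq \Mean_{\mathbb{Q}}\bigl[\tilde{\mathcal{E}}_t\mid \xi=x_0,\ \tilde{W}(t)=w\bigr]$. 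Thus $X(t)$ has a measurable density $m_t$ with integral $1$; changing variables $x_0=y-\sigma w$ (so $\sigma^d p_{\sigma^{2}t}(\sigma w)=p_t(w)$) and using $q_t\le\|m_0\|_\infty$, it suffices to prove the pointwise bound $\kappa_t(x_0,w)\le C_t\,e^{\|b\|_\infty|w|/\sigma}$, uniformly in $x_0$, with $C_t$ the product of the first two factors of $\hat{C}_t$: indeed $m_t(y)\le \|m_0\|_\infty C_t\int p_t(w)e^{\|b\|_\infty|w|/\sigma}\,dw$, and substituting $x=\sigma w$ turns the last integral into $\int e^{\|b\|_\infty|x|/\sigma^{2}}p_{\sigma^{2}t}(x)\,dx$, which is exactly the remaining factor of $\hat{C}_t$.

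To estimate $\kappa_t$, condition on $\{\xi=x_0,\ \tilde{W}(t)=w\}$ and represent the $\mathbb{Q}$-conditional law of $\tilde{W}$ as $\tilde{W}(s)=\tfrac st w+\beta(s)$ with $\beta$ a standard $d$-dimensional Brownian bridge on $[0,t]$ independent of the conditioning; then $M_t=\tfrac1{\sigma t}\bigl\langle \int_0^t b(s,X(s))\,ds,\,w\bigr\rangle+\tfrac1\sigma\int_0^t b(s,X(s))\,d\beta(s)$, and the first summand has modulus $\le \|b\|_\infty|w|/\sigma$, producing the factor $e^{\|b\|_\infty|w|/\sigma}$. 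For the bridge part one combines, schematically: conditional Jensen, $\kappa_t\le (\Mean[\tilde{\mathcal{E}}_t^{2}\mid\cdot])^{1/2}$; the factorization $\tilde{\mathcal{E}}_t^{2}=\exp(2M_t-2\langle M\rangle_t)\,e^{\langle M\rangle_t}$, whose first factor is a true martingale and whose second is $\le e^{\|b\|_\infty^{2}t/\sigma^{2}}$ (source of the $e^{\|b\|_\infty^{2}t/(2\sigma^{2})}$ factor); the bridge identity $d\beta(s)=-\tfrac{\beta(s)}{t-s}\,ds+dB(s)$ together with Cauchy--Schwarz to split the factor $\exp$ of a multiple of $\int_0^t b\,dB$ from the factor $\exp$ of a multiple of $\int_0^t \tfrac{b(s,X(s))\beta(s)}{t-s}\,ds$; the former is controlled by the martingale property of stochastic exponentials of $\int b\,dB$ (whence the $e^{8\|b\|_\infty^{2}t/\sigma^{2}}$ terms), while for the latter one uses $|\beta(s)|\le\sum_{i=1}^d|\beta_i(s)|$ and the independence of the one-dimensional bridges, so that the bound becomes a $d$-th power of an exponential moment of $\int_0^t|\beta_1(s)|/(t-s)\,ds$, the two square roots from Jensen and Cauchy--Schwarz reducing the exponent to $d/4$. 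A careful but elementary exponential-moment estimate for that one-dimensional functional, based on $\Mean\bigl[\int_0^t |\beta_1(s)|/(t-s)\,ds\bigr]=\tfrac12\sqrt{2\pi t}$, accounts for the $\tfrac{4\|b\|_\infty}\sigma\sqrt{2\pi t}\,e^{16\|b\|_\infty^{2}t/\sigma^{2}}$ summand inside the bracket, and collecting constants yields $C_t$ and hence the stated form of $\hat{C}_t$.

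The only genuinely delicate point is this last step: conditioning on the terminal value $\tilde{W}(t)$ destroys the adaptedness and martingale structure that make $\tilde{\mathcal{E}}_t$ easy to control, so the conditioned exponential must be analysed through the Brownian bridge and its singular drift $-\beta(s)/(t-s)$, and controlling $\exp$ of a stochastic integral against the bridge (rather than against a martingale) is where all the loss occurs. The rather heavy form of $\hat{C}_t$ is precisely the price paid for keeping this estimate elementary; sharp bounds, as noted in the text, require the more sophisticated methods of \cite{qianetal03}.
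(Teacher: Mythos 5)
Your overall strategy is the paper's: remove the drift by a Girsanov transformation, condition the driving Wiener process on its terminal value so that the density appears as a Gaussian kernel weighted by a conditioned stochastic exponential (your $\kappa_t$, the paper's $\Psi_t$), and control that weight by extracting the bridge's singular drift, applying Cauchy--Schwarz, and using the (super)martingale property of stochastic exponentials. The only structural difference is cosmetic: you change measure once globally so that $X=\xi+\sigma\tilde W$ under $\mathbb{Q}$ and then disintegrate, while the paper applies Girsanov separately from each starting point $x$ to the driftless process $Y^x=x+\sigma W$ and integrates against $\nu$; these are equivalent.

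There is, however, one genuine gap, and it sits at the heart of the estimate. To bound the conditioned exponential you must control $\Mean\bigl[\exp\bigl(c\int_0^t|\beta_1(s)|/(t-s)\,ds\bigr)\bigr]$, equivalently $\Mean\bigl[\exp\bigl(c\int_0^t\bigl|\int_0^s (t-r)^{-1}dW_1(r)\bigr|\,ds\bigr)\bigr]$, and you justify this only ``based on $\Mean[\int_0^t|\beta_1(s)|/(t-s)\,ds]=\tfrac12\sqrt{2\pi t}$''. A first moment cannot control an exponential moment --- a priori the latter could be infinite --- and it is not even obvious that this functional has the required tails, since the integrand blows up like $(t-s)^{-1}$ as $s\to t$. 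The paper's proof of this step, inequality \eqref{AppBridgeInt}, is the real work: by the It{\^o}--Tanaka formula, $\int_0^t|\int_0^s(t-r)^{-1}dW_1(r)|\,ds = 2\tilde L^{t,0}_t(0)+\int_0^t\sgn(\tilde W^{t,0}_1(s))\,dW_1(s)$, i.e.\ twice the local time at the origin of the Brownian bridge plus a stochastic integral with bounded integrand; the second term is handled by the stochastic-exponential trick and the first by the explicit (Rayleigh) law of the bridge local time. Without this identity, or some substitute for it, your argument does not close. A secondary point: your order of operations --- conditional Jensen to pass to $\Mean[\tilde{\mathcal E}_t^{\,2}\mid\cdot]^{1/2}$ \emph{before} splitting off the drift terms, followed by a further Cauchy--Schwarz --- doubles the relevant coefficients once more than the paper's argument (which pulls the $|y|$-dependent drift out deterministically and uses a single Cauchy--Schwarz), so followed literally it yields a bound of the stated form but with larger constants than the displayed $\hat C_t$; since the proposition asserts that specific constant, the bookkeeping needs to be arranged as in the paper.
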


\begin{proof}
For $x\in \mathbb{R}^{d}$, define the process $Y^{x} = (Y^{x}_{1},\ldots,Y^{x}_{d})$ through
\[
	Y^{x}(t) \doteq x + \sigma W(t),\quad t\in [0,T],
\]
and a process $Z^{x}$ over $[0,T]$ by
\[
\begin{split}
	Z^{x}(t) &\doteq \exp\left(\frac{1}{\sigma} \int_{0}^{t} b\left(s, Y^{x}(s) \right)\cdot dW(s) - \frac{1}{2\sigma^{2}} \int_{0}^{t} \left| b\left(s, Y^{x}(s) \right) \right|^{2} ds \right) \\
	&= \exp\left(\frac{1}{\sigma} \sum_{i=1}^{d} \int_{0}^{t} b_{i}\left(s, Y^{x}(s) \right) dW_{i}(s) - \frac{1}{2\sigma^{2}} \int_{0}^{t} \sum_{i=1}^{d} \left| b_{i}\left(s, Y^{x}(s) \right) \right|^{2} ds \right).
\end{split}
\]

Girsanov's theorem (for instance, Chapter~7 in \cite{friedman75} or Sections 3.5 and 5.3.B in \cite{karatzasshreve91}) and the fact that $\xi$ and $W$ are independent yield, for $t\in [0,T]$, every bounded measurable function $g\!: \mathbb{R}^{d}\rightarrow \mathbb{R}$,
\[
	\Mean\left[ g\left( X(t) \right) \right] = \int_{\mathbb{R}^{d}} \Mean\left[ g\left( Y^{x}(t) \right) Z^{x}(t)  \right] \nu(dx).
\]
This implies (taking indicator functions for $g$) that
\begin{equation} \label{AppEqGirsanov}
	\mathfrak{m}(t,B) = \int_{\mathbb{R}^{d}} \Mean\left[ \mathbf{1}_{B}\left( Y^{x}(t) \right) Z^{x}(t)  \right] \nu(dx) \text{ for all } B\in \mathcal{B}(\mathbb{R}^d).
\end{equation}
By construction, $\mathfrak{m}(0) = \nu$. Fix $t\in (0,T]$.

For $y\in \mathbb{R}^{d}$ define a process $\tilde{W}^{t,y}$ over $[0,t]$ by
\[
	\tilde{W}^{t,y}(s) \doteq \begin{cases}
		\frac{s}{t}y + (t-s) \int_{0}^{s} \frac{1}{t-r} dW(r) &\text{if } s\in [0,t) \\
		y &\text{if } s = t.
	\end{cases}
\]
Then $\tilde{W}^{t,y}$ is a $d$-dimensional Brownian bridge (with respect to $\Prb$) from $0$ to $y$ over time interval $[0,t]$. The process $\tilde{W}^{t,y}$ is continuous on $[0,t]$ $\Prb$-almost surely; cf.\ Section~5.6.B in \cite[pp.\,358-360]{karatzasshreve91}; its It{\^o} differential is given by
\begin{equation} \label{AppBridgeDifferential}
	d\tilde{W}^{t,y}(s) = \frac{y}{t}ds - \left(\int_{0}^{s} \frac{1}{t-r}dW(r) \right)ds + dW(s),\quad s\in [0,t].
\end{equation}
That \eqref{AppBridgeDifferential} holds if $s < t$ is clear by It{\^o}'s formula and the definition of $\tilde{W}^{t,y}$. To see that it also holds if $s = t$, apply Fubini's theorem, then the Cauchy-Schwarz inequality followed by the It{\^o} isometry, to find that
\begin{multline*}
	\Mean\left[ \int_{0}^{t} \left|\int_{0}^{s} \frac{1}{t-r} dW(r) \right| ds \right] = \int_{0}^{t} \Mean\left[ \left|\int_{0}^{s} \frac{1}{t-r} dW(r) \right| \right] ds \\
	\leq \int_{0}^{t} \sqrt{\Mean\left[ \int_{0}^{s} \frac{dr}{(t-r)^2} \right]} ds = \int_{0}^{t} \sqrt{\frac{s}{t(t-s)}}ds \leq \int_{0}^{t} \frac{ds}{\sqrt{t-s}} < \infty.
\end{multline*}
Using also the continuity of $\tilde{W}^{t,y}$, it follows that with $\Prb$-probability one, for all $t_{1}, t_{2} \in [0,t]$ such that $t_{1} \leq t_{2}$,
\[
	\tilde{W}^{t,y}(t_{2}) - \tilde{W}^{t,y}(t_{1}) = \frac{t_{2}-t_{1}}{t}y - \int_{t_{1}}^{t_{2}} \left(\int_{0}^{s} \frac{1}{t-r}dW(r) \right)ds + W(t_{2}) - W(t_{1}),
\]
which is equivalent to \eqref{AppBridgeDifferential}. In particular, $\tilde{W}^{t,y}$ is a vector of continuous semimartingales on $[0,t]$, with the same cross-variation processes as $W$. Moreover (for instance, Exercise 5.6.17 in \cite[p.\,361]{karatzasshreve91}), we have for all $\hat{g}\!: \mathbf{C}([0,t],\mathbb{R}^{d}) \rightarrow \mathbb{R}$ bounded and measurable,
\begin{equation} \label{AppBMBridge}
	\Mean\left[ \hat{g}(W) \right] = \int_{\mathbb{R}^{d}} \Mean\left[ \hat{g}(\tilde{W}^{t,y}) \right] p_{t}(y) dy,
\end{equation}
where $p_{t}$ is the density of the $d$-variate Gaussian distribution with mean zero and covariance matrix $t \Id_{d}$. Formula \eqref{AppBMBridge} corresponds to conditioning the Wiener process $W$ on its values at time $t$. To be more precise, we choose a regular conditional distribution of $\Prb$ given $W(t)$; cf.\ Theorem~6.3 in \cite[p.\,107]{kallenberg01}. Since $(\Omega,\mathcal{F})$ is Borel, there exists a probability kernel $\kappa_{t}\!: \mathbb{R}^{d}\times \mathcal{F} \rightarrow [0,1]$ such that for every $A\in \mathcal{F}$,
\[
	\kappa_{t}(W(t),A) = \Mean\left[ \mathbf{1}_{A} \,\big|\, W(t) \right] \quad \Prb\text{-almost surely.}
\]
The probability measures $\kappa_{t}(y,\cdot)$, $y\in \mathbb{R}^{d}$, are uniquely determined Lebesgue almost everywhere. In view of \eqref{AppBMBridge}, we have
\begin{equation} \label{AppBMBridgeKernel}
	\kappa_{t}(y,\cdot)\circ (W)^{-1} = \Prb\circ (\tilde{W}^{t,y})^{-1} \text{ for Lebesgue almost every } y\in \mathbb{R}^{d}.
\end{equation}

We are going to apply the above regular conditional distribution to representation \eqref{AppEqGirsanov}. By construction, for all $B\in \mathcal{B}(\mathbb{R}^{d})$, $x\in \mathbb{R}^{d}$,
\begin{multline*}
	\Mean\left[ \mathbf{1}_{B}\left( Y^{x}(t) \right) Z^{x}(t)  \right] = \Mean\left[ \mathbf{1}_{B}\left( x + \sigma W(t) \right) \exp\left(\frac{1}{\sigma} \int_{0}^{t} b\left(s, x + \sigma W(s) \right)\cdot dW(s) \right.\right.\\
	\left.\left. - \frac{1}{2\sigma^{2}} \int_{0}^{t} \left| b\left(s, x + \sigma W(s) \right) \right|^{2} ds \right)  \right].
\end{multline*}
Recalling that $p_{t}$ is the density of the law of $W(t)$, we find that
\[
	\Mean\left[ \mathbf{1}_{B}\left( Y^{x}(t) \right) Z^{x}(t)  \right] = \int_{\mathbb{R}^{d}} \Mean_{\kappa_{t}(y,\cdot)}\left[ \mathbf{1}_{B}\left( Y^{x}(t) \right) Z^{x}(t)  \right] p_{t}(y) dy.
\]
Setting
\[
	\Psi_{t}(x,y) \doteq \Mean\left[ \exp\left(\frac{1}{\sigma} \int_{0}^{t} b\left(s, x + \sigma \tilde{W}^{t,y}(s) \right)\cdot d\tilde{W}^{t,y}(s)
	 - \frac{1}{2\sigma^{2}} \int_{0}^{t} \left| b\left(s, x + \sigma \tilde{W}^{t,y}(s) \right) \right|^{2} ds \right)  \right]
\]
we have, by \eqref{AppBMBridgeKernel},
\begin{equation} \label{AppGirsanovBridgex}
	\Mean\left[ \mathbf{1}_{B}\left( Y^{x}(t) \right) Z^{x}(t)  \right] = \int_{\mathbb{R}^{d}} \mathbf{1}_{B}\left( x + \sigma y \right) \Psi_{t}(x,y) p_{t}(y) dy.
\end{equation}

Thanks to Eq.~\eqref{AppBridgeDifferential}, $\Psi_{t}(x,y)$ can be expressed as
\[
\begin{split}
	\Psi_{t}(x,y) &= \Mean\left[ \exp\left(\frac{1}{\sigma t} \int_{0}^{t} y\cdot b\left(r, x + \sigma \tilde{W}^{t,y}(s) \right)ds \right) \right.\\
	&\cdot \exp\left( -\frac{1}{\sigma} \int_{0}^{t} b\left(r, x + \sigma \tilde{W}^{t,y}(s) \right)\cdot \left( \int_{0}^{s} \frac{1}{t-r} dW(r)\right) ds \right) \\
	&\cdot \left. 
	\exp\left(\frac{1}{\sigma} \int_{0}^{t} b\left(r, x + \sigma \tilde{W}^{t,y}(s) \right)\cdot dW(s) - \frac{1}{2\sigma^{2}} \int_{0}^{t} \left| b\left(r, x + \sigma \tilde{W}^{t,y}(s) \right) \right|^{2} ds 
	 \right) \right].
\end{split}
\]
Below, we will show that for every $c\in [0,\infty)$,
\begin{equation} \label{AppBridgeInt}
	\Mean\left[ \exp\left(c \sum_{i=1}^{d} \int_{0}^{t} \left|\int_{0}^{s} \frac{1}{t-r} dW_{i}(r) \right| ds \right) \right] \leq \left(e^{2c^{2}t} + 2c\sqrt{2\pi t}\cdot e^{4c^{2} t} \right)^{d/2} < \infty.
\end{equation}
Thus, $\Psi_{t}$ is well defined as a (measurable) function $\mathbb{R}^{d}\times \mathbb{R}^{d} \rightarrow (0,\infty)$. By inequality \eqref{AppBridgeInt}, the boundedness of $b$, the Cauchy-Schwarz inequality and the (super-)martingale property of the stochastic exponential of a martingale, we obtain
\begin{align*}
\begin{split}
	\Psi_{t}(x,y) &\leq e^{\frac{\|b\|_{\infty}}{\sigma}|y|} \cdot \Mean\left[ \exp\left( \frac{2\|b\|_{\infty}}{\sigma} \sum_{i=1}^{d} \int_{0}^{t} \left| \int_{0}^{s} \frac{1}{t-r} dW_{i}(r)\right| ds \right) \right]^{1/2} \\
	&\cdot \Mean\left[ e^{ \frac{1}{\sigma} \int_{0}^{t} 2b\left(r, x + \sigma \tilde{W}^{t,y}(s) \right)\cdot dW(s) - \frac{1}{4\sigma^{2}} \int_{0}^{t} \left| 2b\left(r, x + \sigma \tilde{W}^{t,y}(s) \right) \right|^{2} ds}  \right]^{1/2}.
\end{split}\\
\begin{split}
	&\leq e^{\frac{\|b\|_{\infty}}{\sigma}|y|} \cdot \left(e^{\frac{8 \|b\|_{\infty}^{2}}{\sigma^{2}}t} + \frac{4\|b\|_{\infty}}{\sigma} \sqrt{2\pi t}\cdot e^{\frac{16 \|b\|_{\infty}^{2}}{\sigma^{2}}t} \right)^{d/4} \\
	&\cdot \underbrace{\Mean\left[ e^{ \frac{1}{\sigma} \int_{0}^{t} 2b\left(r, x + \sigma \tilde{W}^{t,y}(s) \right)\cdot dW(s) - \frac{1}{2\sigma^{2}} \int_{0}^{t} \left| 2b\left(r, x + \sigma \tilde{W}^{t,y}(s) \right) \right|^{2} ds}  \right]^{1/2} }_{=1} \cdot e^{\frac{\|b\|_{\infty}^{2}}{2\sigma^{2}} t},
\end{split}
\end{align*}
hence
\begin{equation} \label{AppPsiEstimate}
	\sup_{x\in \mathbb{R}^{d}} \Psi_{t}(x,y) \leq C_{t}\cdot e^{\frac{\|b\|_{\infty}}{\sigma}|y|},\quad y\in \mathbb{R}^{d},
\end{equation}
where the finite constant $C_{t} = C_{t}(\sigma,\|b\|_{\infty},d)$ is given by
\[
	C_{t}(\sigma,\|b\|_{\infty}, d)\doteq \left(e^{\frac{8 \|b\|_{\infty}^{2}}{\sigma^{2}}t} + \frac{4\|b\|_{\infty}}{\sigma} \sqrt{2\pi t}\cdot e^{\frac{16 \|b\|_{\infty}^{2}}{\sigma^{2}}t} \right)^{d/4} \cdot e^{\frac{\|b\|_{\infty}^{2}}{2\sigma^{2}} t}.
\]

Recalling \eqref{AppEqGirsanov}, \eqref{AppGirsanovBridgex} and the hypothesis that $\nu$ has density $m_{0}$, we see that for all $B\in \mathcal{B}(\mathbb{R^{d}})$,
\begin{align*}
	\mathfrak{m}(t,B) &= \int_{\mathbb{R}^{d}} \int_{\mathbb{R}^{d}} \mathbf{1}_{B}\left( x + \sigma y \right) \Psi_{t}(x,y) p_{t}(y) m_{0}(x) dxdy \\
	&= \int_{\mathbb{R}^{d}} \int_{\mathbb{R}^{d}} \mathbf{1}_{B}(z) \Psi_{t}\left(x, \frac{z-x}{\sigma} \right) p_{\sigma^{2} t}(z-x) m_{0}(x) dxdz.
\end{align*}
It follows that $\mathfrak{m}(t)$ possesses a density with respect to Lebesgue measure:
\[
	\frac{d\mathfrak{m}(t)}{d\lambda_{d}}(z) = m_{t}(z) \doteq \int_{\mathbb{R}^{d}} \Psi_{t}\left(x, \frac{z-x}{\sigma} \right) p_{\sigma^{2} t}(z-x) m_{0}(x) dx,\quad z\in \mathbb{R}^{d}.
\]
Thanks to \eqref{AppPsiEstimate}, the density is bounded:
\begin{align*}
	\|m_{t}\|_{\infty} &\leq C_{t}(\sigma,\|b\|_{\infty}, d) \cdot \sup_{z\in \mathbb{R}^{d}} \int_{\mathbb{R}^{d}} e^{\frac{\|b\|_{\infty}}{\sigma^2}|z-x|} p_{\sigma^{2} t}(z-x) m_{0}(x) dx \\
	&\leq C_{t}(\sigma,\|b\|_{\infty}, d) \cdot \|m_{0}\|_{\infty}\cdot  \int_{\mathbb{R}^{d}} e^{\frac{\|b\|_{\infty}}{\sigma^2}|x|} p_{\sigma^{2} t}(x) dx < \infty.
\end{align*}

It remains to prove inequality \eqref{AppBridgeInt}. Let $c\in [0,\infty)$. Since $W_{1},\ldots,W_{d}$ are independent one-dimensional Wiener processes, we have
\[
	\Mean\left[ \exp\left(c \sum_{i=1}^{d} \int_{0}^{t} \left|\int_{0}^{s} \frac{1}{t-r} dW_{i}(r) \right| ds \right) \right] = \Mean\left[ \exp\left(c \int_{0}^{t} \left|\int_{0}^{s} \frac{1}{t-r} dW_{1}(r) \right| ds \right) \right]^{d}.
\]
Let $\sgn\!: \mathbb{R}\rightarrow \{-1, 1\}$ denote the left-continuous version of the sign function, and denote by $\mathrm{Leb}_{t}$ Lebesgue measure on $[0,t]$. By \eqref{AppBridgeDifferential} and the definition of $\tilde{W}^{t,0}$, we have with $\Prb$-probability one,
\begin{equation} \label{AppBridgeSign}
	\int_{0}^{t} \left|\int_{0}^{s} \frac{1}{t-r} dW_{1}(r)\right| ds = -\int_{0}^{t} \sgn\left(\tilde{W}^{t,0}_{1}(s)\right) d\tilde{W}^{t,0}_{1}(s) + \int_{0}^{t} \sgn\left(\tilde{W}^{t,0}_{1}(s)\right) dW_{1}(s)
\end{equation}
since, for $\mathrm{Leb}_{t}\otimes \Prb$-almost all $(s,\omega)\in [0,t]\times \Omega$,
\begin{multline*}
	\left|\int_{0}^{s} \frac{1}{t-r} dW_{1}(r)\right| = \sgn\left(\int_{0}^{s} \frac{1}{t-r} dW_{1}(r) \right) \left( \int_{0}^{s} \frac{1}{t-r} dW_{1}(r)\right) \\
	= \sgn\left(\tilde{W}^{t,0}_{1}(s)\right) \left( \int_{0}^{s} \frac{1}{t-r} dW_{1}(r)\right).
\end{multline*}
Let $\tilde{L}^{t,0}(0)$ be the local time at the origin of the continuous semimartingale $\tilde{W}^{t,0}_{1}$ according to Theorem~3.7.1 in \cite[p.\,218]{karatzasshreve91}. In particular, $s\mapsto \tilde{L}^{t,0}_{s}(0)$ is a continuous non-decreasing non-negative process with $\tilde{L}^{t,0}_{0}(0) = 0$ such that, for every $s\in [0,t]$,
\[
	\lim_{\epsilon\searrow 0} \frac{1}{4\epsilon} \int_{0}^{s} \mathbf{1}_{[-\epsilon,\epsilon]}\left( \tilde{W}^{t,0}_{1}(r) \right)dr = \tilde{L}^{t,0}_{s}(0) \quad \Prb\text{-almost surely};
\]
cf.\ Theorem~3.7.1(iii) and Problem~3.7.6 in \cite{karatzasshreve91}. By the It{\^o}-Tanaka-Meyer formula applied to the absolute value function (see Eq.~(3.7.9) in 
\cite[p.\,220]{karatzasshreve91}), we have
\[
	-\int_{0}^{t} \sgn\left(\tilde{W}^{t,0}_{1}(s)\right) d\tilde{W}^{t,0}_{1}(s) = |\tilde{W}^{t,0}(0)| - |\tilde{W}^{t,0}(t)| + 2\tilde{L}^{t,0}_{t}(0) = 2\tilde{L}^{t,0}_{t}(0).
\]
This, together with \eqref{AppBridgeSign}, yields
\begin{multline*}
	\Mean\left[ \exp\left(c \sum_{i=1}^{d} \int_{0}^{t} \left|\int_{0}^{s} \frac{1}{t-r} dW_{i}(r) \right| ds \right) \right] \\
	= \Mean\left[ \exp\left( c\left( 2\tilde{L}^{t,0}_{t}(0) + \int_{0}^{t} \sgn\left(\tilde{W}^{t,0}_{1}(s)\right) dW_{1}(s) \right) \right) \right]^{d}.
\end{multline*}
Using again the Cauchy-Schwarz inequality and the (super-)martingale property of the stochastic exponential of a martingale, we find that
\begin{align*}
	& \Mean\left[ \exp\left(c \sum_{i=1}^{d} \int_{0}^{t} \left|\int_{0}^{s} \frac{1}{t-r} dW_{i}(r) \right| ds \right) \right] \\
	&\leq \Mean\left[ \exp\left( 4c\tilde{L}^{t,0}_{t}(0)\right) \right]^{d/2} \cdot \Mean\left[ \exp\left( 2c \int_{0}^{t} \sgn\left(\tilde{W}^{t,0}_{1}(s)\right) dW_{1}(s) - \frac{4c^2}{2}t + \frac{4c^2}{2}t \right) \right]^{d/2} \\
	&\leq \Mean\left[ \exp\left( 4c\tilde{L}^{t,0}_{t}(0)\right) \right]^{d/2} \cdot e^{c^{2}t\cdot d}.
\end{align*}
The distribution of $2\tilde{L}^{t,0}_{t}(0)$ is known explicitly. Let $(L_{s}(0))_{s\geq 0}$ be the local time at the origin of the one-dimensional Wiener process $W_{1}$. Then, as a consequence of \eqref{AppBMBridgeKernel}, we have that the distribution of $2\tilde{L}^{t,0}_{t}(0)$ coincides with the conditional distribution of $2L_{t}(0)$ given $W_{1}(t) = 0$. The joint distribution of $W_{1}(t)$ and $2L_{t}(0)$ is known to be absolutely continuous with respect to two-dimensional Lebesgue measure with density given by
\[
	\frac{\Prb\circ (W_{1}(t),2L_{t}(0))^{-1}}{d\lambda_{2}}(w,l) = \underbrace{ \mathbf{1}_{(0,\infty)}(l)\cdot \frac{l+|w|}{\sqrt{2\pi t^{3}}} \exp\left( -\frac{(l+|w|)^{2}}{2t} \right) }_{\doteq \phi(w,l)},\quad w,l\in \mathbb{R},
\]
see, for instance, Problem~6.3.4 in \cite[p.\,420]{karatzasshreve91}. Conditioning on $W_{1}(t) = 0$, we obtain
\[
	\frac{\Prb\circ (2\tilde{L}^{t,0}_{t}(0))^{-1}}{d\lambda_{1}}(l) = \frac{\phi(0,l)}{\int \phi(0,r)dr} = \mathbf{1}_{(0,\infty)}(l)\cdot \frac{l}{t} \exp\left( -\frac{l^{2}}{2t} \right),\quad l\in \mathbb{R}.
\]
We therefore have
\begin{multline*}
	\Mean\left[ \exp\left( 4c\tilde{L}^{t,0}_{t}(0)\right) \right] = \int_{0}^{\infty} \exp\left(2c\cdot l\right) \frac{l}{t} \exp\left( -\frac{l^{2}}{2t} \right) dl = 1 + 2c \int_{0}^{\infty} \exp\left(2c\cdot l - \frac{l^{2}}{2t} \right) dl \\
	= 1 + 2c\cdot e^{2c^{2} t} \int_{0}^{\infty} \exp\left(-\frac{(l - 2t c)^{2}}{2t} \right) dl \leq 1 + 2c\sqrt{2\pi t}\cdot e^{2c^{2} t}.
\end{multline*}
It follows that
\[
	\Mean\left[ \exp\left(c \sum_{i=1}^{d} \int_{0}^{t} \left|\int_{0}^{s} \frac{1}{t-r} dW_{i}(r) \right| ds \right) \right] \leq \left(1 + 2c\sqrt{2\pi t}\cdot e^{2c^{2} t} \right)^{d/2} \cdot e^{c^{2}t\cdot d},
\]
which proves \eqref{AppBridgeInt}.

\end{proof}

\end{appendix}

\subsubsection*{\bf Acknowledgements}
The first author presented the results on multiple solutions of Sections \ref{section pde} and \ref{two populations} at the INdAM workshop ``PDE models for multi-agent phenomena" in Rome on December 2nd, 2016. He received useful comments from Pierre Cardaliaguet and Diogo Gomes, in particular the suggestion of looking for results on short-time uniqueness in Lions' lectures. The first author is also grateful to Roberto Gianni for information about parabolic estimates and to Sara Farinelli for her careful critical reading of the first manuscript. Finally, we thank two anonymous referees for their insightful remarks.


\end{document}